\theoremstyle{plain}
\newtheorem{theorem}{Theorem}[section]
\newtheorem{lemma}[theorem]{Lemma}
\newtheorem{proposition}[theorem]{Proposition}
\newtheorem{corollary}[theorem]{Corollary}
\theoremstyle{definition}
\newtheorem{definition}{Definition}[section]
\newtheorem{tam}{Tame Assumption}[section]
\theoremstyle{remark}
\newtheorem{remark}{Remark}
\newtheorem*{notation}{Notation}
\numberwithin{equation}{section}
\newcommand{\Z}{\mathbb{Z}}
\newcommand{\C}{\mathbb{C}}
\newcommand{\N}{\mathbb{N}}
\DeclareMathOperator{\Ker}{Ker}
\DeclareMathOperator{\Spec}{Spec}
\DeclareMathOperator{\Proj}{Proj}
\DeclareMathOperator{\Hom}{Hom}
\DeclareMathOperator{\Hhom}{\mathit{Hom}}
\DeclareMathOperator{\Div}{Div}
\DeclareMathOperator{\Aut}{Aut}
\DeclareMathOperator{\Sym}{Sym}
\DeclareMathOperator{\Br}{Br}
\DeclareMathOperator{\val}{val}
\DeclareMathOperator{\Res}{Res}
\DeclareMathOperator{\Sch}{Sch}
\DeclareMathOperator{\SchS}{Sch_{S}}
\DeclareMathOperator{\Diff}{Diff}
\DeclareMathOperator{\Ri}{R}
\DeclareMathOperator{\Pic}{Pic}
\DeclareMathOperator{\Hi}{H}
\newcommand{\Card}[1]{\mathrm{Card}\left(#1\right)} 
\newcommand{\id}{\mathrm{id}}
\newcommand{\bA}{\mathbb{A}}
\newcommand{\fB}{\mathfrak{B}}
\newcommand{\cD}{\mathcal{D}}%
\newcommand{\sE}{\mathscr{E}}%
\newcommand{\bG}{\mathbb{G}}%
\newcommand{\fH}{\mathfrak{H}}
\newcommand{\cH}{\mathcal{H}}
\newcommand{\cHh}{\smash{\overline{\mathcal{H}}}}
\newcommand{\fHh}{\smash{\overline{\mathfrak{H}}}}%
\newcommand{\bbk}{\Bbbk}
\newcommand{\sL}{\mathscr{L}}%
\newcommand{\fM}{\mathfrak{M}}
\newcommand{\fMm}{\smash{\overline{\mathfrak{M}}}}%
\newcommand{\sO}{\mathscr{O}}%
\newcommand{\bP}{\mathbb{P}}
\newcommand{\sP}{\mathscr{P}}
\newcommand{\fP}{\mathfrak{P}}%
\newcommand{\fZ}{\mathfrak{Z}}%
\newcommand{\vk}{\smash{\vec{\boldsymbol{k}}}}%
\newcommand{\mr}[1]{\mathrm{#1}}
\newcommand{\mc}[1]{\mathcal{#1}}
\newcommand{\mf}[1]{\mathfrak{#1}}
\newcommand{\mbf}[1]{\mathbf{#1}}
\newcommand{\mbb}[1]{\mathbb{#1}}
\newcommand{\ms}[1]{\mathscr{#1}}
\newcommand{\bs}[1]{\boldsymbol{#1}}
\begin{document}

\title{Comments on the ELSV compactification of Hurwitz stacks}
\author{Bashar Dudin}

\address{Universit\'e du Maine, Département de Mathématiques, Avenue Olivier Messaien, BP 535, 72085 Le Mans cedex 9, France.}
\email{bashar.dudin@univ-lemans.fr}
\date{\today}

\maketitle
\begin{abstract}
  We revisit Ekedahl, Lando, Shapiro and Vainshtein's compactification
  of the stack of simply ramified covers of the projective line except for a fixed ramification profile above infinity. In particular we draw a connection with the Harris and Mumford stack of admissible covers showing that the boundary of the ELSV compactification appears as a contraction of the boundary of the stack of admissible covers. This lays the needed foundations for a combinatoral interpretation of boundary points of the ELSV compactification. 
\end{abstract}

\setcounter{tocdepth}{1}
\tableofcontents

\section{Introduction}

Fix integers $(g, n)\in \N\times \N^*$ distinct from $(0,1)$ and $(0,2)$. A genus $g$ cover of profile $\vk = (k_1, \ldots, k_n) \in (\N^*)^n$ is a ramified cover $\phi : X \rightarrow Y$ from a genus $g$ smooth $n$-marked curve $(X, \underline{p}= p_1, \ldots, p_n)$ on a genus $0$ $1$-marked smooth curve $(Y, q_\infty)$, simply branched away from $q_\infty$ and which sends $p_i$ on $q_\infty$ (and only the $p_i$s) with multiplicity $k_i$. The Hurwitz number $\mu_{g, \vk}$ is the sum over $g$ covers $\phi$ of profile $\vk$ having $r$ fixed branch points each weighted by $1/\Aut_{0}(\phi)$.  The automorphism group $\Aut_{0}(\phi)$ is the group of automorphisms of the domain curve that fix the marked points and commute to $\phi$. In case the base field is $\C$ the celebrated ELSV formula \cite[1.1]{ELSV}  
\begin{equation}
  \label{eqELSV}
  \tag{1}
  \mu_{g, \vk}= r!\prod_{i=1}^n \frac{k_i^{k_i}}{k_i!} \int_{\fMm_{g, n}} \frac{c\left(\mbb{E}_{g, n}^{\vee}\right)}{(1-k_1\psi_1)\cdots (1-k_n\psi_n)}
\end{equation}
relates $\mu_{g, \vk}$ to an intersection number on the stack of stable genus $g$ $n$-marked curves $\fMm_{g, n}$ (\cite{DMMg}). It was announced in \cite{ELSV0} with a sign error, corrected and proved in \cite{BranchFP} in the case $\vk=(1,\ldots,1)$ and finally proved in the general case in \cite{ELSV}. Techniques in \cite{BranchFP} and later on \cite{GrabVak} depend on a fine understanding of Gromov-Witten theory of $\bP^1$, those in \cite{ELSV} build up a geometry reflecting the right-hand side of \ref{eqELSV}. This geometry is the main interest of this paper. It involves constructing compactifications of stacks of covers of $\bP^1$ that have a projective cone structure over $\fMm_{g, n}$. The question we're interested in is how are these ``ELSV'' compactifications related to more standard compactifications of covers of $\bP^1$ such as \cite{HurHM}, \cite{HurMochi}, \cite{HurACV} or \cite{HurBR}. The answer to this question is the content of theorem \ref{thm:finale}. In order to give a smooth functorial treatment of the subject we had to give a satisfactory algebro-geometric understanding of the cone of twisted polar parts \cite[3.4]{ELSV}. In sections \ref{subsec:Toroidal} and \ref{subsec:polarcurves} we give two constructions of this cone, they are isomorphic if the characteristic of the base field is big enough. The present work makes clear the fact that the ELSV formula is still valid as long as the characteristic of the base field doesn't divide any of the $k_i$s and the number of branch points away from $q_\infty$. Lastly we mention that theorem \ref{thm:finale} is the starting point of the combinatorial description of boundary points of the ELSV compactification announced in \cite{BashCRAS}. It is the subject of a forthcoming paper by the author.  

Throughout the paper $\bbk$ is an algebraically closed field of characteristic $\bs{p} \geq 0$ and schemes are algebraic schemes over $\bbk$. We denote $\cH_{g, \vk}$ the stack of $g$ covers of profile $\vk$. An automorphism of a point $\phi : X \rightarrow Y$ in $\cH_{g, \vk}$ is a couple of isomorphisms $(\alpha, \beta)$ of the domain and target, fixing the marked points and such that $\beta\phi = \phi\alpha$, details can be found in \cite{HurHM}, \cite{EmsalemHur} and \cite{HurBR}. More generally, following \cite{HurBR}, if given a set $\{\bs{\epsilon}_j\}_{j =1}^{\bar{r}}$ of partition of $d = \sum_{i=1}k_i$ one can define the stack $\cH_{g, \vk}^{\bs{\epsilon}}$ of $g$ covers that have profile $\vk$ above the only marked point of the target and ramification indices given by the partitions $\bs{\epsilon}_j$ over each branch point. Denote $\cH_{g, \vk}^{\bullet}$ the finite disjoint union of all such $\cH_{g, \vk}^{\bs{\epsilon}}$. 

The starting point of the ELSV compactification comes from the following understanding of the moduli stack of $g$ covers of profile $\vk$: denote $\mbf{P}_{g, \vk}$ the space of couples $((X, \underline{p}), \underline{\varrho})$ where $(X, \underline{p})$ is a smooth $n$-marked genus $g$ curve and $\underline{\varrho} = (\varrho_1, \ldots, \varrho_n)$ is a global section of $\sO_X(\sum_ik_ip_i)/\sO_X$ called a polar part. Locally at $p_i$ we ask for $\varrho_i$ to be of the form 
\begin{displaymath}
  \varrho_i = \frac{a_{k_i}}{t_i^{k_i}} + \frac{a_{k_i-1}}{t_i^{k_i-1}} + \cdots + \frac{a_1}{t_i} \quad \textrm{where $a_{k_i}\neq 0$}. 
\end{displaymath}
We have a natural $\bG_m$-action on the fibers of $\mbf{P}_{g, \vk}$ over $\fM_{g, n}$. The residue theorem says the zero locus of the residue linear form $\kappa \mapsto \sum_i\Res_{p_i}(\kappa\varrho_i)$ for $\kappa \in \Hi^0(X, \Omega_X)$ is precisely the set of meromorphic functions (up to the addition of a constant) whose behavior in the neighborhood of $p_i$ is given by $\varrho_i$. The quotient of this locus by the $\bG_m$-action coming from $\mbf{P}_{g, \vk}$ is $\cH_{g, \vk}$. The ELSV approach is to extend $\mbf{P}_{g, \vk}$ into a cone over the proper stack of stable curves $\fMm_{g, n}$ for which the residue theorem still makes sense. Notice first that one can still make sense out of polar parts at the marked points of any prestable genus $g$ $n$-marked curve (see \cite{BehManin}), in particular this is as well the case for stable curves. In \cite{ELSV} the authors construct the desired extension of $\mbf{P}_{g, \vk}$ locally over $\fMm_{g, n}$. Take a stable $n$-marked curve $(X, \underline{p})$. If given a local equation $t_i$ at the neighborhood of $p_i$ a polar part at $p_i$ is given by a $k_i$-tuple $(a_{k_i}, a_{k_i-1}, \ldots, a_1)$ with $a_{k_i}\neq 0$. In loc. cit. the authors choose to describe a polar part using coordinates $(u_i, c_{1}, \ldots, c_{k_i-1})$ with $u_i \neq 0$ corresponding to the polar part 
\begin{displaymath}
  \left(\frac{u_i}{t_i}\right)^{k_i} + c_{1}\left(\frac{u_i}{t_i}\right)^{k_i-1} + \cdots + c_{k_i-1}\left(\frac{u_i}{t_i}\right).
\end{displaymath}
For this to make sense one has to mod out the coordinates by the action of $\bs{\mu}_{k_i}$ having weights $(1, 1,2,\ldots, k_i)$. Extending $\mbf{P}_{g, \vk}$ is to allow $u_i=0$. The resulting extension of $\mbf{P}_{g, \vk}$ to $\fMm_{g, n}$ which shall still be written $\mbf{P}_{g, \vk}$ is locally over $\fMm_{g, n}$ isomorphic to 
\begin{displaymath}
\prod_{i=1}^n [\bA^{k_i}/\bs{\mu}_{k_i}].  
\end{displaymath}
This is the model we keep in mind through sections \ref{subsec:Toroidal} and \ref{subsec:polarcurves} while giving more natural and functorial constructions of $\mbf{P}_{g, \vk}$. 

\subsection*{Conventions on stacks}

We use the conventions of \cite{Kainc}. In particular, an algebraic stack is an algebraic stack over $\bbk$ in the sense of \cite{Laumon} that is quasi-separated and locally of finite type on $\bbk$. A Deligne--Mumford stack is an algebraic stack in the sense of definition \cite[4.6]{DMMg}, i.e. an algebraic stack whose diagonal is unramified. By a morphism of stacks we mean a $1$-morphism. A Deligne--Mumford stack comes with the Grothendieck topology of the small \'etale site. An algebraic stack that is not Deligne--Mumford has the faithfully flat of finite presentation topology. The action of a group scheme on a stack and the resulting quotient is to be understood in the sense of \cite{MRomagnyG}. An $\bA^1$-action on a stack in the sense of \cite{Kainc} is called an $\bA^1$-structure in order to avoid confusion with the notion of a group action.

\subsection*{Acknowledgements}

The present work is part of the author thesis and he would like to thank his advisor J.~Bertin for his guidance and support.

\section{Toroidal construction of twisted polar parts}
\label{subsec:Toroidal}

We give an alternative description of the stack of twisted polar parts $\bs{P}_{g, \vk}$ making use of toroidal techniques due to \cite{KKMS}. This approach is of local nature on the base. We start by focusing on the $1$-marked case.

\subsection{The group of formal diffeomorphisms of a marked point}
Let $(\pi: \mc{X} \rightarrow S, D)$ be a prestable $1$-marked curve over an affine local scheme $S=\Spec(\bs{A})$. Let $p$ be a point in the support of $D$ having image $s=\pi(p)$ and $t$ a local equation of $D$ at $p$. Since $\pi$ is smooth at $p$ the formal completion of $\sO_p$ relative to the maximal ideal is isomorphic to $\bs{A}\llbracket t\rrbracket$. The ring $\mc{P}$ of Laurent tails at $p$ can be written  
\begin{displaymath}
  \sP = \bs{A}\llbracket t \rrbracket[t^{-1}]/\bs{A}\llbracket t \rrbracket.
\end{displaymath}
An element $\varrho \in \sP$ can be uniquely written as
\begin{displaymath}
  \varrho = \sum_{0 < \ell \leq k} \frac{a_\ell}{t^{\ell}}  \quad \text{with $k >0$ and $a_k \neq 0$}
 \end{displaymath}
The integer $k$ is the order of $\varrho$. The set of Laurent tails at $p$ of order $k$ is written $\sP_{k}$, it corresponds to the set of polar parts at $p$ of order $k$.

Define $\Diff_{\bs{A}}$ as the set of $\bs{A}$-automorphisms of $\bs{A}\llbracket t \rrbracket$ fixing the section $t=0$. It is composed of elements $f(t) = \alpha_0t^1 + \alpha_2t^2 + \cdots$ such that $\alpha_0 \in \bs{A}^*$ and has a group structure given by $f(t)h(t) = h(f(t))$. One has a left action of $\Diff_{\bs{A}}$ on $\sP$ defined by $f(t) \cdot \varrho(t)  = \varrho(f(t))_<$ where the right-hand side denotes the Laurent tail of $\varrho(f(t))$. It is clear that $\varrho(t)$ is invariant under the action of an element of the form $f(t) = t + \alpha_kt^{k+1} + \alpha_{k+1}t^{k+2} + \cdots$. Such elements define a normal subgroup $\Diff_{\bs{A}}(k)$ of $\Diff_{\bs{A}}$. The elements of the quotient group $\Diff_{\bs{A}, k} = \Diff_{\bs{A}}/\Diff_{\bs{A}}(k)$ are of the form $f(t) = \alpha_0t^1 + \alpha_1t^2 + \cdots + \alpha_{k-1}t^k$. 
\begin{lemma}
  $\Diff_{\bs{A}, k}$ acts transitively on the set of Laurent tails of order $k$ with isotropy group $\bs{\mu}_{k}$. 
\end{lemma}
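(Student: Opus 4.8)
The plan is to reduce both assertions to the standard tail $\varrho_0 = t^{-k}$ and read them off from the triangular shape of the action. First I would check that the action is well defined on $\sP_k$: for $f(t) = \alpha_0 t + \cdots + \alpha_{k-1}t^k$ with $\alpha_0 \in \bs{A}^*$ and $\varrho = \sum_{\ell \le k} a_\ell t^{-\ell}$ of order $k$, the leading coefficient of the tail $\varrho(f(t))_<$ is $a_k\alpha_0^{-k}$, a unit multiple of $a_k$, so $f\cdot\varrho$ is again of order $k$ and $\Diff_{\bs{A},k}$ really acts on $\sP_k$.

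For transitivity I would compute the orbit of $\varrho_0$. Writing $f(t) = \alpha_0 t(1 + w)$ with $w = \sum_{i\ge 1}(\alpha_i/\alpha_0)t^i$ gives $\varrho_0(f(t)) = \alpha_0^{-k}t^{-k}(1+w)^{-k}$, whence $f\cdot\varrho_0 = \alpha_0^{-k}\sum_{\ell=1}^{k}\gamma_{k-\ell}\,t^{-\ell}$ with $\gamma_0 = 1$ and $\gamma_j$ the coefficient of $t^j$ in $(1+w)^{-k}$. Since $\gamma_j = -k(\alpha_j/\alpha_0) + P_j(\alpha_1,\dots,\alpha_{j-1})$ for a universal polynomial $P_j$, the assignment $(\alpha_0,\dots,\alpha_{k-1}) \mapsto (\text{coefficients of } f\cdot\varrho_0)$ is triangular: the top coefficient is $\alpha_0^{-k}$, and once $\alpha_0$ is fixed the $\ell$-th coefficient depends on $\alpha_{k-\ell}$ through the linear term $-k\alpha_0^{-k-1}\alpha_{k-\ell}$. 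Given any target $\sum b_\ell t^{-\ell}$ of order $k$ I would first solve $\alpha_0^{-k} = b_k$ and then solve for $\alpha_1,\dots,\alpha_{k-1}$ one coefficient at a time, proving that every order-$k$ tail lies in the orbit of $\varrho_0$.

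The isotropy group follows from the same recursion applied to the equation $f\cdot\varrho_0 = \varrho_0$, which amounts to $(t/f(t))^k \equiv 1 \pmod{t^k}$. Expanding $f(t) = \alpha_0 t(1+w)$ forces $\alpha_0^k = 1$ from the top coefficient and then, inductively, $\alpha_1 = \cdots = \alpha_{k-1} = 0$, so the stabilizer of $\varrho_0$ is exactly the group of homotheties $f(t) = \zeta t$ with $\zeta^k = 1$, i.e. $\bs{\mu}_k$. Transitivity then makes every isotropy group conjugate to this one and yields the identification $\sP_k \simeq \Diff_{\bs{A},k}/\bs{\mu}_k$; relatively this says the orbit map $\Diff_{\bs{A},k} \to \sP_k$, $f \mapsto f\cdot\varrho_0$, is a $\bs{\mu}_k$-torsor onto its image.

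The main obstacle, and the place where the arithmetic really enters, is the invertibility needed to run both recursions: the diagonal terms $-k\alpha_0^{-k-1}\alpha_j$ are unimodular only when $k$ is invertible in $\bs{A}$, that is when $\bs{p} \nmid k$. If $\bs{p} \mid k$ the coefficient of $t^{-(k-1)}$ in $f\cdot\varrho_0$ vanishes identically and the scheme-theoretic stabilizer acquires infinitesimal directions, so that both transitivity and the clean isotropy $\bs{\mu}_k$ break down; the lemma must therefore be read under $\bs{p} \nmid k$, in accordance with the paper's running assumption that $\bs{p}$ divides none of the $k_i$. A minor secondary point is the existence of the $k$-th root $\alpha_0$ of the unit $b_k^{-1}$, which holds fibrewise over the algebraically closed field $\bbk$ (its residual $\bs{\mu}_k$-ambiguity being precisely the isotropy) and otherwise dictates the torsor, rather than strictly pointwise, formulation.
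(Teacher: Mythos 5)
Your proof is correct and follows essentially the same route as the paper's: the paper's one-line argument that any order-$k$ tail is $t^{-k}\varphi(t)_<$ for an invertible truncated series is exactly your reduction to the standard tail $\varrho_0=t^{-k}$, with the $k$-th root of the leading unit and the triangular system in $\alpha_1,\dots,\alpha_{k-1}$ carried out explicitly. Your observation that both recursions require $k$ invertible in $\bs{A}$ (so $\bs{p}\nmid k$) makes precise a hypothesis the paper leaves implicit here but imposes later in its tame assumptions.
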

\begin{proof}
  Transitivity results from the fact any such Laurent tail can be written $\varrho(t) = t^{-k}\varphi(t)_<$ for a truncated invertible formal series $\varphi(t)_<$. The stabilizer of $t^{-k}$ is clearly $\bs{\mu}_{\vk}$. 
\end{proof}
To look for affine embeddings of $\sP_k$ we study affine equivariant embeddings of $\Diff_{\bs{A}, k}$.  

\subsection{The group structure of $\Diff_{k}$}
Recall the algebraic group $\bG_{m, \bs{A}}$ is the spectrum of the graded $\bs{A}$-algebra \begin{displaymath}
\bs{A}[M] = \bigoplus_{\gamma \in \Z} \bs{A}\chi^\gamma. 
\end{displaymath} 
The set of variables $M$ is  the group of characters of $\bG_{m, \bs{A}}$. Denote $N$ the group of $1$-parameter subgroups of $\bG_{m, \bs{A}}$. 

In the sequel we drop the index $\bs{A}$ relative to the base scheme we work on. We thus write $\bG_m$ instead of $\bG_{m, \bs{A}}$ and $\Diff_{k}$ instead of $\Diff_{\bs{A}, k}$. 

The group $\Diff(2)$ defines a normal subgroup of $\Diff_{k}$. The corresponding quotient group is equal to $\Diff_{1}$ which is the automorphism group of $\widehat{\sO}_p/\widehat{\sO}_p(-p)$ and is thus equal to $\bG_m$. This gives an extension of groups
\begin{equation}
\label{suite:unipotent}
  \begin{tikzpicture}[>=latex, baseline=(current  bounding  box.center)]
    \matrix (m) 
    [matrix of math nodes, row sep=1.5em, column sep=2.51em, text
    height=1ex, text depth=0.25ex]  
    { 1 & \mc{U}_k & \Diff_{k}  & \bG_m & 1. \\}; 
    \path[->,font=\scriptsize]  
    (m-1-1) edge  (m-1-2)
    (m-1-2) edge (m-1-3)
    (m-1-3) edge (m-1-4)
    (m-1-4) edge (m-1-5);
  \end{tikzpicture}      
\end{equation}
Given a local coordinate $t$ at $p$ an element $f(t)  \in \mc{U}_k$ is of the form $f(t) = t + \alpha_1t^2 + \cdots + \alpha_{k-1}t^k$ and hence $\mc{U}_k$ is unipotent. The coordinate choice $t$ gives sections of $\Diff_{k+1} \rightarrow \Diff_{k}$ for $k > 0$ sending $f(t) = \alpha_0t^1+ \cdots + \alpha_{k-1}t^k$ on the corresponding element in $\Diff_{k+1}$ with $\alpha_k=0$. We get an isomorphism of $\mc{U}_{k+1}/\mc{U}_k = \Diff_{k+1}/\Diff_{k}$ on $\bG_a$ showing $\Diff_{k}$ is a solvable group of unipotent radical $\mc{U}_{k}$, and a splitting of \ref{suite:unipotent} sending $\lambda \in \bG_m$ on $\theta_t(\lambda) = \lambda t$. The image of $\theta_t$ in $\Diff_{k}$ is a maximal torus and each maximal torus in $\Diff_k$ appears this way. 

For $\Diff_k = \bG_m\ltimes \mc{U}_k$ the action of $\bG_m$ on $f(t) = t + \alpha_1t^2 + \cdots + \alpha_{k-1}t^k \in \mc{U}_k$ is written
\begin{displaymath}
  \theta_t(\lambda)^{-1}f(t) \theta_t(\lambda) = t + \chi^{-1}(\lambda) \alpha_1t^2 + \chi^{-2}(\lambda) \alpha_2t^3 + \cdots + \chi^{-(k-1)}(\lambda)t^k.
\end{displaymath}
The weights of the $\bG_m$-action on $\mc{U}_k$ are $\chi^{-1}$, $\chi^{-2}$, $\ldots$, $\chi^{-(k-1)}$. For $\Diff_k = \mc{U}_k\rtimes \bG_m$ one gets the weights $\chi^{1}$, $\chi^{2}$, $\ldots$, $\chi^{(k-1)}$.  More generally if $u_1$, $u_2$, $\ldots$, $u_{k}$ are coordinates of $\mc{U}_k$ such that $u_\ell$ is non-zero in $\bG_a = \mc{U}_{\ell}/\mc{U}_{\ell-1}$ the group law on $\bG_m\ltimes \mc{U}_k$ is given by the co-multiplication $\mu^*$ on $\sO_s[M]\otimes \sO_s[u_1, \ldots, u_k]$ such that 
\begin{equation}
  \label{mucolaw}
  \mu^*\mid \sO_s[M]   =  1\otimes 1\quad \textrm{and} \quad \mu^*(u_\ell) = u_\ell\otimes 1 + \chi^{-\ell} \otimes u_\ell + (\star\star)_\ell
\end{equation}
where $(\star\star)_\ell$ is a polynomial in $u_m\otimes 1$, $1\otimes u_m$ for $1\leq m <\ell$ and $\chi^{-m}$ for $1 \leq m \leq \ell$. This is a special case of \cite[4.2 (3)]{KKMS} for a solvable group. If we write $\Diff_k = \mc{U}_k \rtimes \bG_m$ we have to replace $\chi^{-\ell}$ by $\chi^{\ell}$ in the group law expressions of \ref{mucolaw}.

\subsection{Toroidal embeddings of $\Diff_{k}$}
\label{subsec:Psivartheta}

There are two toric embeddings of $\bG_m$ corresponding to the subcones of $N$ given by $\sigma_+ = [0, +\infty[$ and $\sigma_-=]-\infty, 0]$. If $S_{\sigma} = \{m \in M ; m\mid \sigma \geq 0\}$ these embeddings are respectively given by the spectra of
\begin{displaymath}
  \bs{A}[S_{\sigma_+}] = \bigoplus_{\gamma \in \sigma_+} \bs{A}\chi^{\gamma} \quad \text{and} \quad \bs{A}[S_{\sigma_-}] = \bigoplus_{\gamma \in \sigma_-} \bs{A}\chi^{\gamma}. 
\end{displaymath} 
Following techniques of \cite[4]{KKMS} one can give equivariant embeddings of $\Diff_k$ in affine schemes. Let $\sigma$ be either $\sigma_+$ or $\sigma_-$ and write $\bG_m \hookrightarrow \bA_{\sigma} =\bA^1$ for the attached toric embedding. One can get an affine equivariant embedding of $\Diff_k$ by looking at $C_{\sigma} = \bA_{\sigma} \times^{\bG_m} \Diff_k$. By definition this is the quotient of $\bA_{\sigma} \times (\bG_m\ltimes \mc{U}_k)$ by the relation $[\lambda\cdot x, f(t)] = [x, \theta_t(\lambda)f(t)]$. As an algebraic variety $C_{\sigma} = \bA_{\sigma}\times \mc{U}_k$. It is clear that the right action of $\Diff_k$ on itself extends to $C_{\sigma}$. The left action extends to $C_{\sigma}$ if and only if $\sigma = \sigma_-$ : the affine embedding $\Diff_k \subset C_{\sigma}$ is given by 
\begin{displaymath}
  \bs{A}[S_{\sigma}]\otimes \sO_s[u_1, \ldots, u_k] \subset \bs{A}[M]\otimes \sO_s[u_1, \ldots, u_k],
\end{displaymath} 
looking at the group law \ref{mucolaw} one sees the left action of $\Diff_k$ on itself extends if and only if $S_{\sigma} \ni \chi^{-\ell}$ and thus $\sigma = \sigma_-$. Proceeding the same way (exchanging rights and lefts) with $E_{\sigma} = \Diff_k\times^{\bG_m} \bA_{\sigma}$ we get two affine \emph{bi-equivariant} embeddings $C_{\sigma_-}$ and $E_{\sigma_+}$ (for short $C$ and $E$) of $\Diff_k$. 

These constructions depend on the coordinate $t$. Let's focus on the case of the embedding $C$, the $E$ case is identical. Any other coordinate at $p$ is written $\vartheta t$ for $\vartheta \in \Diff_k$. If $\mbb{T}$ is the maximal torus in $\Diff_k$ defined by $t$ then $\vartheta\mbb{T}\vartheta^{-1}$ is the one defined by $\vartheta t$. Let $C_{t}$ and $C_{\vartheta t}$ be the affine bi-equivariant embeddings of $\Diff_k$ defined by $\sigma_-$. In $C_{\vartheta t}$ we have the relation $[\lambda^{-1}x, f(t)] = [x, \vartheta\lambda\vartheta^{-1}f(t)]$. By \cite[4 (iv)]{KKMS} page $183$, there is a unique bi-equivariant isomorphism $\Psi_{\vartheta} : C_{t} \simeq C_{\vartheta t}$ commuting to the embeddings of $\Diff_k$ in $C_{t}$ and $C_{\vartheta t}$. The isomorphism $\Psi_{\vartheta}$ is the composition of the left action $L_{\vartheta^{-1}} : C_{t} \rightarrow C_{t}$ with the isomorphism $\delta_{\vartheta} : C_{t} \rightarrow C_{\vartheta t}$ given by $[x, f(t)] \mapsto [x, \vartheta f(t)]$. Notice $\Psi_{\vartheta}$ is the identity on $\Diff_k$.  

\subsection{Back to twisted polar parts} Let $(\pi :\mc{X} \rightarrow S, D)$ be a stable genus $g$ marked curve over a scheme $S$. Following the previous procedure one gets two affine cones locally on the base. To glue these into a stack over $\fMm_{g, n}$ one needs to specify transition functions on the intersection of two open affine subsets of the base. Let $U_{12}$ be the intersection of two such affine subsets $U_1$ and $U_2$ and let $t_1$ and $t_2$ be coordinates at $D$ respectively over $U_1$ and $U_2$. Looking at $t_1$ and $t_2$ as elements of $\Diff_k$ there exists $\vartheta \in \Diff_k$ such that $t_2 = \vartheta t_1$. The transition function from $C_1$ on $C_2$ should correspond to the polar part coordinate change  
\begin{displaymath}
  \sum_{0<\ell \leq k} \frac{a_\ell}{t_1^{\ell}}  = \sum_{0 < \ell \leq k} \frac{a_\ell}{(\vartheta^{-1}t_2)^{\ell}}.
\end{displaymath}
This is exactly the right multiplication $R_{\vartheta^{-1}}$ on $\Diff_k$. This isomorphism extends on $C_1$ and $C_2$ by $G_{\vartheta} = \Psi_{\vartheta}R_{\vartheta^{-1}}$. To keep track of the polar parts we need to mod out by the left action of $\bs{\mu}_k \subset \bG_m$ on both $C_1$ and $C_2$. Fortunately, $G_{\vartheta}$ is $\bs{\mu}_{k}$-equivariant and this local construction together with the transition functions $G_{-}$ give a stack over $S$. This procedure still makes sense in the case of $n$-marked points and gives a locally affine stack $\mc{C}_{\vk}$ over $\fMm_{g, n}$. 
\begin{definition}
  The stack of twisted polar parts over $\fMm_{g, n}$ is the locally affine cone $\mc{C}_{\vk}$ defined previously. 
\end{definition}
Using the same transition functions we can glue the local cone $E$ into a stack $\mc{E}$ over $\fMm_{g, n}$. However, $\mc{E}$ doesn't correspond to the stack $\bs{P}_{g, \vk}$ of twisted polar parts introduced in \cite{ELSV}. The reason is simple, locally at the $i$-th marked point the right action of $\bG_m$ on $C = \bA_{\sigma_-} \times \mc{U}_k$ has weights $\chi^{-1}, \chi^{-1}, \chi^{-2}, \ldots, \chi^{-(k_i-1)}$ which is (up to isomorphism) the expected behavior for the right action of $\bG_m$ on twisted polar parts at the same marked point. This is not the case for the right action of $\bG_m$ on $\mc{E}$ which has weights equal to $\chi$.

\section{Polar curves}
\label{subsec:polarcurves}

\subsection{Stable polar curves}

Let $(\pi:\mc{X} \rightarrow S, \underline{D})$ be a prestable $n$-marked curve of genus $g$. The sheaf 
\begin{displaymath}
\sO_{\mc{X}}(\sum_{i=1}^n k_iD_i)/\sO_{\mc{X}} = \bigoplus_{i=1}^n \sO_{\mc{X}}(k_iD_i)/\sO_{\mc{X}}
\end{displaymath}
has support on the scheme defined by $\underline{D}$. Base change in cohomology for finite morphisms ensures that the pushforward  
\begin{displaymath}
\sP_{\mc{X}, \vk} = \bigoplus_{i=1}^n\pi_*(\sO_{\mc{X}}(k_iD_i)/\sO_{\mc{X}}) = \bigoplus_{i=1}^n \sP_{\mc{X}, k_i} 
\end{displaymath}
 is locally free on $S$ of rank $d=\sum_{i=1}^n k_i$. Let $p$ be a point in the support of $D_i$ and $t$ a local coordinate at $p$. For $s = \pi(p)$ any section $\varrho$ of $\sP_{\mc{X}, k_i}$ can be locally written in the form
\begin{equation}
  \label{eq:locpolarpart}
  \varrho_i = \frac{a_{k_i}}{t^{k_i}} + \cdots \frac{a_1}{t} \quad \text{with $a_{\ell} \in \sO_s$}. 
\end{equation}
With the difference with other coefficients $a_{k_i}$ is of global nature. The quotient morphism 
\begin{displaymath}
\begin{tikzpicture}[baseline=(current  bounding  box.center),>=latex]
    \matrix (m) 
    [matrix of math nodes, row sep=1em, column sep=2.5em, text
    height=1ex, text depth=0.25ex]  
    { \sO_{\mc{X}}(k_iD_i)/\sO_{\mc{X}} &
      \sO_{\mc{X}}(k_iD_i)/\sO_{\mc{X}}((k_i-1)D_i) \\ };  
    \path[->,font=\scriptsize]  
    (m-1-1) edge (m-1-2);
  \end{tikzpicture}
\end{displaymath}
has kernel supported on $D_i$. Its higher pushforwards are $0$ and one gets an onto map 
\begin{displaymath}
\begin{tikzpicture}[baseline=(current  bounding  box.center),>=latex]
    \matrix (m) 
    [matrix of math nodes, row sep=1em, column sep=2.5em, text
    height=1ex, text depth=0.25ex]  
    { \sP_{\mc{X}, k_i} & \sL_i^{-k_i}
      \\ };   
    \path[->>,font=\scriptsize]  
    (m-1-1) edge node[auto] {$\vartheta_{\mc{X}, k_i}$} (m-1-2);
  \end{tikzpicture}
\end{displaymath}
on the $(-k_i)$-th tensor power of the restriction of the cotangent bundle at the $i$-th marked point. The morphisms $\vartheta_{-,k_i}$ commute to base change for each domain and target does and the quotient morphisms do. The global picture we get is that of a locally free sheaf $\sP_{g, \vk}$ given by the collection $\sP_{-, \vk}$ and morphisms $\vartheta_{k_i} : \sP_{g, k_i} \rightarrow \sL_i^{-k_i}$ over the stack of prestable marked curves $\fM_{g, n}^{p\! s}$. 
\begin{definition}
  A polar $S$-curve $((\pi : \mc{X} \rightarrow S,\underline{D}), \underline{\varrho})$ is composed of a marked prestable curve and a global section $\underline{\varrho}=(\varrho_1, \ldots, \varrho_n)$ of $\sP_{\mc{X}, \vk}$ such that $\vartheta_{\mc{X}, k_i}(\varrho_i)$ trivializes $\sL_i^{-k_i}$.  
\end{definition}
Most of the time we will abuse the notation by writing $(\pi : \mc{X} \rightarrow S, \underline{\varrho})$ the previous polar curve. The condition ``$\vartheta_{\mc{X}, k_i}(\varrho_i)$ trivializes $\sL_i^{-k_i}$'' means $a_{k_i}$ (in expression \ref{eq:locpolarpart}) is invertible in any local representation of $\varrho_i$ at $p_i$. We say $\varrho_i$ is of order $k_i$ at $p_i$.
\begin{definition}
  A morphism from a polar curve $(\pi : \mc{X} \rightarrow S, \underline{\varrho})$ on $(\psi : \mc{Y} \rightarrow T, \underline{\varepsilon})$ over $f : S \rightarrow T$ is an isomorphism of the underlying marked curves $\psi$ and $f^*\pi$ sending $\underline{\varrho}$ on $f^*\underline{\varepsilon}$.  
\end{definition}
It is straightforward and formal to show that the functor in groupoids defined by the collection of polar curves over $\Sch$ is an algebraic stack. It is not separated, we do the obvious thing : restrict ourselves to objects having finite automorphism groups.  
\begin{lemma}
  \label{lem:stability}
  Let $(X, \underline{\varrho})$ be a polar curve over $\bbk$. The following are equivalent
  \begin{enumerate}
  \item the automorphism group of $(X, \underline{\varrho})$ is finite,
  \item the sheaf $\omega_X(\sum_{i=1}^nm_ip_i)$ is ample for $m_i \geq 2$,
  \item the sheaf $\omega_X(\sum_{i=1}^n2p_i)$ is ample,
  \item each irreducible component of $X$ has at least $3$ nodal or marked points except for \emph{possible} smooth rational components having only $1$ marked point and $1$ nodal point. 
  \end{enumerate}
\end{lemma}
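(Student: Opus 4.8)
The plan is to funnel all four conditions through the single statement that $\omega_X(\sum_i 2p_i)$ is ample, and to control $\Aut(X,\underline{\varrho})$ through its Lie algebra of infinitesimal automorphisms. I would establish $(2)\Leftrightarrow(3)$ and $(3)\Leftrightarrow(4)$ first, as purely numerical facts about degrees on components, and reserve $(1)\Leftrightarrow(3)$ for last, since it is the only place where the polar part genuinely enters.

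For $(2)\Leftrightarrow(3)$ I would use that ampleness of a line bundle on a proper curve over $\bbk$ is equivalent to strict positivity of its degree on every irreducible component. As the $p_i$ are smooth points and $\sO_X(\sum_i(m_i-2)p_i)$ is effective for $m_i\geq 2$, raising the weights can only increase these degrees, so $(3)\Rightarrow(2)$, while $(2)\Rightarrow(3)$ is the specialisation $m_i=2$. For $(3)\Leftrightarrow(4)$ I would write, for a component $C$ of arithmetic genus $g_C$ carrying $n_C$ marked points and meeting the rest of $X$ in $\delta_C$ branches, $\deg_C\omega_X(\sum_i 2p_i)=2g_C-2+\delta_C+2n_C$, and read off when this is positive for every $C$. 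The only borderline case is a smooth rational $C$ with two special points: two nodes give degree $0$, whereas one node and one marked point give degree $1$. This is precisely the exception recorded in $(4)$, and conceptually it is the polar part sitting at that marked point which accounts for it.

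The substance is $(1)\Leftrightarrow(3)$, and the key computation is the action of an infinitesimal automorphism on a polar part. In a coordinate $t$ at $p_i$, a vector field fixing $p_i$ is $v=(\gamma_1 t+\gamma_2 t^2+\cdots)\partial_t$, and the leading term of its Lie derivative on $\varrho_i=\sum_{\ell}a_\ell t^{-\ell}$ is $-k_i a_{k_i}\gamma_1 t^{-k_i}$. Since $a_{k_i}\neq 0$ and $\bs{p}\nmid k_i$, invariance of $\varrho_i$ forces $\gamma_1=0$; thus every infinitesimal automorphism vanishes to order at least $2$ at each $p_i$, and the Lie algebra of $\Aut(X,\underline{\varrho})$ sits inside $\Hi^0(X,\mathcal{T}_X(-\sum_i 2p_i))$, where $\mathcal{T}_X=\omega_X^{-1}$ is the line bundle of vector fields tangent to the nodes. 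If $(3)$ holds this space of sections vanishes (negative degree on every component), whence $\dim\Aut(X,\underline{\varrho})\leq\dim_{\bbk}\mathrm{Lie}=0$ and $(1)$ follows in every characteristic. Conversely, if $(3)$ fails some $\deg_C$ is non-positive; the degree formula together with the exclusion of $(0,1)$ and $(0,2)$ pins this $C$ down to a marking-free smooth rational tail or bridge, on which the positive-dimensional subgroup of $\mathrm{PGL}_2$ fixing its one or two nodes acts. This subgroup extends by the identity across the nodes to an honest positive-dimensional subgroup of $\Aut(X,\underline{\varrho})$, contradicting $(1)$.

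The hard part is this last equivalence: carrying out the Lie-derivative computation cleanly (including the point where $\bs{p}\nmid k_i$ is used) and, on one side, passing from vanishing of the Lie algebra to genuine finiteness of the automorphism group scheme uniformly in the characteristic — for which the inequality $\dim G\leq\dim_{\bbk}\mathrm{Lie}(G)$ is what saves the argument — and, on the other side, checking that a non-ample $\omega_X(\sum_i 2p_i)$ can only fail positivity along a marking-free rational tail or bridge, so that an explicit torus is genuinely available. Once the degree formula is in hand, the numerical chain $(2)\Leftrightarrow(3)\Leftrightarrow(4)$ is routine.
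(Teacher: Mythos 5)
Your treatment of $(2)\Leftrightarrow(3)\Leftrightarrow(4)$ is the same degree count on components that the paper uses, and your contrapositive argument for $(1)\Rightarrow(4)$ (a marking-free smooth rational tail or bridge carries a positive-dimensional subgroup of $\mathrm{PGL}_2$ fixing its special points, which extends by the identity across the nodes) is essentially the paper's converse direction. The gap is in $(4)\Rightarrow(1)$, where you replace the automorphism group by its Lie algebra. Your key step --- ``since $a_{k_i}\neq 0$ and $\bs{p}\nmid k_i$, invariance forces $\gamma_1=0$'' --- imports a hypothesis that the lemma does not have. The lemma is stated in arbitrary characteristic and the paper needs it that way: proposition \ref{prop:tame}, immediately afterwards, analyses precisely the case $\bs{p}\mid k_i$ with $\varrho_i$ a $\bs{p}$-th power, where the automorphism group scheme of a bubble is $\bs{\mu}_{\bs{p}^m}$ --- finite but non-reduced, with nonzero Lie algebra. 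In that case your inclusion $\mathrm{Lie}\,\Aut(X,\underline{\varrho})\subset \Hi^0\big(\mathcal{T}_X(-\textstyle\sum_i 2p_i)\big)$ fails, the bound $\dim G\leq\dim_{\bbk}\mathrm{Lie}(G)$ gives nothing, and your argument does not establish finiteness exactly where it is most delicate.

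The repair is the paper's: compute with actual group elements rather than infinitesimal ones. Stable components contribute only finite automorphism groups, so one reduces to a bubble $(R,p,e)\simeq(\bP^1,\infty,0)$, whose marked automorphisms form a $\bG_m$ acting by $t\mapsto\lambda t$; invariance of $\varrho=\sum_\ell a_\ell t^{-\ell}$ reads $a_\ell=\lambda^\ell a_\ell$ for all $\ell$, and $a_k\neq 0$ forces $\lambda^k=1$. The stabiliser is thus contained in the finite group scheme $\bs{\mu}_k$ in every characteristic, reduced or not. If you want to keep the Lie-algebra framing you must either distinguish the reduced group from the group scheme or add the hypothesis $\bs{p}\nmid k_i$ to the lemma --- and the latter would break its later use in defining bubbly curves and in proving tameness of $\fP_{g,\vk}$ in the wild case.
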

\begin{proof}
  $(3) \Rightarrow (2)$ is clear. Proving $(2)\Rightarrow (4)$ and $(4)\Rightarrow (3)$ boils down to a simple degree count of the restriction $\omega_\ell(\sum_{i=1}^n m_ip_i)$ of $\omega_X(\sum_{i=1}^n m_ip_i)$ to each irreducible component $X_\ell$. Both result from the relation   
  \begin{equation}
    \label{eq:ample}
    \deg\big(\omega_\ell(\sum_{i=1}^nm_ip_i)\big)  = 2g_{X_\ell} -2 +
    \big(\sum_{ \{i \mid p_i \in X_\ell\} }m_i \big)+ \sharp{\{e \in
      X_\ell\mid e \textrm{ nodal point of }\, X\}}.
  \end{equation}
Let us focus on $(1) \Leftrightarrow (4)$. Assume $(X, \underline{p})$ satisfies $(4)$. The only components of $(X, \underline{p})$ that have infinite automorphism groups are genus $0$ components $(R, p, e)$ having marked point $p$ and nodal point $e$. It is enough to show the automorphism group of $(R,p, e)$ endowed with a polar part $\varrho$ at $p$ are finite. Identify $(R, p, e)$ with $(\bP^1, \infty, 0)$ and let $t_\infty$, $t_0$ be the coordinates of $\bP^1$. An automorphism of $(\bP^1, \infty, 0)$ is an element in $\bG_m$ sending $t_0$ on $\lambda t_0$. The polar part $\varrho$ at $p$ is written in the chart $\bP^1\setminus\{0\}$ as
\begin{displaymath}
  \varrho = \sum_{\ell=1}^k \frac{a_\ell}{t^{\ell}}, \quad \textrm{with $a_k \neq 0$ and $t = \frac{t_\infty}{t_0}$}. 
\end{displaymath}
If $\lambda \in \bG_m$ is an automorphism of $(\bP^1, \infty, 0, \varrho)$ we have that for all $\ell \in \{1, \dots, k\}$, $a_\ell = a_\ell\lambda^{\ell}$. For $\ell = k$ we get $a_k = \lambda^k a_k$ and $\lambda$ is therefore a root of unity. Conversely, assume the automorphism group of $(X, \underline{\varrho})$ is finite. There is nothing to prove if $(X, \underline{p})$ is stable. Let $(R, q_1, q_2)$ be an unstable component of $X$ with $q_1$ and $q_2$ either marked or nodal points. Then $(R, q_1, q_2)$ has exactly $1$ marked point and $1$ nodal one. If this wasn't the case then either $q_1$ and $q_2$ are both nodal points and the infinite automorphism group of $(R, q_1, q_2)$ is a subgroup of $\Aut(X, \underline{\varrho})$ or $q_1$ and $q_2$ are both marked points and for connectivity reasons $(X, \underline{p}) = (R, q_1, q_2)$. This case is excluded for $(g, n) \neq (0,2)$. Other cases with less marked or nodal points are treated similarly.
\end{proof}
\begin{definition}
  A \emph{stable} polar curve $(X, \underline{\varrho})$ over $\bbk$ is a polar curve satisfying one of the conditions of lemma \ref{lem:stability}.
\end{definition}
We show that stable polar curves form an open substack of the one of polar curves. We first make a statement on \emph{prestable curves} underlying a stable polar curve.
\begin{definition}
  A bubbly curve is a prestable curve $(X, \underline{p})$ satisfying the conditions (2) to (4) of lemma \ref{lem:stability}. An unstable component of $(X, \underline{p})$ is called a bubble.
\end{definition}
Let $(\pi : \mc{X} \rightarrow S, \underline{D})$ be a prestable curve whose fibers are bubbly curves. Having fibers that are bubbly curves is stable under base change. Indeed, this means the invertible sheaf $\omega_{\mc{X}_s}(\sum_{i=1}^n 2D_{i,s})$ is ample on each fiber $X_s$ for $s \in S$. Now ampleness and $\omega_{\mc{X}}(\sum_{i=1}^n2D_{i})$ commute to base change. Furthermore, relative ampleness is an \'etale-local property on the base (\cite[III.3.7]{MilneEtCoh}). Thus having bubbly fibers is \'etale-local on the base. Both remarks ensure that the functor in groupoids $\fB_{g, n}$ sending a scheme $S$ on the groupoid of prestable curves having bubbly fibers is a substack of $\fM_{g, n}^{p\! s}$.
\begin{proposition}
  \label{prop:Bgn}
  $\fB_{g, n}$ is an open (and thus algebraic) substack of $\fM_{g, n}^{p\! s}$.
\end{proposition}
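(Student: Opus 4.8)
The plan is to check openness directly from the definition: for a substack, being open in $\fM_{g,n}^{p\! s}$ means that for every scheme $S$ and every morphism $S \to \fM_{g,n}^{p\! s}$, classified by a prestable marked family $(\pi : \mc{X} \to S, \underline{D})$, the fibre product $S \times_{\fM_{g,n}^{p\! s}} \fB_{g,n}$ is an open subscheme of $S$. By construction this fibre product is exactly the set of $s \in S$ whose geometric fibre $(X_s, \underline{p}_s)$ is a bubbly curve. Invoking the equivalences of \ref{lem:stability} (in particular $(1)\Leftrightarrow(3)$), this locus coincides with the set of $s$ for which the invertible sheaf $\omega_{X_s}(\sum_{i=1}^n 2 D_{i,s})$ is ample on $X_s$. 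So everything reduces to proving that this ampleness locus is open in $S$.

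First I would record that the fibrewise line bundles assemble into a single line bundle on the total space. Since $\pi$ is flat, proper, and has nodal fibres, the relative dualizing sheaf $\omega_{\mc{X}}$ is invertible and commutes with base change, so $\sL := \omega_{\mc{X}}(\sum_{i=1}^n 2 D_i)$ is an invertible sheaf on $\mc{X}$ whose restriction to each fibre is $\omega_{X_s}(\sum_{i=1}^n 2 D_{i,s})$ — this is precisely the base-change compatibility already noted in the paragraph preceding the statement. The morphism $\pi$ is proper and of finite presentation, prestable curves being proper over the base.

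The key input is then the openness of the ample locus: for a proper morphism of finite presentation $\pi : \mc{X} \to S$ and an invertible sheaf $\sL$ on $\mc{X}$, the set of $s \in S$ for which $\sL_s$ is ample on $X_s$ is open in $S$. Applied to $\sL = \omega_{\mc{X}}(\sum_{i=1}^n 2 D_i)$ this shows the bubbly locus is open, which is what the definition of an open substack demands; and since an open substack of an algebraic stack is itself algebraic, the parenthetical assertion is immediate. The one point that really needs care is the translation carried out above — identifying the fibre product with the ampleness locus of a \emph{globally} defined line bundle — after which the statement is a direct appeal to a standard properness-and-ampleness theorem (and, if one prefers, the étale-local nature of relative ampleness recorded before the statement lets one first pass to an étale chart over which $\pi$ is projective). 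I therefore do not expect a genuine obstacle beyond correctly setting up this reduction.
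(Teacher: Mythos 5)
Your argument is correct, but it reaches openness by a genuinely different route than the paper. You verify the definition of an open substack directly: pull back along an arbitrary $S \rightarrow \fM_{g, n}^{p\! s}$, identify the resulting locus with $\{s \in S \mid \omega_{X_s}(\sum_i 2D_{i,s}) \text{ is ample}\}$ for the globally defined invertible sheaf $\omega_{\mc{X}}(\sum_i 2D_i)$, and then quote the theorem that for a proper, finitely presented morphism the fibrewise ample locus of an invertible sheaf is open in the base (EGA IV$_3$, 9.6.4). The paper instead works \'etale-locally on the stack around a given $\bbk$-point: it takes a versal deformation, algebraizes the formal versal family via Grothendieck's existence theorem, and uses the fact that ampleness on the closed fibre propagates to relative ampleness of the algebraized family, hence to an \'etale neighbourhood contained in $\fB_{g,n}$. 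Your approach buys a shorter and more uniform argument that avoids deformation theory entirely, at the cost of invoking a heavier EGA black box; the paper's approach is more hands-on and stays within the deformation-theoretic toolkit it uses elsewhere, but is essentially re-deriving a special case of the same openness phenomenon. One small correction: the equivalence you should cite from lemma \ref{lem:stability} is among conditions $(2)$--$(4)$ (a bubbly curve is \emph{defined} by these, and in particular by $(3)$); condition $(1)$ concerns the automorphism group of a \emph{polar} curve and is not relevant here, so ``$(1)\Leftrightarrow(3)$'' is the wrong reference even though it does not affect the substance of your reduction.
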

\begin{proof}
  Let $(X, \underline{p})$ be a $\bbk$-point of $\fB_{g, n}$ and $(\pi: \mc{X} \rightarrow S, \underline{D})$ be a versal deformation of $(X, \underline{p})$ as a prestable curve. Since $(X, \underline{p})$ is a bubbly curve $\omega_{\mc{X}}(\sum_{i=1}^n 2D_i)_{\mid X} = \omega_X(\sum_{i=1}^n 2p_i)$ is ample. By Grothendieck existence theorem \cite[5.4.5]{EGA31} a formal versal deformation of $(X, \underline{p})$ is algebraizable and we get a family $(\pi^a : \mc{X}^a \rightarrow S^a, \underline{D}^a)$ over a complete henselian local ring extending $\pi$. Furthermore, the invertible sheaf $\omega_{\mc{X}^a}(\sum_{i=1}^n 2D_i^a)$ corresponding to this algebraization is relatively ample. We thus get a family of bubbly curves over an \'etale-local neighborhood of $(X, \underline{p})$ in the stack of prestable curves. This means $\fB_{g, n}$ is open in $\fM_{g, n}^{p\! s}$.
\end{proof} 
The previous definition implies families of polar curves having fibers that are stable polar curves define an open (algebraic) substack $\fP_{g, \vk}$ of the stack of polar curves.   
\begin{proposition}
  \label{prop:tame}
  $\fP_{g, \vk}$ is a tame stack in the sense of \cite{AOVtame}. It is a Deligne--Mumford stack if the characteristic of the base field $\bs{p}$ is either $0$ or does not divide any of the $k_i$s.
\end{proposition}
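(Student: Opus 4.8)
The plan is to verify the criterion of Abramovich--Olsson--Vistoli \cite{AOVtame}: a stack with finite inertia is tame precisely when its geometric stabilizers are linearly reductive finite group schemes, and it is Deligne--Mumford precisely when those stabilizers are moreover étale. Everything therefore reduces to a pointwise study of the automorphism group scheme $G = \Aut(X, \underline{\varrho})$ of a stable polar curve over $\bbk$. First I would record that $\fP_{g, \vk}$ has finite inertia: this is exactly what the stability condition buys us, since Lemma \ref{lem:stability} guarantees $G$ is finite, and $\fP_{g, \vk}$ was cut out of the (non-separated) stack of all polar curves precisely by restricting to objects with finite automorphism groups; finiteness of the inertia stack follows from quasi-finiteness of the fibres together with the resulting separatedness. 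Keel--Mori then supplies a coarse space and the AOV dichotomy applies.

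Next I would determine the shape of $G$ by means of the stabilization morphism contracting the bubbles, using Proposition \ref{prop:Bgn} to keep the underlying curve inside $\fB_{g,n}$. Writing $(X, \underline{p})$ as its stable core with genus-$0$ bubbles $(R_i, p_i, e_i)$ attached, any element of $G$ fixes the marked and nodal points, hence preserves the core and each bubble. On a bubble the computation already carried out in the proof of Lemma \ref{lem:stability} shows that an automorphism preserving the order-$k_i$ polar part $\varrho_i$ is a scaling $\lambda$ with $\lambda^{\ell}=1$ for every exponent $\ell$ with $a_\ell\neq 0$; the bubble stabilizer is thus the diagonalizable group $\bs{\mu}_{g_i}$ with $g_i = \gcd\{\ell : a_\ell \neq 0\}$, a divisor of $k_i$. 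More generally, at any smooth marked point preservation of the leading coefficient $a_{k_i}$ forces the linearization on the cotangent line $\sL_i$ to be a $k_i$-th root of unity, so every ``rotational'' contribution lands in a copy of $\bs{\mu}_{k_i}$.

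I would then assemble these contributions into an exact sequence
\begin{displaymath}
  1 \longrightarrow \Delta \longrightarrow G \longrightarrow Q \longrightarrow 1,
\end{displaymath}
with $\Delta$ the product of the diagonalizable bubble stabilizers together with the multiplicative directions recorded by the linearizations at the marked points, and $Q$ the induced group of automorphisms of the stabilized marked curve. Since $\Delta$ is diagonalizable it is linearly reductive in every characteristic, its connected part is of multiplicative type, and each $\bs{\mu}_{g_i}\subset\bs{\mu}_{k_i}$ is étale exactly when $\bs{p}\nmid k_i$. This already settles the Deligne--Mumford statement cleanly: $G$ is étale if and only if $G^0$ is trivial, i.e. if and only if the diagonalizable pieces are étale, which by $g_i \mid k_i$ holds whenever $\bs{p}=0$ or $\bs{p}\nmid k_i$ for all $i$. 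For tameness, by the structure theorem for linearly reductive finite group schemes in \cite{AOVtame} it then remains to see that $G^0$ is of multiplicative type (granted by the $\bs{\mu}$-structure) and that $\pi_0(G)$ has order prime to $\bs{p}$.

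The main obstacle is exactly this last point: controlling the étale quotient $Q$ in characteristic $\bs{p}$, that is, ruling out automorphisms of the underlying curve that act trivially on every cotangent line $\sL_i$ yet survive the polar-part constraint. Such automorphisms are invisible to the leading-coefficient condition and couple to $\varrho_i$ only through its lower-order terms, so the delicate step is to show that after imposing preservation of the \emph{full} polar part the surviving component group is of order prime to $\bs{p}$. This is where the hypothesis on $\bs{p}$ has to do genuine work, and it is the step I expect to require real care; the bubble and linearization computations feeding $\Delta$ and $G^0$ are, by contrast, essentially formal.
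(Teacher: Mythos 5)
Your reduction to the geometric stabilizers and your computation of their structure is, for the Deligne--Mumford half, the paper's argument in different clothing. The paper checks that the diagonal is unramified by showing that automorphism groups over $\Spec(\bbk[\epsilon]/(\epsilon^2))$ are trivial: the only candidate infinitesimal vector fields live on bubbles, and there the condition that $\id + \epsilon\partial$ fix $\varrho = \sum_\ell a_\ell t^{-\ell}$ reads $\lambda\sum_\ell \ell a_\ell t^{-\ell} = 0$, which forces $\lambda = 0$ as soon as $\bs{p}\nmid k_i$ because $a_{k_i}$ is a unit. Your identification of the kernel of $G \to Q$ with the product of the bubble stabilizers $\bs{\mu}_{g_i}$, where $g_i = \gcd\{\ell : a_\ell \neq 0\}$ divides $k_i$, together with the observation that $G^0$ must land in this kernel because $Q$ is \'etale, is an equivalent (indeed slightly finer) packaging of the same computation; that part of your proposal is sound, modulo the muddled description of $\Delta$ (the linearizations at marked points lying on the stable core are not part of the kernel of $G\to Q$, though this does not affect the conclusion).

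The genuine gap is the tameness claim, which you do not prove. First, you have misread the statement: tameness is asserted \emph{unconditionally}, the hypothesis on $\bs{p}$ entering only in the Deligne--Mumford refinement, so there is no ``hypothesis on $\bs{p}$'' available to do the work you expect of it in controlling $\pi_0(G)$. Second, the step you defer is precisely the crux: by the structure theorem of \cite{AOVtame}, linear reductivity of $G$ requires the \'etale quotient $Q$ --- the induced group of automorphisms of the stabilized marked curve preserving the polar data --- to be of order prime to $\bs{p}$, and neither the leading-coefficient condition nor the bubble analysis says anything about this. The paper disposes of the point in one sentence, asserting that the automorphism group is a product of finite tame groups by copies of $\bs{\mu}_{\bs{p}^\ell}$ before invoking \cite[3.2]{AOVtame}; a complete argument would have to rule out wild automorphisms of the stable core fixing all the polar parts (for instance via the computation that a unipotent germ $t\mapsto t+\alpha t^{j+1}+\cdots$ fixing a polar part of order $k$ forces $k\alpha a_k=0$, which only helps when $\bs{p}\nmid k$), and would in addition have to handle automorphisms supported on components of the stable core carrying no marked point, which no local computation at the $p_i$ can detect. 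As written, your proposal establishes the Deligne--Mumford statement but leaves the tameness statement unproved.
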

\begin{proof}
  Following \cite[8.1]{Laumon} to show $\fP_{g, \vk}$ is a Deligne--Mumford stack it is enough to show its diagonal is formally unramified (\cite[4]{DMMg}). Since $\fB_{g, n}$ and $\fP_{g, \vk} \rightarrow \fB_{g,n}$ are locally of finite presentation it is the case for $\fP_{g, \vk}$ as well. It is therefore enough to show fibers of closed points on the diagonal are discrete and reduced. These fibers are automorphism groups of points of $\fP_{g, \vk}$, they are quasi-finite. To show they are reduced it is enough to check automorphism groups over $S_\epsilon = \Spec(\bbk[\epsilon]/(\epsilon^2))$ are trivial. Let $(\mc{X} \rightarrow S_\epsilon, \underline{\varrho})$ be such an object. Automorphisms of $(\mc{X}, \underline{D})$ are given by infinitesimal vector fields on the central fiber $(X, \underline{p})$ fixing the marking. There are no such vector fields on stable marked curves (\cite[1.4]{DMMg}), we only have to check the case of a bubble $(R, p, e)$. Vector fields on $R$ fixing $p$ and $e$ are classified by $\Hi^0(T_R(-p-e))$ which is of dimension $1$. Identify $(R, p, e)$ with $(\bP^1, \infty, 0)$. A vector field on $(\bP^1, \infty, 0)$ is of the form $\lambda t\frac{\partial}{\partial t}$ where $t$ is the local parameter at $\infty$ and $\lambda \in \bbk$. A polar part of order $k$ at $p$ is locally of the form $\varrho = \sum_{\ell=1}^{k}a_\ell t^{-\ell}$. The vector field acts on $\varrho$ through the automorphism $\alpha = \id + \epsilon\partial$. Asking that $\alpha$ fixes $\varrho$ means 
\begin{displaymath}
  \sum_{\ell=1}^{k_p} \frac{a_\ell}{t^\ell}  = \sum_{\ell=1}^{k_p} \frac{a_\ell}{t^\ell+\epsilon\partial(t^\ell)} = \sum_{\ell=1}^{k_p} \frac{a_\ell}{t^\ell( 1 +\lambda\ell\epsilon)}  = \sum_{\ell=1}^{k_p} \frac{a_\ell}{t^\ell} - \lambda\epsilon \sum_{\ell=1}^{k_p} \frac{\ell a_\ell}{t^\ell}.
\end{displaymath}
Hence $\partial \neq 0$ if and only if $\bs{p} > 0$ and $\ell$ is a multiple $\bs{p}$ for every $a_\ell \neq 0$. In other words $\varrho$ is a $\bs{p}$ power. In this case the automorphism group scheme of $(\bP^1, \varrho)$ is a $\bs{\mu}_{\bs{p}^m}$. The automorphism group of a polar curve having vector fields is thus a product of finite tame groups by a number of $\bs{\mu}_{p^\ell}$. This is a linear reductive group and tameness follows from \cite[3.2]{AOVtame}. If either $\bs{p} \nmid k$ or $\bs{p} = 0$ then $\partial = 0$ because $a_k \neq 0$. If this is the case for every $k_i$ then $\fP_{g, \vk}$ is Deligne--Mumford. 
\end{proof}

\subsection{Structure of $\fP_{g, \vk}$ over $\fMm_{g, n}$}

There is a natural forgetful onto morphism $\fP_{g, \vk} \rightarrow \fB_{g, n}$ sending a stable polar curve on the underlying bubbly curve. Composing with the stabilization morphism $\rho : \fB_{g,n} \rightarrow \fMm_{g,n}$ (see \cite{BehManin}) contracting possible bubbles one gets an onto morphism $\varkappa : \fP_{g, \vk} \rightarrow \fMm_{g, n}$. Our goal is to study $\varkappa$, we start by studying $\rho$. 
\begin{remark}
We shall often restrict our study to the $1$-marked case. This is due to the following fact: let $\fP_{g, k_i, n-1}$ be the fiber product $\fP_{g, k_i} \times_{\fMm_{g, 1}} \fMm_{g, n}$ where $\fP_{g, k_i} \rightarrow \fMm_{g, 1}$ is the forgetful map $\varkappa$ in the $1$-marked case and $\fMm_{g, n} \rightarrow \fMm_{g, 1}$ keeps only the $i$-th marked point and stabilizes the resulting marked curve. Points of $\fP_{g, k_i, n-1}$ are marked bubbly curves having one polar part along $p_i$ and whose bubbles are marked by $p_i$. The fact is that
\begin{equation}
  \label{eq:isoprodfibre}
  \fP_{g, \vk} \simeq \fP_{g, k_1, n-1}\times_{\fMm_{g, n}}\fP_{g, k_2, n-1} \times_{\fMm_{g, n}}  \cdots \times_{\fMm_{g,n}} \fP_{g, k_n, n-1}. 
\end{equation}
The morphism from $\fP_{g, \vk}$ to the right-hand side comes from the forgetful morphisms keeping the $i$-th polar part and contracting bubbles not containing $p_i$.
\end{remark}
\begin{lemma}
  \label{lem:fonctcontraction}
  Let $(\pi : \mc{X} \rightarrow S, \underline{D})$ be a prestable curve and $(\pi^\star:
  \mc{X}^\star \rightarrow S, \underline{D}^\star)$ its stable model. For $\underline{m} = (m_1, \ldots, m_n) \in \left(\N^*\right)^n$ the stabilization morphism gives natural maps $\alpha_i : \mc{N}_{m_i D_i} \rightarrow \mc{N}_{m_i D_i^\star}$. 
\end{lemma}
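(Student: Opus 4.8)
The plan is to build $\alpha_i$ directly from the contraction morphism $c : \mc{X} \to \mc{X}^\star$ underlying the stabilization, reducing everything to a local computation around a single bubble. I will treat $\mc{N}_{m_iD_i}$ as the (locally free) sheaf $\pi_*(\sO_{\mc X}(m_iD_i)/\sO_{\mc X})$ of Laurent tails of order at most $m_i$ along $D_i$, exactly as for $\sP_{\mc X,\vk}$ but with arbitrary multiplicities $\underline m$. First I would use the product decomposition \ref{eq:isoprodfibre} together with the fact that $\mc{N}_{m_iD_i}$ is supported on the section $D_i$ to reduce to the $1$-marked case, so that only the chain of bubbles carrying $D_i$ matters. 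Away from the contracted locus $c$ is an isomorphism and $D_i^\star=c(D_i)$, so there the desired $\alpha_i$ is the tautological identification; all the content sits over the bubbles, where $c$ collapses the bubble tree onto the single point $p_i^\star$.

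Since $\pi=\pi^\star\circ c$, and hence $\pi_*=\pi^\star_*\,c_*$, it suffices to produce a morphism of $\sO_{\mc X^\star}$-modules $c_*\!\bigl(\sO_{\mc X}(m_iD_i)/\sO_{\mc X}\bigr)\to \sO_{\mc X^\star}(m_iD_i^\star)/\sO_{\mc X^\star}$ and to push it forward by $\pi^\star_*$; both sheaves are supported on $D_i^\star$, since $c$ carries the support of the left-hand sheaf to $c(D_i)=D_i^\star$. To write this map down I would model $c$ locally, after choosing a one-parameter smoothing of the node, as the blow-down of the exceptional $\bP^1$: the bubble then appears as the exceptional divisor of $\mathrm{Bl}_{p_i^\star}\mc X^\star$, and the two standard charts ($\epsilon=sw$ and $s=\epsilon w'$) give explicit coordinates $w,w'$ along the bubble in terms of the base parameter and of the coordinate $s$ cutting out $D_i^\star$. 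On the rational bubble a Laurent tail of order $\le m_i$ at $p_i=\{w'=0\}$ is represented by a polynomial of degree $\le m_i$ in $w$, regular at the node; reading its coefficients against the coordinate identification supplied by the blow-down relation yields precisely a Laurent tail at $p_i^\star$, and this assignment is the sought morphism.

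The hard part will be checking that this assignment is intrinsic, i.e.\ independent of the chosen coordinate $t$ at $p_i$ and of the coordinate $s$ at $p_i^\star$, and compatible with the residual $\bs{\mu}_{m_i}$-action. For this I would invoke the formalism of Section \ref{subsec:Toroidal}: a change of coordinate is implemented by an element $\vartheta\in\Diff_k$, and the bi-equivariant isomorphisms $\Psi_\vartheta$ intertwine the two local pictures, so that the locally defined maps glue and descend past $\bs{\mu}_{m_i}$. A genuine bubble chain rather than a single bubble is handled by iterating the construction along the chain, starting from $p_i$ and working toward the stable component, each blow-down contributing one such comparison and the composite being forced by the same rigidity. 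Finally, naturality is automatic: the pushforwards $\pi_*(\sO_{\mc X}(m_iD_i)/\sO_{\mc X})$ commute with base change by the finite-morphism argument already used for $\sP_{\mc X,\vk}$, and the formation of the stable model $\mc X^\star$ and of the contraction $c$ commutes with base change as well, whence so does $\alpha_i$.
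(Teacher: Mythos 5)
Your strategy is genuinely different from the paper's, and as written it has two real gaps. First, the local model you rely on --- realizing the contraction as the blow-down of an exceptional $\bP^1$ in the charts $\epsilon = sw$, $s=\epsilon w'$ --- presupposes a one-parameter smoothing of the node. Over a general base $S$ the node separating the bubble from the rest of the curve is cut out by $uv = t$ for an arbitrary $t\in\sO_S$ (possibly $0$, possibly a non-unit on a non-reduced or higher-dimensional base), and the contraction is Knudsen's $\Proj$ of $\rho_*\sO_{\mc{X}}(\sum_i D_i)$, not a blow-up of a smooth surface. So your recipe only defines $\alpha_i$ on the locus where a smoothing parameter exists, and the extension ``by continuity'' to the locus where the bubble persists (where $\alpha_i$ must in fact vanish, as the local picture of proposition \ref{prop:localfinal} shows) is precisely what needs an argument. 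Second, the step you yourself flag as the hard part --- independence of the coordinates $t$ at $p_i$ and $s$ at $p_i^\star$, and gluing over an open cover of $S$ --- is not discharged by citing section \ref{subsec:Toroidal}: the isomorphisms $\Psi_\vartheta$ there compare two equivariant embeddings of $\Diff_k$ attached to two coordinates at a marked point of a \emph{fixed} curve; they say nothing about compatibility across the contraction $\rho$, which is the whole content of the lemma.

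The paper sidesteps all of this with a short global argument you should compare against: since $\rho$ respects the markings, $D_i\subset\rho^{-1}D_i^\star$, so there is a tautological morphism $\rho^*\sO_{\mc{X}^\star}(-m_iD_i^\star)\rightarrow\sO_{\mc{X}}(-m_iD_i)$ of ideal sheaves; dualizing gives $\sO_{\mc{X}}(m_iD_i)\rightarrow\Hhom_{\sO_{\mc{X}}}(\rho^*\sO_{\mc{X}^\star}(-m_iD_i^\star),\sO_{\mc{X}})$, adjunction and $\rho_*\sO_{\mc{X}}=\sO_{\mc{X}^\star}$ turn this into $\rho_*\sO_{\mc{X}}(m_iD_i)\rightarrow\sO_{\mc{X}^\star}(m_iD_i^\star)$, and passing to the quotients by $\sO_{\mc{X}}$ and $\sO_{\mc{X}^\star}$ yields $\alpha_i$. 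This is coordinate-free, works over any base, makes the vanishing on the persistent-bubble locus automatic, and commutes with base change for free. If you want to keep your computation, its proper role is as a \emph{check}, in the one-parameter smoothing chart, that the canonical map just described is multiplication by $\epsilon^\ell$ on the $\ell$-th graded piece of the filtration by pole order --- not as the definition of $\alpha_i$.
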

\begin{proof}
  Since $\rho$ commutes to the markings defined by $D_i$ and $D_i^\star$ we have $D_i \subset \rho^{-1}D_i^\star$. Thus $\rho$ defines an $\sO_{\mc{X}}$-morphism $\rho^*\sO_{\mc{X}^\star}(-m_iD_i^\star) \rightarrow \sO_{\mc{X}}(-m_iD_i)$. 
Dualizing we get the morphism
\begin{displaymath}
  \begin{tikzpicture}[baseline=(current  bounding  box.center),>=latex]
    \matrix (m) 
    [matrix of math nodes, row sep=2.5em, column sep=2em, text height=1ex, text depth=0.25ex] 
    { \sO_{\mc{X}}(m_iD_i) & \Hhom_{\sO_{\mc{X}}}(\rho^*\sO_{\mc{X}^\star}(-m_iD_i^\star), \sO_{\mc{X}})\\};  
    \path[->,font=\scriptsize]  
    (m-1-1) edge (m-1-2);
  \end{tikzpicture}
\end{displaymath} 
which gives by adjunction 
\begin{displaymath}
\begin{tikzpicture}[baseline=(current  bounding  box.center),>=latex]
    \matrix (m) 
    [matrix of math nodes, row sep=2.5em, column sep=2em, text height=1ex, text depth=0.25ex] 
    { \rho_*\sO(m_iD_i) & \Hhom_{\sO_{\mc{X}}^\star}(\sO_{\mc{X}^\star}(-m_iD_i^\star), \rho_*\sO_{\mc{X}}).\\};  
    \path[->,font=\scriptsize]  
    (m-1-1) edge (m-1-2);
  \end{tikzpicture}
\end{displaymath}
The term on the right is isomorphic to $\sO_{\mc{X}^\star}(m_iD_i^\star)$ for $\rho_*\sO_{\mc{X}}= \sO_{\mc{X}^\star}$. We get the desired map by moding out by $\sO_{\mc{X}} \subset \sO_{\mc{X}}(m_iD_i)$ and $\sO_{\mc{X}^\star} \subset \sO_{\mc{X}}^\star(m_iD_i^\star)$.
\end{proof} 
Following \cite[2, (II)]{Knudsen2} of proof 2.4, the bubbly curve $(\pi : \mc{X}\rightarrow S, \underline{D})$ can be recovered as the relative $\Proj$ over $\mc{X}^\star$ of $\rho_*\sO_{\mc{X}}(\sum_{i=1}^n D_i)$. The $n$ marked $S$-points are given by 
\begin{displaymath}
(\rho_*\sO_{\mc{X}}(D_i))\mid_{D_i} \twoheadrightarrow \rho_*\mc{N}_{D_i}
\end{displaymath}
where $\rho_*\mc{N}_{D_i}$ is invertible because it is supported on $D_i$ and thus is equal to $(\rho\mid_{{\cD_i}})_*\mc{N}_{D_i}$. The starting point to the study of $\rho$ is the following fact: we have a commutative diagram 
\begin{equation}
  \label{diagmu}
  \begin{tikzpicture}[baseline=(current  bounding  box.center),>=latex]
    \matrix (m) 
    [matrix of math nodes, row sep=2.5em, column sep=2em, text
    height=1ex, text depth=0.25ex]  
    {   0  & \sO_{\mc{X}^\star} & \rho_*\sO_{\mc{X}}(\sum_{i=1}^nD_i) & \bigoplus_{i=1}^n\rho_*\mc{N}_{D_i}  & 0\\
        0  & \sO_{\mc{X}^\star} & \sO_{\mc{X}^\star}(\sum_{i=1}^nD_i^\star) & \bigoplus_{i=1}^n\mc{N}_{D_i^\star} & 0\\};  
    \path[->,font=\scriptsize]  
    (m-1-1) edge (m-1-2)
    (m-1-2) edge (m-1-3)
    (m-1-3) edge (m-1-4)
    (m-1-4) edge (m-1-5)
    (m-2-1) edge (m-2-2)
    (m-2-2) edge (m-2-3)
    (m-2-3) edge (m-2-4)
    (m-2-4) edge (m-2-5)
    (m-1-3) edge node[auto] {$\hat{\alpha}$} (m-2-3)
    (m-1-4) edge node[auto] {$\alpha$} (m-2-4);
    \draw[double, double distance=1.5 pt, font=\scriptsize]
    (m-1-2) -- (m-2-2);
  \end{tikzpicture}
\end{equation}
where $\hat{\alpha}$ is induced by $\rho$ and $\alpha$ is the direct sum of the $\alpha_i$s of lemma \ref{lem:fonctcontraction} for $\underline{m} =(1, \ldots, 1)$. This diagram means $\rho_*\sO_{\mc{X}}(\sum_{i=1}^nD_i)$ is the Yoneda pullback of the low horizontal extension by $\alpha$, i.e. 
\begin{displaymath}
\rho_*\sO_{\mc{X}}\Big(\sum_{i=1}^nD_i\Big) \simeq \sO_{\mc{X}^\star}\Big(\sum_{i=1}^n D_i^\star\Big) \times_{\mc{N}_{\sum_{i=1}^n D_i^\star}} \rho_*\mc{N}_{\sum_{i=1}^n D_i}
\end{displaymath}
and $\hat{\alpha}$ is given by the projection on the first factor. The only point to clarify about diagram \ref{diagmu} is the right exactness of the first row. The morphism $\rho$ satisfies $\rho_*\sO_{\mc{X}} = \sO_{\mc{X}^\star}$ and its fibers are rational $1$-dimensional curves. In particular $\Hi^1(\rho^{-1}(x), \sO_{\rho^{-1}(x)})= 0$ for every point $x\in \mc{X}^\star$ and following \cite[1.5]{Knudsen2} we get that $\Ri^1\rho_*\sO_{\mc{X}} = 0$.
\begin{notation}
\label{nota:strctcone}
Given morphisms $\alpha_i: \ms{R}_i \rightarrow \bigoplus_{i=1}^n \mc{N}_{D_i^\star}$ of invertible $\sO_{\mc{X}^\star}$-modules write 
\begin{displaymath}
  \sE_\alpha= \sO_{\mc{X}^\star}\Big(\sum_{i=1}^nD_i^\star\Big) \times_{\mc{N}_{\sum_{i=1}^n D_i^\star}} \Big(\bigoplus_{i=1}^n\ms{R}_i\Big) 
\end{displaymath}
and $D_{\alpha, i}$ for the marked $S$-point defined by $\tau_{\alpha, i} : (\sE_\alpha)\mid_{D_i^\star} \twoheadrightarrow \mc{N}_{\cD_i^\star}$.
\end{notation} 
\begin{proposition}
  \label{prop:fiberBM}
  The fiber of $\rho$ over $(\pi^{\star}: \mc{X}^{\star} \rightarrow S, \underline{D}^\star)$ is isomorphic to the stack whose 
\begin{enumerate}
\item
objects over $T \in \SchS$ are morphisms $(\alpha_i : \ms{R}_i \rightarrow \mc{N}_{D_i^\star})_{i=1}^n$ of invertible sheaves,
\item
morphisms from $(\alpha_i :\ms{R}_i \rightarrow \mc{N}_{D_i^\star})_{i=1}^n$ et $(\beta_i : \ms{S}_i \rightarrow \mc{N}_{D_i^\star})_{i=1}^n$ over $f \in \Hom_S(T,T')$ are isomorphisms $\ms{R}_i \simeq f^*\ms{S}_i$ sending $\beta_i$ on $\alpha_i$.
\end{enumerate}
\end{proposition}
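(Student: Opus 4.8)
The plan is to exhibit the asserted equivalence by producing a functor in each direction and checking they are quasi-inverse, the backbone being Knudsen's reconstruction of a bubbly curve from its stable model together with the sheaf $\rho_*\sO_{\mc{X}}(\sum_i D_i)$, and the explicit Yoneda-pullback description recorded in diagram \ref{diagmu}. Throughout, the fiber of $\rho$ over $(\pi^\star : \mc{X}^\star \rightarrow S, \underline{D}^\star)$ has as objects over $T$ a bubbly curve $\mc{X} \rightarrow T$ together with an identification of its stable model with $\mc{X}^\star_T = \mc{X}^\star \times_S T$, and as morphisms isomorphisms of bubbly curves compatible with that identification.

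First I would build the functor $\Phi$ from this fiber to the stack of tuples. To a bubbly curve $\mc{X} \rightarrow T$ over $\mc{X}^\star_T$ I associate $\ms{R}_i := \rho_*\mc{N}_{D_i}$ together with the morphisms $\alpha_i : \ms{R}_i \rightarrow \mc{N}_{D_i^\star}$ of Lemma \ref{lem:fonctcontraction}. Each $\ms{R}_i$ is invertible, being supported on the section $D_i$ over which $\rho$ restricts to an isomorphism, and since $\mc{N}_{D_i}$ is supported on $D_i$ finite over $T$ both $\ms{R}_i$ and $\alpha_i$ commute with base change; this makes $\Phi$ a well-defined morphism of stacks carrying isomorphisms of bubbly curves fixing $\mc{X}^\star$ to isomorphisms of the associated tuples.

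Conversely I would define $\Psi$ from tuples to the fiber. Given $(\alpha_i : \ms{R}_i \rightarrow \mc{N}_{D_i^\star})_{i=1}^n$ form $\sE_\alpha$ as in Notation \ref{nota:strctcone} and let $\mc{X}_\alpha$ be the relative $\Proj$ over $\mc{X}^\star$ attached to $\sE_\alpha$ by Knudsen's procedure \cite[2.4]{Knudsen2}, with marked sections $D_{\alpha, i}$ cut out by the surjections $\tau_{\alpha, i}$. The content I would verify here is that $\mc{X}_\alpha \rightarrow \mc{X}^\star$ has rational one-dimensional fibers, satisfies $(\rho_\alpha)_*\sO_{\mc{X}_\alpha} = \sO_{\mc{X}^\star}$ and $\Ri^1(\rho_\alpha)_*\sO_{\mc{X}_\alpha} = 0$, and contracts to $\mc{X}^\star$, so that $\mc{X}_\alpha$ is a bubbly curve with stable model $\mc{X}^\star$; this is exactly the situation in which Knudsen's expansion applies, the extension $\sE_\alpha$ of $\sO_{\mc{X}^\star}$ by $\bigoplus_i \ms{R}_i$ playing the role of $\rho_*\sO_{\mc{X}}(\sum_i D_i)$.

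Finally I would check that $\Psi$ and $\Phi$ are quasi-inverse. For $\Psi\circ\Phi$, starting from a bubbly curve $\mc{X}$ the identification $\ms{R}_i = \rho_*\mc{N}_{D_i}$ and diagram \ref{diagmu} give a canonical isomorphism $\sE_\alpha \simeq \rho_*\sO_{\mc{X}}(\sum_i D_i)$ compatible with the marked-point surjections, so Knudsen's reconstruction returns $\mc{X}$ itself. For $\Phi\circ\Psi$ one computes $(\rho_\alpha)_*\mc{N}_{D_{\alpha, i}}$ for the curve produced by $\Psi$ and recovers $(\ms{R}_i, \alpha_i)$ directly from the defining pullback square of $\sE_\alpha$. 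The main obstacle is precisely this round trip: one must ensure that Knudsen's contraction and expansion are mutually inverse \emph{in families} and that the sections $D_{\alpha, i}$ together with their induced maps of normal sheaves match the input $(\ms{R}_i, \alpha_i)$ on the nose rather than merely up to a noncanonical isomorphism. Granting the base-change functoriality already built into Lemma \ref{lem:fonctcontraction} and the naturality of the Yoneda pullback in diagram \ref{diagmu}, these identifications are canonical, full faithfulness of $\Phi$ follows from the fact that a bubbly curve over $\mc{X}^\star$ is determined by its sheaf data, and both composites are isomorphic to the identity.
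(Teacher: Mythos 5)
Your proposal is correct and follows essentially the same route as the paper: the forward functor is the one built from $\rho_*\mc{N}_{D_i}$ and Lemma \ref{lem:fonctcontraction}, and the inverse direction rests on the same identification $\sE_\alpha \simeq \rho_*\sO_{\mc{X}_\alpha}(D_\alpha)$ coming from diagram \ref{diagmu} and Knudsen's $\Proj$ reconstruction. The only difference is presentational -- you package the argument as a pair of quasi-inverse functors where the paper proves full faithfulness and essential surjectivity of a single functor, and the ``round trip'' verification you flag as the main obstacle is exactly what the paper's Fitting-ideal computation of $\rho^*\sE_\alpha = \sO_{\mc{X}_\alpha}(D_\alpha)$ supplies.
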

\begin{proof}
  We restrict the proof to the $1$-marked case. It is clear that points (1) and (2) define a category fibered in groupoids. For each $T$-object $(\pi : \mc{X} \rightarrow T, D)$ and $T'$-object $(\pi': \mc{X}' \rightarrow T', \sigma")$ in the fiber of $\rho^{-1}(\pi^\star : \mc{X}^\star \rightarrow S, D^\star)$ and each morphism $\psi$ from $(\pi', D')$ on $(\pi, D)$ over $f \in \Hom_S(T',T)$ (i.e. isomorphism $\psi : (\pi', D') \simeq f^*(\pi, D)$) define the functor (of categories fibered in groupoids) $\bs{F}$ as follows   
\begin{itemize}
\item $\bs{F}(\pi : \mc{X} \rightarrow T, D)$ is the morphism $\alpha_{\pi} : \rho_*\mc{N}_D \rightarrow \mc{N}_{D^\star}$ of lemma \ref{lem:fonctcontraction},
\item since $\psi$ fixes the marked point we get an isomorphism $\mu_{\psi} : \rho_*\mc{N}_{D'} \simeq \rho_*f^*\mc{N}_{D}$ sending $\alpha_{\pi'}$ on $\psi^*\alpha_{\pi}$. Since $\rho_*\mc{N}_{D}$ commutes to base change we get $\rho_*f^*\mc{N}_D \simeq f^*\rho_*\mc{N}_D$. We define $\bs{F}(\psi)$ as the morphism $(\mu_{\psi} : \rho_*\mc{N}_{D'} \simeq f^*\rho_*\mc{N}_{D})$.   
\end{itemize}
This functor is fully faithfull: an $\sO_{T'}$-isomorphism $\mu : \rho_*\mc{N}_{D'}\simeq f^*\rho_*\mc{N}_{D}$ for $T' \in \SchS$ defines by pulling-back Yoneda extensions an isomorphism 
\begin{displaymath}
  \alpha^{*_Y}\mu : \rho_*\sO_{\mc{X}'}(D') \simeq f^*\rho_*\sO_{\mc{X}}(D)
\end{displaymath}
such that $\mu^{-1} \tau_{\alpha'} = \tau_{\alpha} \alpha^{*_Y}\mu$. This is exactly saying the $(\mc{X}^\star, D^\star)$-isomorphism 
\begin{displaymath}
  \bP(\rho_*\sO_{\mc{X'}}(D'))\simeq \bP(f^*\rho_*\sO_{\mc{X}}(D)) = f^{-1}\bP(\rho_*\sO_{\mc{X}}(D))
\end{displaymath}
attached to $\alpha^{*_Y}\mu$ fixes the marked points. 

Let's show $\bs{F}$ is essentially surjective. Let $(\pi_\alpha : \mc{X}_\alpha\rightarrow T, D_\alpha)$ be the bubbly $T$-curve $(\pi_\alpha : \bP(\sE_\alpha)\rightarrow T, D_\alpha)$ attached to $\alpha : \ms{R} \rightarrow \mc{N}_{D^\star}$. We are going to show we have canonical isomorphisms $\sE_{\alpha}\simeq\rho_*\sO_{\mc{X}_\alpha}(D_\alpha)$ and $\ms{R} \simeq \rho_*\mc{N}_{D_\alpha}$ commuting to the quotient morphisms $\sE_\alpha \rightarrow \ms{R}$ and $\rho_*\sO_{\mc{X}_\alpha}(D_\alpha) \rightarrow \rho_*\mc{N}_{D_\alpha}$. The first isomorphism gives isomorphisms from $(\pi_\alpha:\mc{X}_\alpha \rightarrow T)$ on $(\bP(\rho_*\sO_{\mc{X}_\alpha}(D_\alpha))\rightarrow T)$ and the second says these fix $D_\alpha$. Let $\sigma_\alpha$ be the $S$-section defining $D_\alpha$ and given by $\tau_\alpha: \sE_\alpha \twoheadrightarrow (\sigma^\star)_*\ms{R}$. Consider the exact sequence 
\begin{equation}
  \label{suite:Ealpha}
  \begin{tikzpicture}[baseline=(current  bounding  box.center), >=latex]
    \matrix (m) 
    [matrix of math nodes, row sep=2.5em, column sep=2em, text height=1ex, text depth=0.25ex] 
    { 0 &  \sO_{\mc{X}_\alpha} & \rho^*\sE_\alpha & \rho^*\rho_*\sigma_{\alpha *}\ms{R}  & 0.\\};  
    \path[->,font=\scriptsize]  
    (m-1-1) edge (m-1-2)
    (m-1-2) edge (m-1-3)
    (m-1-3) edge (m-1-4)
    (m-1-4) edge (m-1-5);
  \end{tikzpicture}
\end{equation}
By definition $\rho^*\sE_\alpha$ is the invertible sheaf $\sO_{\bP(\sE_\alpha)}(1)$ on $\mc{X}_{\alpha}=\bP(\sE_{\alpha})$. Because $\sigma_{\alpha
  *}\ms{R}$ is supported on $D_\alpha$ we have that $\rho^*\rho_*\sigma_{\alpha *}\ms{R} = \sigma_{\alpha *}\ms{R}$. Now locally $\sigma_{\alpha *} \ms{R}
= \sO_{D_\alpha}$ and the $0$-Fitting ideal of the left morphism of the exact sequence \ref{suite:Ealpha} describes the equations of $D_\alpha$ in $\mc{X}_\alpha$. By definition of Fitting ideals these equations are given by the image of the morphism  
\begin{displaymath}
\begin{tikzpicture}[baseline=(current  bounding  box.center),>=latex]
    \matrix (m) 
    [matrix of math nodes, row sep=2.5em, column sep=2em, text height=1ex, text depth=0.25ex] 
    { \sO_{\mc{X}_\alpha}\otimes\big(\rho^*\sE_\alpha\big)^\vee
      = \big(\rho^*\sE_\alpha\big)^\vee & \sO_{\mc{X}_\alpha}.\\};  
    \path[->,font=\scriptsize]  
    (m-1-1) edge (m-1-2);
  \end{tikzpicture}
\end{displaymath}
attached to $\sO_{\mc{X}_\alpha} \rightarrow \rho^*\sE_{\alpha}$. This morphism is locally given by a generator of $\sO_{\bP(\sE_\alpha)}(-1) =
\rho^*\sE_\alpha^\vee$ and has image $\sO_{\mc{X}_\alpha}(-D_\alpha) \subset \sO_{\mc{X}_{\alpha}}$. We thus get an isomorphism $\rho^*\sE_\alpha = \sO_{\mc{X}_\alpha}(D_\alpha)$ and the injection $\sO_{\mc{X}_\alpha} \rightarrow \rho^*\sE_\alpha$ is the canonical map $\sO_{\mc{X}_\alpha} \rightarrow \sO_{\mc{X}_\alpha}(D_\alpha)$. Following the projection formula and the fact that $\rho_*\sO_{\mc{X}_\alpha} = \sO_{\mc{X}^\star}$ we get that $\sE_\alpha \simeq \rho_*\sO_{\mc{X}_\alpha}(D_\alpha)$. To conclude, we have got a commutative diagram giving the needed material to end the proof
\begin{displaymath}
\begin{tikzpicture}[baseline=(current  bounding  box.center),>=latex]
    \matrix (m) 
    [matrix of math nodes, row sep=2.5em, column sep=2em, text
    height=1ex, text depth=0.25ex]  
    {   0  & \sO_{\mc{X}^\star} & \sE_\alpha & \ms{R}  & 0\\
        0  & \sO_{\mc{X}^\star} &
        \rho_*\sO_{\mc{X}_\alpha}(D_\alpha) &
        \rho_*\mc{N}_{D_\alpha} & 0.\\};  
    \path[->,font=\scriptsize]  
    (m-1-1) edge (m-1-2)
    (m-1-2) edge (m-1-3)
    (m-1-3) edge (m-1-4)
    (m-1-4) edge (m-1-5)
    (m-2-1) edge (m-2-2)
    (m-2-2) edge (m-2-3)
    (m-2-3) edge (m-2-4)
    (m-2-4) edge (m-2-5);
    \draw[double, double distance=1.5 pt, font=\scriptsize]
    (m-1-3) -- (m-2-3)
    (m-1-4) -- (m-2-4)
    (m-1-2) -- (m-2-2);
  \end{tikzpicture}
\end{displaymath}
\end{proof}
\begin{corollary}
\label{cor:434}
We have an isomorphism 
\begin{displaymath}
\rho^{-1}(\pi^\star : \mc{X}^\star \rightarrow S,
\underline{D}^\star) \simeq \prod_{i=1}^n [\mbb{V}_S(\mc{N}_{D^\star_i})/\bG_m] 
\end{displaymath}
\end{corollary}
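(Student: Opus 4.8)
The plan is to deduce the statement directly from Proposition \ref{prop:fiberBM}, which already identifies the fibre of $\rho$ with the stack $\mc{F}$ whose $T$-objects (for $f : T \to S$ in $\SchS$) are tuples of morphisms $(\alpha_i : \ms{R}_i \to \mc{N}_{D_i^\star})_{i=1}^n$ of invertible sheaves, with morphisms given by componentwise isomorphisms of the $\ms{R}_i$ compatible with the $\alpha_i$. All the geometric content — recovering the bubbly curve as $\bP(\sE_\alpha)$ together with the Yoneda and adjunction bookkeeping — is packaged there, so what remains is the purely stack-theoretic translation of $\mc{F}$ into a quotient by $\bG_m$.

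First I would record the product decomposition. Since both the objects $(\alpha_i)_i$ and the morphisms between them are given index by index with no interaction across distinct $i$, the stack $\mc{F}$ is canonically the fibre product over $\SchS$ of the $n$ stacks $\mc{F}_i$ classifying a single pair $(\ms{R}_i, \alpha_i : \ms{R}_i \to \mc{N}_{D_i^\star})$. This fibre product over $S$ is exactly the product appearing in the statement, so it suffices to treat one factor, i.e. to produce an isomorphism $\mc{F}_i \simeq [\mbb{V}_S(\mc{N}_{D_i^\star})/\bG_m]$.

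The core step is then the standard dictionary for quotients of a line bundle by the scaling $\bG_m$. Writing $\mc{L} = \mc{N}_{D_i^\star}$, a $T$-point of $[\mbb{V}_S(\mc{L})/\bG_m]$ is a $\bG_m$-torsor $P \to T$ together with a $\bG_m$-equivariant $S$-morphism $P \to \mbb{V}_S(\mc{L})$. I would turn the torsor $P$ into its associated invertible sheaf $\ms{R}$ and, by faithfully flat descent along $P \to T$, turn the equivariant map into an $\sO_T$-linear morphism $\alpha : \ms{R} \to f^*\mc{L}$, the weight of $\bG_m$ being precisely absorbed by the twist by $\ms{R}$. Conversely a pair $(\ms{R}, \alpha)$ recovers the frame torsor and the equivariant map, and one checks that these two assignments are mutually quasi-inverse and match morphisms, including automorphism groups: over a fixed $(\ms{R}, \alpha)$ the automorphisms are the units $u \in \sO_T^\times$ with $\alpha u = \alpha$, which is all of $\bG_m$ when $\alpha = 0$ and trivial when $\alpha$ is an isomorphism, in agreement with the stabilisers of the scaling action on $\mbb{V}_S(\mc{L})$.

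The only genuinely delicate point, and the one I would carry out with care, is fixing the normalisation of $\mbb{V}_S(\mc{L})$ (total space of $\mc{L}$ versus of $\mc{L}^\vee$) together with the sign of the $\bG_m$-weight, so that the universal object is a morphism $\ms{R} \to \mc{N}_{D_i^\star}$ \emph{into} the fixed normal bundle rather than its dual. This is pure bookkeeping — it is the well-known description of $[\bA^1/\bG_m]$ as classifying an invertible sheaf equipped with a (co)section — but the variance must be matched to the one already forced by Lemma \ref{lem:fonctcontraction}, where $\ms{R}_i = \rho_*\mc{N}_{D_i}$ and $\alpha_i$ is an isomorphism exactly when no bubble carries $p_i$ and vanishes when a bubble is present (the locus with full $\bG_m$-stabiliser). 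Once this convention is pinned down, the isomorphism of the statement follows by reassembling the $n$ factors.
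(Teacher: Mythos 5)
Your proposal is correct and follows the same route as the paper: the paper's proof of this corollary is the one-line remark that Proposition \ref{prop:fiberBM} \emph{is} the definition of the right-hand side, and your argument simply spells out the standard torsor-to-invertible-sheaf dictionary (and the index-by-index product decomposition) that the paper leaves implicit. The variance/weight bookkeeping you flag is exactly the point the paper glosses over, and your treatment of it is consistent with Lemma \ref{lem:fonctcontraction}.
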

\begin{proof}
  Proposition \ref{prop:fiberBM} is the definition of the right-hand side of this isomorphism.
\end{proof} 
\begin{proposition}
\label{lem:retraductionproduitfibre}
The fiber of $\varkappa$ over $(\pi^\star : \mc{X}^\star\rightarrow S, \underline{D}^\star)$ is isomorphic to the stack whose
\begin{enumerate}
\item
objects over $T\in \SchS$ are collections of couples 
\begin{displaymath}
\Big(\alpha_i : \ms{R}_i \rightarrow \mc{N}_{D_T^\star}, \varrho_{i} \in
  \Hi^0\Big(T, \frac{\Sym^{k_i}(\sE_{\alpha_i})}{\sO_{\mc{X}_T^\star}\Delta_i^{k_i}}
  \Big) \Big)_{i=1}^n     
\end{displaymath}
where $\alpha_i \in \Hom_{\sO_T}(\ms{R}_i,\mc{N}_{D_T^\star})$ and $\ms{R}_i$ are $\sO_T$-invertible sheaves, $\Delta_i$ is the global section defined by the injection $\sO_{\mc{X}_T^\star} \rightarrow \sE_{\alpha_i}$ and the image of $\varrho_{i}$ trivializes 
 \begin{displaymath}
\frac{\Sym^{k_i}(\sE_{\alpha_i})}{\sO\Delta_i^{k_i}}\left\slash
\Delta_i\frac{\Sym^{{k_i}-1}(\sE_{\alpha_i})}{\sO\Delta_i^{{k_i}-1}}\right.   
\simeq \left(\frac{\sE_{\alpha_i}}{\sO\Delta_i}\right)^{\otimes {k_i}} = \mc{N}_{D_{\alpha_i}}^{\otimes k_i}. 
\end{displaymath}   
\item
morphisms from $(\alpha_i, \varrho_i)$ on $(\beta_i, \varepsilon_i)$ over $f \in \Hom_S(T, T')$ are collections of morphisms $\mu = (\mu_i)_i$ from $(\alpha_i)_i$ on $(\beta_i)_i$ in the sense of proposition \ref{prop:fiberBM} such that the induced morphisms $\Sym^k(\hat{\mu}_i) : \Sym^k(\sE_{\alpha_i})
\rightarrow f^*\Sym^k(\sE_{\beta_i})$ send $\varrho_{i}$ on $f^*\varepsilon_{i}$.  
\end{enumerate}
\end{proposition}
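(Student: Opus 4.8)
The plan is to upgrade proposition \ref{prop:fiberBM} by recording, on top of the bubbly-curve datum $\alpha$, the polar part carried by the curve $\mc{X}_\alpha = \bP(\sE_\alpha)$ reconstructed there. As in the proof of \ref{prop:fiberBM} I would reduce to the $1$-marked case: the objects are collections indexed by the marked points, and the isomorphism \ref{eq:isoprodfibre} lets one treat the $n$ marked points independently. So it suffices to describe the fiber of $\varkappa$ in the $1$-marked case and to match a single polar part of order $k$ against a datum $(\alpha : \ms{R}\to\mc{N}_{D^\star}, \varrho)$.

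First I would make the sheaf of polar parts on the bubbly curve explicit. Keeping the notation of \ref{prop:fiberBM}, the reconstruction there gives $\mc{X}_\alpha = \bP(\sE_\alpha)$ together with $\rho^*\sE_\alpha = \sO_{\mc{X}_\alpha}(D_\alpha) = \sO_{\bP(\sE_\alpha)}(1)$ and the canonical injection $\Delta : \sO_{\mc{X}_\alpha}\to\rho^*\sE_\alpha$ cutting out $D_\alpha$. Tensoring the injection into $\sO_{\mc{X}_\alpha}\xrightarrow{\Delta^k}\sO_{\mc{X}_\alpha}(kD_\alpha)=(\rho^*\sE_\alpha)^{\otimes k}$ and pushing forward along $\rho$, the vanishing $\Ri^1\rho_*\sO_{\mc{X}_\alpha}=0$ recalled before \ref{nota:strctcone} together with $\rho_*\sO_{\mc{X}_\alpha}=\sO_{\mc{X}^\star}$ yield
\begin{displaymath}
  \rho_*\big(\sO_{\mc{X}_\alpha}(kD_\alpha)/\sO_{\mc{X}_\alpha}\big) \simeq \rho_*\sO_{\bP(\sE_\alpha)}(k)\big/\sO_{\mc{X}^\star}\Delta^k.
\end{displaymath}
The heart of the matter is the identification $\rho_*\sO_{\bP(\sE_\alpha)}(k)\simeq\Sym^k\sE_\alpha$, compatibly with base change; granting it and pushing down to $T$ by $\pi^\star$ identifies the sheaf of polar parts $\sP_{\mc{X}_\alpha, k}$ with $\Sym^k\sE_\alpha/\sO\Delta^k$, so that a polar part becomes a section $\varrho\in\Hi^0(T, \Sym^k\sE_\alpha/\sO\Delta^k)$.

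Next I would translate the order condition. The filtration of $\Sym^k\sE_\alpha$ by the powers of $\Delta$ is exactly the order filtration on polar parts, and from the exact sequence $0\to\sO\Delta\to\sE_\alpha\to\mc{N}_{D_\alpha}\to 0$ the top graded quotient
\begin{displaymath}
  \frac{\Sym^k\sE_\alpha/\sO\Delta^k}{\Delta\,(\Sym^{k-1}\sE_\alpha/\sO\Delta^{k-1})}\simeq (\sE_\alpha/\sO\Delta)^{\otimes k}=\mc{N}_{D_\alpha}^{\otimes k}
\end{displaymath}
is canonically the target $\sL^{-k}=\mc{N}_{D_\alpha}^{\otimes k}$ of the leading-coefficient map $\vartheta_{\mc{X}_\alpha, k}$. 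Thus ``$\varrho$ is of order exactly $k$'', i.e.\ $\vartheta_{\mc{X}_\alpha, k}(\varrho)$ trivializes $\sL^{-k}$, becomes precisely the trivialization condition in the statement, which matches the objects. For morphisms I would argue as in \ref{prop:fiberBM}: a morphism of polar curves induces, on underlying bubbly curves, the isomorphism $\mu$ of \ref{prop:fiberBM}, hence $\Sym^k(\hat\mu) : \Sym^k\sE_\alpha\to f^*\Sym^k\sE_\beta$, and the condition that it carries $\varrho$ to $f^*\varepsilon$ is the required compatibility; conversely such data assemble into a morphism of polar curves by the full faithfulness already established in \ref{prop:fiberBM}. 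Reassembling the marked points through \ref{eq:isoprodfibre} then yields the stated description.

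I expect the single genuine obstacle to be the isomorphism $\rho_*\sO_{\bP(\sE_\alpha)}(k)\simeq\Sym^k\sE_\alpha$ and its compatibility with base change. The difficulty is that $\sE_\alpha$ is \emph{not} locally free: its rank jumps from $1$ to $2$ along $D^\star$ and it acquires torsion exactly where $\alpha$ degenerates, so the formula cannot be quoted from the locally free case. I would settle it by a local computation around the bubble: where $\alpha$ is an isomorphism $\sE_\alpha$ is invertible and the claim is trivial, while near a degeneration one presents $\sE_\alpha$ as the extension $0\to\sO\to\sE_\alpha\to\sigma_*\ms{R}\to 0$, computes $\Proj$ of the associated symmetric algebra, and checks that $\Ri^1\rho_*\sO_{\mc{X}_\alpha}=0$ forces the natural map $\Sym^k\sE_\alpha\to\rho_*\sO_{\bP(\sE_\alpha)}(k)$ to be an isomorphism in every degree $k\geq 1$ and to commute with base change by cohomology and base change along the $\bP^1$-bubble. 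This is the step requiring real care; everything else is the formal bookkeeping of \ref{prop:fiberBM} carried up to order $k$.
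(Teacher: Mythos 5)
Your proposal is correct and follows essentially the same route as the paper: reduce to the $1$-marked case via \ref{eq:isoprodfibre}, identify polar parts on $\bP(\sE_\alpha)$ with sections of $\Sym^k(\sE_\alpha)/\sO\Delta^k$ through $\rho_*\sO_{\bP(\sE_\alpha)}(kD_\alpha)=\Sym^k(\sE_\alpha)$, read off the order-$k$ condition on the top graded quotient $\mc{N}_{D_\alpha}^{\otimes k}$, and transport morphisms through the full faithfulness already established in proposition \ref{prop:fiberBM}. The identification $\rho_*\sO_{\bP(\sE_\alpha)}(k)\simeq\Sym^k\sE_\alpha$ that you single out as the genuine obstacle is asserted in one line in the paper's proof and is in effect only verified by the explicit local presentation of lemma \ref{lem:localEalpha} and the flatness computation in proposition \ref{prop:localfinal}, so your insistence on checking it (and its base-change compatibility) by a local computation near the degeneration locus of $\alpha$ is a legitimate and welcome tightening rather than a departure.
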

\begin{proof}
Following isomorphism \ref{eq:isoprodfibre} it is enough to treat the $1$-marked case. Write $V = \mbb{V}_S(\mc{N}_{D^\star})$ and $v$ for the standard atlas $V \rightarrow [V/\bG_m]$. Following proposition \ref{prop:fiberBM} and corollary \ref{cor:434} an $S$-morphism $F: T \rightarrow [V/\bG_m]$ corresponds to an $\sO_S$-morphism 
\begin{displaymath}
\begin{tikzpicture}[baseline=(current  bounding  box.center),>=latex]
    \matrix (m) 
    [matrix of math nodes, row sep=2.5em, column sep=2em, text height=1ex, text depth=0.25ex] 
    { \alpha_F : \mc{N}_{D^\star}\otimes \sO_T & \mc{N}_{D^\star} \otimes \sO_T.\\};  
    \path[->,font=\scriptsize]  
    (m-1-1) edge (m-1-2);
  \end{tikzpicture}
\end{displaymath}
 It is straightforward to see that $\fP_{g,k}\times_{\fB_{g,n}}\left[V/\bG_m\right]$ is isomorphic to the stack having objects over $T \in \SchS$  
\begin{displaymath}
  \big( \alpha : \ms{R} \rightarrow \mc{N}_{D^\star}\otimes \sO_T,\,\, \varrho \text{ a polar part of order $k$ on $\bP(\sE_{v})$}\big) 
\end{displaymath}
and morphisms from $(\alpha: \ms{R} \rightarrow \mc{N}_{D^\star}\otimes \sO_T, \varrho)$ on $(\beta : \ms{S} \rightarrow \mc{N}_{D^\star}\otimes \sO_{T'},\,\, \varepsilon)$ over $f : T\rightarrow T'$ corresponding to the $(\mc{X}^\star, D^\star)$-morphisms 
\begin{displaymath}
\begin{tikzpicture}[baseline=(current  bounding  box.center),>=latex]
    \matrix (m) 
    [matrix of math nodes, row sep=2.5em, column sep=2em, text height=1ex, text depth=0.25ex] 
    { \phi : (\bP(\sE_{\alpha}), D_{\alpha}, (T\rightarrow S)^*\varrho) & (\bP(\sE_{\beta}), D_{\beta},(T'\rightarrow S)^*\varepsilon).\\};  
    \path[->,font=\scriptsize]  
    (m-1-1) edge (m-1-2);
  \end{tikzpicture}
\end{displaymath}
 Now any such isomorphism $\phi$ can be recovered from the isomorphism $\mu_{\phi} : \alpha \simeq f^*\beta$ attached to $\phi$ and described in the proof of proposition \ref{prop:fiberBM}. The morphism $\mu_{\phi}$ defines by functoriality of Yoneda extensions an isomorphism $\hat{\mu}_{\phi} : \sE_{\alpha} \simeq f^*\sE_{\beta}$ that is equal to the one induced by $\phi$. Since $\sO_{\bP(\sE_{\alpha})}(D_{\alpha}) = \sO_{\bP(\sE_{\alpha})}(1)$ we have that $\rho_*\sO_{\bP(\sE_{\alpha})}(kD_{\alpha}) = \Sym^k(\sE_{\alpha})$ and equally for $\beta$. This means that $\Sym^k(\hat{\mu}_\phi)$ is the isomorphism 
\begin{displaymath}
\rho_*\sO_{\bP(\sE_{\alpha})}(kD_{\alpha})\simeq \rho_*\sO_{\bP(f^*\sE_{\beta})}(kD_{\beta})
\end{displaymath}
 induced by $\phi$ and that the space of polar parts on $\bP(\sE_{\alpha})$ is equal to $\Sym^k(\sE_{\alpha})/\sO_{\mc{X}_T^\star}\Delta^k$. Finally $\phi$ acts on polar parts by the induced morphism on quotients   
\begin{displaymath}
  \underline{\Sym^k}(\hat{\mu}_\phi) : \frac{\Sym^k(\sE_{\alpha})}{\sO_{{\mc{X}_T^\star}}\Delta^k} \simeq \frac{\Sym^k(f^*\sE_{\beta})}{\sO_{{\mc{X}_T^\star}}\Delta^k}.
\end{displaymath}
The only thing left to check is that the ``of order $k$'' condition corresponds to (1), but this is clear since 
\begin{displaymath}
\left(\frac{\sE_\alpha}{\sO\Delta}\right)^{\otimes k} = \frac{\Sym^k(\sE_\alpha)}{\sO\Delta^k}\left\slash
  {\Delta\frac{\Sym^{k-1}(\sE_\alpha)}{\sO\Delta^{k-1}}} \right. \simeq \frac{\sO_{\bP(\sE_{\alpha})}(kD_\alpha)}{\sO_{\bP(\sE_{\alpha})}}\left\slash \frac{\sO_{\bP(\sE_{\alpha})}((k-1)D_\alpha)}{\sO_{\bP(\sE_{\alpha})}}\right.= \mc{N}_{D_\alpha}^{k}. 
\end{displaymath}   
\end{proof}
We are going to explicit the implicit local picture in proposition \ref{lem:retraductionproduitfibre}. We start by giving a local presentation of $\sE_{\alpha}$. Assume $S=\Spec(\bs{A})$ for a local $\bbk$-algebra $\bs{A}$ and let $t$ be a local equation of $D^\star$ in a neighborhood $U=\Spec(\bs{B})$ of a closed point in the support of $D^\star$. Since $D^\star$ is \'etale on $S$ we identify $\bs{B}/t\bs{B}$ with $\bs{A}$ and write $c_0(-)$ the quotient map $\bs{B} \twoheadrightarrow \bs{A}$.
\begin{lemma}
  \label{lem:localEalpha}
  The restriction $\bs{E}_\alpha$ of $\sE_\alpha$ to $V$ has a presentation of the form 
\begin{displaymath}
\begin{tikzpicture}[baseline=(current  bounding  box.center),>=latex]
    \matrix (m) 
    [matrix of math nodes, row sep=2.5em, column sep=2em, text height=1ex, text depth=0.25ex] 
    {   0  & \bs{B} & \bs{B} \oplus \bs{B} & \bs{E}_\alpha  & 0\\}; 
    \path[->,font=\scriptsize]  
    (m-1-1) edge (m-1-2)
    (m-1-2) edge node[above] {$(-\alpha, t)$} (m-1-3)
    (m-1-3) edge node[above] {$\varphi$} (m-1-4)
    (m-1-4) edge (m-1-5);
  \end{tikzpicture}
\end{displaymath}
 where $\varphi(b, b') = (tb+\alpha b', c_0(b'))$. In particular, $\Sym_{\bs{B}}\bs{E}_\alpha = \frac{\bs{B}[Z_1, Z_2]}{(-\alpha Z_1 + t Z_2)}$.
\end{lemma}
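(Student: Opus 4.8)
The plan is to do everything by hand on the affine chart $U=\Spec(\bs{B})$, reducing to the $1$-marked situation as the statement already does. The first step is to fix local models for the three sheaves entering the Yoneda pullback $\sE_\alpha = \sO_{\mc{X}^\star}(D^\star)\times_{\mc{N}_{D^\star}}\ms{R}$ of Notation \ref{nota:strctcone}. Taking $\tfrac1t$ as local basis trivialises $\sO_{\mc{X}^\star}(D^\star)$ as the free module $\bs{B}$; in this basis the canonical inclusion $\sO_{\mc{X}^\star}\hookrightarrow\sO_{\mc{X}^\star}(D^\star)$ is multiplication by $t$, and the quotient $\mc{N}_{D^\star}=\sO_{\mc{X}^\star}(D^\star)/\sO_{\mc{X}^\star}$ is identified with $\bs{B}/t\bs{B}=\bs{A}$ through $c_0$. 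The one point to get right is that $\ms{R}$ (being $\rho_*\mc{N}_D$, supported on $D^\star\simeq S$) is invertible \emph{as a sheaf on} $D^\star$, hence is locally the free $\sO_{D^\star}$-module $\bs{A}$ and not $\bs{B}$, and that $\alpha$ is then multiplication by the reduction $c_0(\alpha)$ of a chosen lift $\alpha\in\bs{B}$.

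With this dictionary the fibre product is the explicit $\bs{B}$-module
\begin{displaymath}
\bs{E}_\alpha=\bigl\{(x,r)\in\bs{B}\oplus\bs{A}\ :\ c_0(x)=c_0(\alpha)\,r\bigr\}.
\end{displaymath}
The next step is to produce the two natural generators. The section $\Delta$, image of $1$ under $\sO_{\mc{X}^\star}\hookrightarrow\sE_\alpha$, becomes $(t,0)$ in these coordinates, while a lift of a generator of $\ms{R}$ is $(\alpha,1)$. This gives the map $\varphi\colon\bs{B}^{\oplus2}\to\bs{E}_\alpha$ sending $(b,b')$ to $b(t,0)+b'(\alpha,1)=(tb+\alpha b',c_0(b'))$, which is exactly the formula in the statement. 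Surjectivity is immediate: given $(x,r)\in\bs{E}_\alpha$ one lifts $r$ to some $b'\in\bs{B}$ and writes $x-\alpha b'=tb$, which is solvable precisely because $c_0(x)=c_0(\alpha)\,r$.

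It then remains to identify $\Ker\varphi$. Imposing $tb+\alpha b'=0$ and $c_0(b')=0$, one writes $b'=tc$ and cancels $t$ to obtain $b=-\alpha c$, so that $(b,b')=c\,(-\alpha,t)$; the kernel is therefore the image of $(-\alpha,t)$, giving the asserted short exact sequence. The only hypothesis used here is that $t$ is a nonzerodivisor of $\bs{B}$, which holds because $D^\star$ is an effective Cartier divisor \'etale over $S$. The ``in particular'' is then formal, since the symmetric-algebra functor is right exact: it turns the presentation $\bs{B}\xrightarrow{(-\alpha,t)}\bs{B}^{\oplus2}\to\bs{E}_\alpha\to0$ into $\Sym_{\bs{B}}\bs{E}_\alpha=\bs{B}[Z_1,Z_2]/(-\alpha Z_1+t Z_2)$, the single linear relation being read off from the column of the presenting matrix (and one recovers $\Delta=Z_1$, consistent with the use of $\Delta$ in Proposition \ref{lem:retraductionproduitfibre}).

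I do not expect a serious obstacle: once the local identifications are in place the argument is a one-line cokernel computation. The step I would be most careful about is precisely that bookkeeping---treating $\ms{R}$ as an invertible module on $D^\star$ (locally $\bs{A}$, not $\bs{B}$) and using ``multiplication by $t$'' for $\sO_{\mc{X}^\star}\hookrightarrow\sO_{\mc{X}^\star}(D^\star)$. Getting either convention wrong alters the second coordinate of $\varphi$, collapses the kernel, and breaks the match with both the stated presentation and the symmetric-algebra formula.
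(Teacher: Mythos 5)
Your proof is correct and follows essentially the same route as the paper's: both compute the fibre product explicitly as $\{(x,r)\in\bs{B}\times\bs{A}\mid c_0(x)=\alpha r\}$, exhibit the generators $(t,0)$ and $(\alpha,1)$, verify surjectivity by lifting and using the compatibility condition, and identify the kernel as $\bs{B}(-\alpha,t)$ by factoring out $t$. Your added remarks --- that $t$ must be a nonzerodivisor for the cancellation, and that the symmetric-algebra formula follows from right exactness of $\Sym$ --- are sound and merely make explicit what the paper leaves implicit.
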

\begin{proof}
Locally $\alpha$ is just an element of $\bs{A}$ and $\bs{E}_\alpha =
\bs{B}t^{-1}\times_{\bs{B}}\bs{A}$ is the set  
\begin{displaymath}
  \bs{E}_\alpha = \{(bt^{-1}, a) \in \bs{B}t^{-1} \times \bs{A}\mid c_0(b) = \alpha a\} \simeq \{ (b, a) \in \bs{B} \times \bs{A}\mid c_0(b) = \alpha a\}.
\end{displaymath}
An element $(b,a) \in \bs{E}_{\alpha}$ is of the form $(tb'+\alpha a, a)$ for $b' \in B$. This shows $\varphi$ is onto and sends $Z_1=(1, 0)$ (resp. $Z_2=(0,1)$) on $(t, 0)$ (resp. $(\alpha, 1)$). The kernel of $\varphi$ is composed of couples $(b,b')$ such that $b'=tb''$ and $tb+\alpha tb'' =0$ thus $b = -\alpha b''$. Finally $\Ker(\varphi) = \bs{B}(-\alpha, t)$. 
\end{proof}
\begin{proposition}
  \label{prop:localfinal}
  In the case $S$ is the spectrum of a local $\bbk$-algebra $\bs{A}$ the fiber of $\varkappa$ over $(\pi^\star: \mc{X}^\star \rightarrow S, \underline{D})$ is isomorphic to the quotient stack 
  \begin{displaymath}
    \prod_{i=1}^n[\bA_{\bs{A}}^{k_i}/\bs{\mu}_{k_i}] \rightarrow \bs{A}
  \end{displaymath}
  where $\bs{\mu}_{k_i}$ acts on $\bA_{\bs{A}}^{k_i}$ with weights $(1, 1, 2, \ldots, k_i-1)$.
\end{proposition}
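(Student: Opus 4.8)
The plan is to reduce to the $1$-marked case and then realise the fiber of $\varkappa$ as a global quotient, computing everything over the local ring $\bs{A}$, where all the sheaves involved trivialise. By the product decomposition \ref{eq:isoprodfibre} the fiber of $\varkappa$ over $(\mc{X}^\star, \underline{D}^\star)$ is the product over $i$ of the fibers of the $1$-marked maps, so it suffices to treat a single marked point and a single integer $k$. Since $\varkappa$ factors as the forgetful morphism $\fP_{g,k}\to\fB_{g,1}$ followed by $\rho$, and the fiber of $\rho$ is $[\mbb{V}_S(\mc{N}_{D^\star})/\bG_m]$ by Corollary \ref{cor:434}, I would base change along the atlas $V = \mbb{V}_S(\mc{N}_{D^\star})\to[V/\bG_m]$, so that the fiber of $\varkappa$ becomes $[W/\bG_m]$ with $W = \fP_{g,k}\times_{\fB_{g,1}}V$. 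Over $V$ the sheaf $\ms{R}$ is canonically trivialised and $\alpha$ becomes the tautological section of $\mc{N}_{D^\star}$; Proposition \ref{lem:retraductionproduitfibre} then identifies $W$ with the space of pairs $(\alpha, \varrho)$, where $\varrho$ is an order-$k$ polar part on $\bP(\sE_\alpha)$.

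Next I would make $W$ explicit using the presentation $\Sym_{\bs{B}}\bs{E}_\alpha = \bs{B}[Z_1, Z_2]/(-\alpha Z_1 + tZ_2)$ of Lemma \ref{lem:localEalpha}. The canonical injection $\sO_{\mc{X}^\star}\to\sE_\alpha$ carries $1$ to $Z_1$, so $\Delta = Z_1$ and the polar-part sheaf is $\Sym^k\bs{E}_\alpha/\bs{B}Z_1^k$. Taking sections over $T$ amounts to restricting $\sO_{\bP(\sE_\alpha)}(kD_\alpha)/\sO$ to the section $D_\alpha$, which is locally free of rank $k$ with graded pieces $\mc{N}_{D_\alpha}^{\otimes j}$, $j = 1, \ldots, k$; the coordinate $t$ splits this filtration. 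Hence a polar part is given freely by coefficients $a_1, \ldots, a_k$ with $a_j$ a section of $\mc{N}_{D_\alpha}^{\otimes j}$, the order-$k$ condition of Proposition \ref{lem:retraductionproduitfibre} being exactly that the leading coefficient $a_k$ trivialises $\mc{N}_{D_\alpha}^{\otimes k}$. This exhibits $W\cong\bA^k_{\bs{A}}\times\bG_m$ with coordinates $(\alpha, a_1, \ldots, a_{k-1}; a_k)$ and $a_k$ invertible.

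The heart of the argument, and the step I expect to be most delicate, is the weight computation together with the reduction of the structure group. Since $\mc{N}_{D_\alpha}$ is the quotient $\ms{R}$ in $0\to\sO\to\sE_\alpha\to\ms{R}\to 0$ while $\alpha$ is a section of $\mc{N}_{D^\star}\otimes\ms{R}^{-1}$, the group $\bG_m = \Aut(\ms{R})$ acts with weight $1$ on $\alpha$ and weight $-j$ on $a_j$; the consistency check is that the honest coefficient $\alpha^j a_j$ lies in $\mc{N}_{D^\star}^{\otimes j}$ and is therefore invariant. Using the invertible coordinate $a_k$, which has weight $-k$, the equivariant map $a_k\colon W\to\bG_m$ exhibits $[W/\bG_m]\cong[\{a_k = 1\}/\bs{\mu}_k]$, with $\{a_k = 1\} = \bA^k_{\bs{A}}$ in the coordinates $(\alpha, a_1, \ldots, a_{k-1})$. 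The residual $\bs{\mu}_k$ then acts with weights $(1, -1, -2, \ldots, -(k-1))$, and reducing modulo $k$ replaces each $-j$ by $k-j$, turning the multiset into $\{1\}\cup\{1, 2, \ldots, k-1\} = (1, 1, 2, \ldots, k-1)$, as claimed. The two subtle points are fixing the sign of the $\bG_m$-weight on $\alpha$ so that it contributes the extra weight $1$ (rather than $k-1$) after reduction, and checking that the $a_j$ remain free coordinates across the bubbling locus $\alpha = 0$; both are controlled by working over the local base $\bs{A}$ with the fixed coordinate $t$, where the filtration on the polar-part sheaf splits and $W$ is literally an affine space fibered over $\bG_m$.
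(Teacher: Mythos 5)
Your proposal is correct and follows essentially the same route as the paper: reduce to the $1$-marked case via \ref{eq:isoprodfibre}, pull back along the atlas $V\rightarrow [V/\bG_m]$, use proposition \ref{lem:retraductionproduitfibre} and lemma \ref{lem:localEalpha} to trivialize the polar-part sheaf into coordinates $(\alpha, a_1,\ldots,a_k)$ with $a_k$ invertible, and then reduce $\bG_m$ to $\bs{\mu}_k$ along the invertible weight-$k$ coordinate $a_k$. The only differences are cosmetic: you carry out the slice $\{a_k=1\}$ by hand where the paper passes through the weighted projective stack $\bP_{\bs{A}}(-1,k,k-1,\ldots,1)$ and quotes the standard isomorphism, and your sign convention for the $\bG_m$-action is the inverse of the paper's, which changes nothing in the resulting quotient.
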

\begin{proof}
  Write $V=\mbb{V}_S(\mc{N}_{D^\star})$ and $v$ for the standard atlas $V \rightarrow \left[V/\bG_m\right]$. In this local case $V \simeq \bA_{\bs{A}}^1$ and $\alpha_v$ is an element in $\bs{A}$. The diagram \ref{diagmu} corresponds to 
\begin{displaymath}
\begin{tikzpicture}[baseline=(current  bounding  box.center),>=latex]
    \matrix (m) 
    [matrix of math nodes, row sep=2.5em, column sep=2em, text
    height=1ex, text depth=0.25ex]  
    {   0  & \bs{B} & \bs{E}_v = \bs{B}t^{-1}\times_v \bs{A}
      & \mc{N}_{D_v}=\bs{A}  & 0\\
        0  & \bs{B} &
        \bs{B}t^{-1} &
        \frac{\bs{B}t^{-1}}{\bs{B}} = \bs{A} & 0.\\};  
    \path[->,font=\scriptsize]  
    (m-1-1) edge (m-1-2)
    (m-1-2) edge node[above] {$(t, 0)$} (m-1-3)
    (m-1-3) edge (m-1-4)
    (m-1-4) edge (m-1-5)
    (m-2-1) edge (m-2-2)
    (m-2-2) edge (m-2-3)
    (m-2-3) edge (m-2-4)
    (m-2-4) edge (m-2-5)
    (m-1-3) edge (m-2-3)
    (m-1-4) edge node[auto] {$\alpha_v$} (m-2-4);
    \draw[double, double distance=1.5 pt, font=\scriptsize]
    (m-1-2) -- (m-2-2);
  \end{tikzpicture}
\end{displaymath}
Let $Z_1=(t,0)$ and $Z_2 =(v, 1)$ be the elements of $\bs{E}_v$ given in lemma \ref{lem:localEalpha}. The filtration by order on the space of polar parts of order at most $k$ along $D_v$ corresponds to
\begin{displaymath}
  Z_1^{k-1}\frac{\bs{E}_v}{\bs{B}\cdot Z_1} \subset \ldots \subset
  Z_1^{\ell}\frac{\Sym^{k-\ell}\bs{E}_v}{\bs{B}\cdot Z_1^{k-\ell}} \subset
  \ldots \subset \frac{\Sym^k(\bs{E}_v)}{\bs{B}\cdot Z_1^k}.
\end{displaymath}
In particular
\begin{displaymath}
\forall \ell \in \{1, \ldots, k\}, \quad \left(\mc{N}_{\cD_v}\right)^{\otimes k-\ell} = Z_1^{\ell}\frac{\Sym^{k-\ell}\bs{E}_v}{\bs{B}\cdot
    Z_1^{k-\ell}}\left\slash Z_1^{\ell+1}\frac{\Sym^{k-\ell-1}\bs{E}_v}{\bs{B}\cdot Z_1^{k-\ell-1}}\right. = A\cdot Z_1^{\ell}Z_2^{k-\ell}.  
\end{displaymath}
Let's show that we have an $\bs{A}$-isomorphism 
\begin{equation}
\label{eq:isolocalEv}
\frac{\Sym^k(\bs{E}_v)}{\bs{B} \Delta^k} \simeq \bigoplus_{\ell =0}^{k-1} \mc{N}_{D_v}^{\otimes k-\ell}.
\end{equation}
We have to show that $Z_1^\ell Z_2^{k-\ell}$ for $\ell \neq k$ gives an $\bs{A}$-basis of the left-hand side. Following lemma \ref{lem:localEalpha} and since $\bs{B}\Delta$ is locally generated by $Z_1$ we get that 
\begin{displaymath}
\frac{\Sym^k(\bs{E}_v)}{\sO\Delta^k} = \frac{\bs{B}[Z_1, Z_2]_k}{\bs{B}Z_1^k + \bs{B}[Z_1, Z_2]_{k-1}(-vZ_1 + tZ_2)}.
\end{displaymath}
Now the right-hand side is generated by $Z_1^\ell Z_2^{k-\ell}$ for $\ell \neq k$ over $\bs{B}$. Since $vZ_1 = tZ_2$ the monomials $Z_1^\ell Z_2^{k-\ell}$ generate $\Sym^k(\bs{E}_v)/\bs{B}\Delta^k$ over $\bs{A}$. The latter is a flat $\bs{A}$-module of rank $k$ and $\{Z_1^\ell Z_2^{k-\ell}\}$ is therefore an $\bs{A}$-basis of $\Sym^k(\bs{E}_v)/\bs{B}\Delta^k$. 

Following proposition \ref{lem:retraductionproduitfibre} the fiber of $\rho$ over $(\pi^\star: \mc{X}^\star \rightarrow S, D^\star)$ is the stack whose 
\begin{enumerate}
\item objects are given by sections $(\alpha, a_k, \ldots, a_1) \in \mc{N}_{D_v}^{-1} \oplus \bigoplus_{\ell = 0}^{k-1}\mc{N}_{D_v}^{k-\ell}$ with $a_k \in \bs{A}^*$.
\item morphisms from $(\alpha, a_k, \ldots, a_1)$ on $(\alpha', a_k', \ldots,a_1')$ are each given by an element in $\bG_m$ acting diagonally with weight $j$ on an element in $\mc{N}_{D_v}^j$ and sending $(v,a_i)$ on $(v', a_i')$.
\end{enumerate}
Since $\mc{N}_{D_v}$ is isomorphic to $\bs{A}$ this means the previous fiber is the open substack of the weighted projective stack 
\begin{displaymath}
  \bP_{\bs{A}}(-1, k, k-1, \ldots, 1) = [\bA_{\bs{A}}^{k+1}/\bG_m]
\end{displaymath}
given by $a_k \neq 0$. This is known to be isomorphic to the quotient stack $[\bA_{\bs{A}}^k/\bs{\mu}_k]$ where $\bs{\mu}_k$ acts with weights $(-1, k-1, \ldots, 1)$. But this substack is isomorphic to the same quotient stack but where $\bs{\mu}_k$ acts with weights $(1,1, 2, \ldots, k-1)$.   
\end{proof}

\subsection{Relation to twisted polar parts}

We show that the stack of twisted polar parts $\mc{C}_{\vk}$ constructed in section \ref{subsec:Toroidal} is isomorphic to $\fP_{g, \vk}$ in case $\bs{p}$ is $0$ or greater than $d$. Since the former's construction is of local nature the expected isomorphism is built locally. We shall keep the notation introduced at the end of the previous section, we assume for instance that $S$ is the spectrum of a local $\bbk$-algebra $\bs{A}$. For notation and details regarding the stack of twisted polar parts refer to section \ref{subsec:Toroidal}. It is enough to study the $1$-marked case.
\begin{lemma}
  The action of $\Diff_k$ on the set of polar parts at $D$ lifts to an action on the fiber of $\varkappa$ over $(\pi^\star : \mc{X}^\star \rightarrow S, D)$ acting trivially on the zero section and having isoptropy groups $\bs{\mu}_k$ away from it.
\end{lemma}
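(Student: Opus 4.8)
The plan is to reduce to the $1$-marked local situation and to build the lifted action by letting $\Diff_k$ change the formal coordinate at $D^\star$, testing its behaviour separately on the two strata of the fiber: the locus where no bubble occurs and the bubble locus. By the product decomposition \ref{eq:isoprodfibre} it suffices to treat the $1$-marked case, so I would put $S=\Spec\bs{A}$ and fix a local equation $t$ of $D^\star$; then by Lemma \ref{lem:localEalpha} and Proposition \ref{prop:localfinal} the fiber of $\varkappa$ is the open substack $a_k\neq 0$ of $\bP_{\bs{A}}(-1,k,k-1,\ldots,1)$ with homogeneous coordinates $(v,a_k,\ldots,a_1)$, where $v$ is the bubble parameter $\alpha$ and the $a_\ell$ are the coefficients of $\varrho$. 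On the open locus $v\neq 0$ the datum $\alpha$ is invertible and no bubble occurs, so a point is exactly a polar part of order $k$ at $D^\star$; this locus is therefore $\sP_k$ with the action of Section \ref{subsec:Toroidal}, which is transitive with stabiliser $\bs{\mu}_k$.

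Next I would define the action on the whole fiber. An element $f\in\Diff_k$ is a change of the coordinate $t$; writing $f=\theta_t(\lambda)\,u$ with $\lambda\in\bG_m$ and $u\in\mc{U}_k$, it acts on $\mc{N}_{D^\star}$, hence on $\sE_\alpha$ through the action on $t$ governed by the group law \ref{mucolaw}, and so on $\Sym^{k}(\sE_\alpha)$, on $\alpha$ and on $\varrho$. Functoriality of the symmetric powers and of the Yoneda pullback used in Propositions \ref{prop:fiberBM} and \ref{lem:retraductionproduitfibre}, together with \ref{mucolaw}, shows that this is a left action of $\Diff_k$ commuting with the torus used to form the quotient stack. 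This commutation is the exact analogue of the bi-equivariance (the choice $\sigma_-$) isolated in Section \ref{subsec:Toroidal}, and it is what lets the action pass to the $\bG_m$-quotient and hence to the $\bs{\mu}_k$-quotient.

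The two assertions then follow by reading off the coordinate formulas. Off the zero section, i.e. on $v\neq 0$, the action is the transitive action on $\sP_k$ recalled above, so every stabiliser equals $\bs{\mu}_k$. On the zero section $v=0$ the unipotent part of $f$ enters only through the conjugate $\theta_t(v)\,u\,\theta_t(v)^{-1}$, whose coordinates $v^{\ell}u_{\ell}$ are dictated by the weights in \ref{mucolaw}; these extend regularly across $v=0$ and vanish there, so $u$ acts as the identity on the zero section and $f$ acts only through its leading term $\lambda$, which is absorbed by the quotient torus. Hence $\Diff_k$ fixes every point of $\{v=0\}$ and the action is trivial on the zero section.

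The step I expect to be the main obstacle is precisely this last extension: since $\Diff_k$ is non-reductive and its maximal torus is not normal, one cannot descend the action by a naive quotient-by-a-normal-subgroup argument, and one must instead check directly that the coordinate-change action is regular across $v=0$ and degenerates there. Both facts rest on the positivity of the exponents in \ref{mucolaw}---equivalently on the bi-equivariant toroidal embedding attached to $\sigma_-$ rather than $\sigma_+$---and the vanishing $v^{\ell}u_{\ell}\to 0$ is the single computation that simultaneously produces the trivial action on the zero section and pins the isotropy at $\bs{\mu}_k$ on its complement.
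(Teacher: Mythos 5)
Your proposal follows the paper's own route in all essentials: reduce to the local $1$-marked case via \ref{eq:isoprodfibre}, identify the fibre of $\varkappa$ with the locus $a_k\in\bs{A}^*$ in the coordinates of proposition \ref{prop:localfinal}, recognise on $\{v\neq 0\}$ the transitive action on $\sP_k$ from section \ref{subsec:Toroidal} with stabilisers $\bs{\mu}_k$, and extend across $v=0$ by the observation that the unipotent coefficients are damped by powers of $v$. Your closing remark --- that $\mc{U}_k$ enters only through the conjugate $\theta_t(v)u\theta_t(v)^{-1}$, with coordinates $v^{\ell}u_\ell$ vanishing at $v=0$ --- is precisely the content of the paper's explicit formula \ref{eq:descriptionaction}: there $a_\ell^*=\sum_{r+s=k-\ell}a_r\alpha^{s}H_{k-r,s}(\underline{\alpha}_s)$, and since $H_{\ell,j}$ is weighted-homogeneous of degree $j$ one has $\alpha^{s}H_{k-r,s}(\underline{\alpha}_s)=H_{k-r,s}(\alpha\alpha_1,\ldots,\alpha^{s}\alpha_s)$, i.e.\ the unipotent part acts through its $\theta_t(\alpha)$-conjugate, which is the identity at $\alpha=0$. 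Two points deserve tightening. First, the middle paragraph's appeal to functoriality does not by itself produce the lifted action: $\mc{U}_k$ acts trivially on $\mc{N}_{D^\star}$ and hence on $\alpha$, so functoriality of $\Sym$ and of Yoneda pullback only yields the torus part; the action of $\mc{U}_k$ on a polar part carried by the bubble must be \emph{defined} as the regular extension of its action on the dense locus $v\neq 0$, which is exactly the coordinate computation you defer to your last paragraph and which the paper performs by introducing the polynomials $H_{\ell,j}$. Second, you should record that $a_k^*=a_k$ (immediate from $H_{\cdot,0}=1$), since this is what guarantees the extended formulas preserve the open condition $a_k\in\bs{A}^*$ cutting the fibre out of the weighted projective stack; the paper flags this explicitly as the reason the group genuinely acts on the fibre.
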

\begin{proof}
  Let $\bs{M}_t$ be the $\bs{A}$-module  
  \begin{displaymath}
    \bs{M}_t = \mc{N}_{D_v}^{-1} \oplus \bigoplus_{\ell = 0}^{k-1}\mc{N}_{D_v}^{k-\ell} \simeq \bs{A}Z_2^{-1}\oplus \bigoplus_{\ell=0}^{k-1} \bs{A}
    Z_1^{\ell}Z_2^{k-\ell}
  \end{displaymath}  
  from the proof of proposition \ref{prop:localfinal}. Objects of the fiber of $\varkappa$ are elements $(\alpha, a_k, \ldots, a_1)$ in $\bs{M}_t$ such that $a_k \in \bs{A}^*$. Write $\underline{Y}_j$ for the set of variables $Y_0 =1, Y_1, \ldots, Y_j$ and let $H_{\ell, j}(\underline{Y})$ be the $j$-coefficient of the formal series in $X$
  \begin{displaymath}
    \frac{1}{(\sum_{q \geq 0} Y_qX)^{\ell}} = \sum_{j \geq 0} H_{\ell, j}(\underline{Y}_j)X^j.
  \end{displaymath}
  The polynomial $H_{\ell, j}(\underline{Y}_j)$ is homogeneous of degree $j$ if $Y_q$ is of degree $q$ and $H_{\ell, 0} = 1$. Let $(\lambda, f(t))$ with $f(t) = t + \alpha_1t^2 + \cdots + \alpha_{k-1}t^k$ be an element in $\Diff_k = \bG_m\ltimes \mc{U}_{k}$. The action of $[x, (\lambda, f(t))]$ on $\varkappa^{-1}(\pi^\star)$ sends $(\alpha, a_k, \ldots, a_1)$ on the $k+1$-tuple $(\alpha^*, a_k^*, \ldots, a_1^*)$ defined by
\begin{equation}
  \label{eq:descriptionaction}
  \alpha^* = \lambda^{-1}\alpha, \quad a_\ell^* = \sum_{r+s = k-\ell} a_{r}\alpha^{s}H_{k-r,s}(\underline{\alpha}_s).  
\end{equation}
Notice that we always have $a_k^* = a_k$, this implies $\Diff_k$ acts indeed on the fiber of $\varkappa$ over $\pi^{\star}$. By construction, away from $\alpha = 0$ this action is the action of $\Diff_k$ on the set of polar parts. It is therefore transitive and has $\bs{\mu}_k$ stabilizers at each point with $\alpha \neq 0$. Now relations \ref{eq:descriptionaction} show clearly that this action of $\Diff_k$ is trivial on elements having $\alpha= 0$.  
\end{proof}
\begin{proposition}
  \label{prop:isoPC}
  In case either $\bs{p} = 0$ or $\bs{p} > k$ there is an isomorphism from the fiber of $\varkappa$ over $(\pi^\star: \mc{X}^\star \rightarrow S, D)$ on $\mc{C}_{k}$ that is compatible with the transition functions on the target and domain. 
\end{proposition}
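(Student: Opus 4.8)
The plan is to recognise the fiber of $\varkappa$ over $(\pi^\star : \mc{X}^\star \rightarrow S, D)$, equipped with the $\Diff_k$-action of the preceding lemma, as exactly the toroidal embedding $C = C_{\sigma_-}$ of section \ref{subsec:Psivartheta}, and then to read off the compatibility with transition functions from the same uniqueness statement \cite[4 (iv)]{KKMS} that was used to glue $\mc{C}_k$. Both sides are $\bs{\mu}_k$-quotients of affine $\Diff_k$-equivariant embeddings of $\Diff_k$, so the whole content is to produce one $\Diff_k$-equivariant isomorphism between these embeddings; the rest is formal.

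First I would fix the local coordinate $t$ and use the lemma to analyse the open locus $\alpha \neq 0$ of the fiber. There the $\Diff_k$-action is, by construction, the action on genuine order-$k$ polar parts, hence transitive with isotropy $\bs{\mu}_k$; so this locus is the homogeneous space $\Diff_k/\bs{\mu}_k$ and the fiber atlas is a $\Diff_k$-equivariant affine partial compactification of it, degenerating precisely along the zero section $\alpha = 0$ on which $\Diff_k$ acts trivially. The relations \ref{eq:descriptionaction} show that the compactifying coordinate $\alpha$ carries torus weight $\chi^{-1}$, i.e. the direction sits in $\sigma_-$ and not in $\sigma_+$ — this is the same sign check that earlier discarded $\mc{E} = E_{\sigma_+}$. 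This is exactly the cone datum of \cite[4]{KKMS} singling out $C = C_{\sigma_-}$, so by the uniqueness of the bi-equivariant embedding attached to a cone there is a $\Diff_k$-equivariant isomorphism $\Phi_t$ from $C$ onto the fiber atlas extending the orbit identification $\bs{\mu}_k\backslash\Diff_k \simeq \Diff_k/\bs{\mu}_k$ (the matching of stabilizers forcing the interchange of left and right cosets is routine bookkeeping). Since $\bs{\mu}_k \subset \bG_m \subset \Diff_k$, the map $\Phi_t$ is automatically $\bs{\mu}_k$-equivariant and descends to an isomorphism between the local chart $[C/\bs{\mu}_k]$ of $\mc{C}_k$ and the fiber $\varkappa^{-1}(\pi^\star)$.

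It then remains to match transition functions. Changing the coordinate to $\vartheta t$ acts on the fiber atlas through the polar-part change of variables $\sum_\ell a_\ell\, t^{-\ell} = \sum_\ell a_\ell (\vartheta^{-1}\vartheta t)^{-\ell}$, which is precisely the right translation $R_{\vartheta^{-1}}$, while the two presentations $C_t$ and $C_{\vartheta t}$ are glued on the toroidal side by $\Psi_\vartheta$. Under $\Phi$ the resulting change-of-chart rule on the fiber is therefore $\Psi_\vartheta R_{\vartheta^{-1}} = G_\vartheta$, which is exactly the transition function defining $\mc{C}_k$; hence the local isomorphisms $\Phi_t$ glue into the asserted isomorphism compatible with the transition functions on domain and target.

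The main obstacle, and the only place where the hypothesis $\bs{p} = 0$ or $\bs{p} > k$ is used, is the existence of $\Phi_t$: recovering the $\Diff_k$-coordinates $u_1, \ldots, u_{k-1}$ of $\mc{U}_k$ from the polar coefficients $a_1, \ldots, a_{k-1}$ amounts to normalising the leading coefficient by extracting a $k$-th root of the polar part and then inverting the universal expressions $H_{\ell, j}$, a computation whose denominators involve the integers $2, \ldots, k$. These are invertible exactly when $\bs{p}$ is $0$ or exceeds $k$, which is also what makes the orbit map $\Diff_k \to \Diff_k/\bs{\mu}_k$ separable and the fiber atlas reduced, so that the KKMS cone datum genuinely reconstructs the embedding. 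When $\bs{p} \leq k$ these denominators vanish and the two constructions need no longer coincide, consistently with $\fP_{g, \vk}$ being merely tame in that range.
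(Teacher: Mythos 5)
Your overall strategy --- producing the isomorphism abstractly from the KKMS classification of bi-equivariant affine embeddings rather than writing it down --- is genuinely different from the paper's, which constructs $\Sigma_t : [C/\bs{\mu}_k] \rightarrow [\bA^{k}_{\bs{A}}/\bs{\mu}_k]$ explicitly by sending $[x,(\lambda, f(t))]$ to $\alpha = \lambda^{-1}x$, $a_\ell = H_{k,k-\ell}(\underline{\alpha}_{k-\ell})$ and then inverting the resulting triangular system. You do correctly locate the two essential points: the weight of the compactifying coordinate forcing $\sigma_-$ rather than $\sigma_+$, and the invertibility of the leading coefficient $-\ell$ of $H_{\ell,j}$ as the place where the hypothesis $\bs{p}=0$ or $\bs{p}>k$ enters. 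Your treatment of the transition functions also agrees with the paper's.

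However, the central step has a gap. The uniqueness statement \cite[4 (iv)]{KKMS} concerns \emph{bi-equivariant} affine embeddings of the group $\Diff_k$ itself, i.e. normal affine schemes containing $\Diff_k$ as a dense \emph{free} orbit to which both left and right translations extend. You never equip the fiber of $\varkappa$ with a right $\Diff_k$-action (the preceding lemma only provides the left one), and, more seriously, you identify the dense open locus $\alpha \neq 0$ of the fiber atlas with the homogeneous space $\Diff_k/\bs{\mu}_k$: an isomorphism from $C$ onto the atlas cannot ``extend the orbit identification $\bs{\mu}_k\backslash\Diff_k \simeq \Diff_k/\bs{\mu}_k$'', since $C$ contains $\Diff_k$ itself as its dense orbit and a degree-$k$ covering does not extend to an isomorphism; this mismatch is not routine bookkeeping. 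What is actually true, and what the paper exploits, is that the orbit of the base point $(1,0,\ldots,0)$ \emph{in the atlas} is a free $\Diff_k$-orbit, so $\Diff_k$ itself (not its $\bs{\mu}_k$-quotient) sits densely there; but proving that this orbit map is an open immersion is precisely the statement that the system $\alpha = \lambda^{-1}$, $a_\ell = H_{k,k-\ell}(\underline{\alpha}_{k-\ell})$ is invertible --- i.e. the explicit construction. So either the appeal to uniqueness is not applicable as you have set it up, or, once the missing free-orbit and right-action verifications are supplied, it does no work that the explicit map has not already done. Finally, your closing remarks attributing the characteristic hypothesis to separability of $\Diff_k \rightarrow \Diff_k/\bs{\mu}_k$ and to reducedness of the fiber atlas are off target: the atlas is $\bA^k_{\bs{A}}$, which is reduced in every characteristic.
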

\begin{proof} 
  The fiber $\mc{C}_{\vk}$ over $\pi^\star$ is the stacky quotient $[C/\bs{\mu}_k]$ where the scheme $C$ is obtained out of $W = \bA_{\sigma_-}\times \left(\bG_m\ltimes \mc{U}_k\right)$ by identifying $[\lambda^{-1}x, g]$ and $[x, \theta_t(\lambda)g(t)]$. By identifying $\Diff_k$ with the orbit of $(1, 0, \ldots, 0) \in [\bA^{k}_{\bs{A}}/\bs{\mu}_k]$ one gets a morphism from the open subset of $W$ given by $\Diff_k$ to $[\bA^{k}_{\bs{A}}/\bs{\mu}_k]$. Explicitly $\theta_t(\lambda)f(t) \in \Diff_k = \bG_m\ltimes \mc{U}_k$ with $f(t) = t + \alpha_1t^2 + \cdot + \alpha_{k-1}t^{k}$ is sent on  
\begin{displaymath}
  \alpha = \lambda^{-1}, \quad a_\ell = H_{k, k-\ell}(\underline{\alpha}_{k-\ell}). 
\end{displaymath}
One can naturally extend this morphism to $W$ by sending $[x, (\lambda, f(t))]$ on the element 
\begin{displaymath}
  \alpha = \lambda^{-1}x, \quad a_\ell = H_{k, k-\ell}(\underline{\alpha}_{k-\ell}). 
\end{displaymath}
It is clear $[\beta^{-1} x, (\lambda, f(t))]$ and $[x, (\beta\lambda, f(t))]$ have the same image and we thus get a morphism from $C$ on $[\bA_{\bs{A}}^k/\bs{\mu}_k]$. Now an element $\zeta \in \bs{\mu}_k$ acting on the left on $(x, \theta_t(\lambda)f(t))$ is sent on 
\begin{displaymath}
\alpha = \lambda^{-1}\zeta^{-1}, \quad  a_{\ell} = H_{k, k-\ell}(\zeta^{k-1}\alpha_{k-1}, \zeta^{k-1} \alpha_{k-1}, \ldots \zeta\alpha_1) = \zeta^{k-\ell} H_{k, k-\ell}(\underline{\alpha}_{k-\ell}).
\end{displaymath}
This shows that we get in fact a morphism $\Sigma_t : [C/\bs{\mu}_k] \rightarrow [\bA^{k}_{\bs{A}}/\bs{\mu}_k]$. To see it is an isomorphism notice that $H_{\ell, j}(\underline{Y}_j) = -\ell Y_j + (\star\star)_{\ell, j}$ where $(\star\star)_{\ell, j}$ is a polynomial $Y_r$ for $r< j$. Since either $\bs{p} = 0$ or $\bs{p} > k$ one can recover the coordinates of $C$ from those on the fiber of $\varkappa$. 

We still need to check how $\Sigma_t$ behaves under a coordinate change $t \mapsto \vartheta t$ for $\vartheta \in \Diff_k$. Let $\Sigma_t$ and $\Sigma_{\vartheta t}$ be the isomorphisms to $\fP_{g, \vk}$ built using $t$ and $\vartheta t$. Recall the transition function from $C_t$ on $C_{\vartheta t}$ is $G_{\vartheta} = \Psi_{\vartheta}R_{\vartheta^{-1}}$. On $\Diff_k$ the morphism $\Psi_\vartheta = \id$ and $R_{\vartheta^{-1}}$ sends an element $g(t) \in \Diff_k$ on $g(t)\vartheta^{-1}(t) = \vartheta^{-1}(g(t))$. Away from the zero section the composition  $\Sigma_{\vartheta t}G_{\vartheta}\Sigma_t^{-1}$ corresponds by construction to the transition map from $[\bA^{k}/\bs{\mu}_k]_t$ to $[\bA^{k}/\bs{\mu_k}]_{\vartheta t}$ given by $t^{-k} \mapsto \vartheta^{-1}(\vartheta t)^{-k}$.  
\end{proof}
\begin{corollary}
If $\bs{p} = 0$ or $\bs{p} > k_i$ for every $i$ the stacks $\fP_{g, \vk}$ and $\mc{C}_{\vk}$ are isomorphic over $\fMm_{g, n}$.  
\end{corollary}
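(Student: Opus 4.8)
The plan is to deduce this global statement by gluing the local isomorphism of Proposition \ref{prop:isoPC}, exploiting that both the construction of $\mc{C}_{\vk}$ in section \ref{subsec:Toroidal} and the description of the fibers of $\varkappa$ are of local nature over $\fMm_{g,n}$. First I would reduce to the $1$-marked case. The decomposition \ref{eq:isoprodfibre} presents $\fP_{g,\vk}$ as a fibered product over $\fMm_{g,n}$ of the one-marked stacks $\fP_{g,k_i,n-1}$, and the toroidal construction of $\mc{C}_{\vk}$ proceeds marked point by marked point, so it admits the analogous decomposition. It therefore suffices to produce, compatibly with base change and with transition functions, an isomorphism $\fP_{g,k}\simeq \mc{C}_k$ over $\fMm_{g,1}$ for a single profile $k=k_i$; the hypothesis ``$\bs{p}=0$ or $\bs{p}>k_i$ for every $i$'' is exactly the hypothesis ``$\bs{p}=0$ or $\bs{p}>k$'' for each factor.

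Covering $\fMm_{g,1}$ by an \'etale atlas and localizing, I reduce to $S=\Spec(\bs{A})$ for a local $\bbk$-algebra $\bs{A}$, over which I choose a local coordinate $t$ at the marked section as at the end of section \ref{subsec:polarcurves}. Proposition \ref{prop:localfinal} identifies the restriction of $\fP_{g,k}$ with $[\bA^{k}_{\bs{A}}/\bs{\mu}_k]$, and Proposition \ref{prop:isoPC} then supplies, under the characteristic hypothesis, an isomorphism $\Sigma_t$ from this local fiber of $\varkappa$ onto the local cone $\mc{C}_k$ attached to $t$. Because these identifications are made fiberwise over the base and commute with base change, each $\Sigma_t$ is automatically a morphism over $\fMm_{g,1}$, and its $\bs{\mu}_k$-equivariance is precisely what lets it descend to the quotient stacks.

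The heart of the argument is the gluing, and this is the content of the second half of the proof of Proposition \ref{prop:isoPC}. On an overlap the two choices of coordinate are related by $t'=\vartheta t$ for some $\vartheta\in \Diff_k$, and the two stacks carry their own transition isomorphisms: the map $G_\vartheta=\Psi_\vartheta R_{\vartheta^{-1}}$ on the $\mc{C}_k$ side and the change-of-chart isomorphism on the $\fP_{g,k}$ side induced by $t^{-k}\mapsto \vartheta^{-1}(\vartheta t)^{-k}$. Proposition \ref{prop:isoPC} shows that $\Sigma_{t'}G_\vartheta\Sigma_t^{-1}$ agrees with the latter, so the local isomorphisms intertwine the two systems of transition data. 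The cocycle condition for the collection $(\Sigma_t)$ on triple overlaps is then forced by the cocycle conditions already satisfied by $\fP_{g,k}$ and $\mc{C}_k$ as glued stacks, so the $\Sigma_t$ descend to a global isomorphism over $\fMm_{g,1}$; reassembling the $n$ factors through \ref{eq:isoprodfibre} yields $\fP_{g,\vk}\simeq \mc{C}_{\vk}$ over $\fMm_{g,n}$.

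The one genuine subtlety, and the step I expect to require the most care, is that Proposition \ref{prop:isoPC} matches the transition maps only ``away from the zero section'' $\alpha=0$, whereas gluing the whole cones demands agreement everywhere. Here I would invoke that the locus $\alpha\neq 0$ is dense in the affine fiber and that the ambient cones are separated, so the two transition morphisms, agreeing on a dense open, coincide; the extension across $\alpha=0$ is legitimate precisely because the leading coefficient $a_k$ is forced to be invertible and the polynomials $H_{\ell,j}$ have invertible leading term, which is where the condition $\bs{p}=0$ or $\bs{p}>k$ is used. Everything else is the formal assembly of already-verified local data.
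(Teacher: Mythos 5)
Your proposal is correct and follows essentially the same route as the paper, whose proof of this corollary is just the observation that the local isomorphisms of Proposition \ref{prop:isoPC} are compatible with the transition data and hence glue; your elaboration (reduction to the $1$-marked case via \ref{eq:isoprodfibre}, and the density/separatedness argument to extend the agreement of transition maps across the zero section) fills in exactly the details the paper leaves implicit.
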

\begin{proof}
  This boils down to checking the existence of local isomorphisms that glue together to define an isomorphism globally from $\fP_{g, \vk}$ to $\fMm_{g, n}$ and this is the content of \ref{prop:isoPC}.  
\end{proof}

\section{Cone structure and projectivization}

Proposition \ref{prop:localfinal} implies $\fP_{g, \vk}$ is \emph{locally} a stack-cone in the sense of \cite{Kainc} over $\fMm_{g, n}$. This is trivially the case for the stack of twisted polar parts. The isomorphism from $\fP_{g, \vk}$ to $\mc{C}_{\vk}$ defines local isomorphisms of cones over $\fMm_{g, n}$. We are going to describe the (global) natural $\bA^1$-structure on $\fP_{g, \vk}$ needed to show it is indeed a stack-cone. It means defining a $0$ section $\bs{0} :\fMm_{g, n} \rightarrow \fP_{g, \vk}$, an $\fMm_{g, n}$-morphism $\bs{\gamma}: \bA^1\times \fP_{g, \vk} \rightarrow \fP_{g, \vk}$ and $2$-commutative diagrams corresponding to the usual compatibilities between $\bs{\gamma}$, $\bs{0}$ and multiplication by $1$ in the scheme theoretic context. In our case $2$-commutativity is obvious and will not be treated. 

It is easier to define $\bs{0}$ and $\bs{\gamma}$ while working with the description of objects of $\fP_{g, \vk}$ given in proposition \ref{lem:retraductionproduitfibre} . We shall however always describe the corresponding constructions without reference to the stable model of the underlying bubbly curve.     

The zero section $\bs{0} : \fMm_{g, n} \rightarrow \fP_{g, \vk}$ sends the stable $n$-marked $S$-curve $(\pi^\star:\mc{X}^\star\rightarrow S, \underline{D}^\star)$ on the stable polar $S$-curve given by 
\begin{equation}
  \label{eq:zerosection1}
  \left(0: \sO_S \rightarrow \mc{N}_{\cD^\star_i}, (0, 1)^{\otimes k_i} \in \Hi^0\left(\frac{\Sym^{k_i}(\sE_0 = \sO_{\mc{X}^\star}\oplus \sO_S)}{\sO_{\mc{X}^\star}\Delta^{k_i}}\right) \right)_{i=1}^n.
\end{equation}
\begin{proposition}
  \label{prop:section0B}
  An $S$-object in the image of $\bs{0}$ is a stable polar curve $(\pi : \mc{X}\rightarrow S, \underline{\varrho})$ where 
    $(\pi : \mc{X} \rightarrow S, \underline{D})$ is obtained out of $\pi^\star$ by gluing $n$ marked projective $S$-lines $(\bP^1_{S, i}, 0, \infty)_i$ along $0$ and $D_i^\star$, and the polar part $\varrho_i$ is the one on $\bP^1_{S,i}$ along $\infty$ that is given by $t_{\infty}^{-k_i} = t_0^{k_i}$.
\end{proposition}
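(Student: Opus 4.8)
The statement is an unwinding of the constructions of the previous section specialized to the datum $\alpha=0$, and by the product decomposition \ref{eq:isoprodfibre} it suffices to treat the $1$-marked case. The plan is therefore to fix a stable curve $(\pi^\star:\mc{X}^\star\to S,D^\star)$, work locally as in Lemma \ref{lem:localEalpha} with $S=\Spec(\bs{A})$ and $t$ a local equation of $D^\star$ on $U=\Spec(\bs{B})$, and show that the abstract object \ref{eq:zerosection1} produced by $\bs{0}$ is the concrete polar curve described in the statement. First I would identify $\sE_0$: since $\sE_\alpha$ is by Notation \ref{nota:strctcone} the Yoneda pullback of the canonical extension of diagram \ref{diagmu} along $\alpha$, and the pullback of an extension along the zero map is split, one gets $\sE_0\simeq\sO_{\mc{X}^\star}\oplus\sO_S$ as announced in \ref{eq:zerosection1}; concretely, Lemma \ref{lem:localEalpha} with $\alpha=0$ gives $\Sym_{\bs{B}}\bs{E}_0=\bs{B}[Z_1,Z_2]/(tZ_2)$, with $Z_1=(t,0)=\Delta$ the distinguished section and $Z_2=(0,1)$ a $t$-torsion generator.

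Next I would read off the geometry of $\mc{X}_0=\bP(\sE_0)=\Proj_{\mc{X}^\star}\Sym\sE_0\to\mc{X}^\star$. Over the locus $t\neq 0$ the relation $tZ_2=0$ forces $Z_2=0$, so $\sE_0$ is there the invertible sheaf $\sO\cdot Z_1$ and $\bP(\sE_0)$ maps isomorphically to $\mc{X}^\star$; over $D^\star=\{t=0\}$ the relation becomes trivial and the fibre is the whole $\bP^1$ with homogeneous coordinates $[Z_1:Z_2]$. Thus $\mc{X}_0$ is obtained from $\mc{X}^\star$ by gluing a single projective line along $D^\star$, the node being the point $\{Z_2=0\}=[1:0]$ lying in the closure of the generic section. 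The new marking $D_0$ is cut out by $\tau_0:\sE_0\twoheadrightarrow\sO_S$, $Z_1\mapsto 0$, $Z_2\mapsto 1$, i.e.\ by $\{Z_1=0\}=[0:1]$. Writing $t_0=Z_2/Z_1$ for the coordinate vanishing at the node and $t_\infty=Z_1/Z_2=t_0^{-1}$ for the one vanishing at $D_0$ identifies the bubble with $(\bP^1_S,0,\infty)$, its $0$ glued to $D^\star$ and its $\infty$ equal to $D_0$, exactly as claimed.

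Finally I would match the polar part and check stability. Under the identification $\Sym^{k}\sE_0=\rho_*\sO_{\mc{X}_0}(kD_0)$ of the proof of Proposition \ref{lem:retraductionproduitfibre}, the class $(0,1)^{\otimes k}=Z_2^{k}$ of \ref{eq:zerosection1} becomes, after dividing by the canonical section $\Delta^{k}=Z_1^{k}$ of divisor $kD_0$, the rational function $(Z_2/Z_1)^{k}=t_0^{k}=t_\infty^{-k}$ on the bubble; modulo $\sO_{\mc{X}^\star}\Delta^{k}$ this is precisely the polar part $\varrho=t_\infty^{-k}=t_0^{k}$ of order $k$ along $\infty$. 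Each glued component is a rational curve carrying one node and one marked point endowed with an order-$k$ polar part, hence has finite automorphism group by the $(1)\Leftrightarrow(4)$ computation of Lemma \ref{lem:stability}; so the resulting object is a stable polar curve, and reassembling the $n$ factors through \ref{eq:isoprodfibre} yields the statement, described intrinsically as in the proposition.

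The genuinely delicate point is the bookkeeping of coordinate conventions: one must track which of $Z_1,Z_2$ produces $0$ and which produces $\infty$, and confirm that it is the torsion generator $Z_2$ of the \emph{non-locally-free} sheaf $\sE_0$ that both creates the bubble and gives the pole $t_\infty^{-k}$ rather than a zero. Everything else is a formal verification that the local identifications glue, which follows from the base-change compatibilities already established for $\rho_*\mc{N}_{D}$, $\sE_\alpha$ and the quotient $\Sym^{k}\sE_\alpha/\sO\Delta^{k}$.
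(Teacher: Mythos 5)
Your proof is correct and follows essentially the same route as the paper: both identify $\sE_0$ as the split extension with local presentation $\Sym\bs{E}_0=\bs{B}[Z_1,Z_2]/(tZ_2)$, read off from the $\Proj$ that the curve is $\mc{X}^\star$ with a $\bP^1$-bubble attached over each $D_i^\star$, and match $(0,1)^{\otimes k_i}=Z_{2,i}^{k_i}$ with the polar part $t_\infty^{-k_i}$; the only differences are cosmetic (you reduce to $n=1$ via \ref{eq:isoprodfibre} where the paper treats all markings at once, and you add an explicit stability check). Your bookkeeping of the node at $[1:0]$ versus the marking at $\{Z_1=0\}=[0:1]$ is correct and in fact stated more carefully than in the paper's own wording.
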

\begin{proof}
The bubbly curve underlying \ref{eq:zerosection1} is the curve $\mc{X} = \Proj_{\mc{X}^\star}(\Sym\sO_{\mc{X}^\star}\oplus \sO_S^{\oplus n})$ whose marking $D_i$ is given by $(\sO_{\mc{X}^\star}\oplus \sO_S^{\oplus n})_{\mid D_i^\star}\twoheadrightarrow \sO_S$. The $\sO_{\mc{X}^\star}$-module structure on $\sO_{\mc{X}^\star}\oplus \sO_S^{\oplus n}$ is the one obtained out of $\sO_{\mc{X}^\star}^{\oplus n+1}$ by moding out the $i+1$ factor by $\sO_{\mc{X}^\star}(-D_i^\star)$. The $i$-th marking is just given by projection on the $i+1$ factor. Consider the $(n+1)$-tuples $Z_1 = (1, 0, \ldots, 0)$ and $Z_{2, i}= (0,\ldots, 0, 1,0, \ldots, 0)$ having $1$ in position $i+1$. We can write 
\begin{equation}
  \label{eq:presentationA}
  \ms{A} = \Sym(\sO_{\mc{X}^\star}\oplus \sO_S^{\oplus n}) = \frac{\sO_{\mc{X}^\star}[Z_1, Z_{2,i} \mid i \in \{1, \ldots, n\}]}{(\sO_{\mc{X}^\star}(-D_i^\star)Z_{2, i}\mid i \in \{1,\ldots, n\})}.
\end{equation}
Away from $\cup_i D_i^\star$ the algebra $\ms{A}$ is isomorphic to $\Sym\sO_{\mc{X}^\star}$ and $\mc{X}$ is isomorphic to $\mc{X}^\star$ on this open subset. Now the restriction of $\ms{A}$ to $D_i^\star$ is the algebra $\sO_S[Z_1,Z_{2, i}]$ and thus $\mc{X}_{\mid D_i^\star} \simeq \Proj_{\sO_S}\sO_S[Z_1,Z_{2, i}] = \bP^1_{S, i}$. The Cartier divisor $D_i^\star$ is given by $Z_1=0$ and that is the zero section of $\bP^1_{S, i}$. This shows $\mc{X}$ is obtained by the announced gluing. Now
\begin{displaymath}
  \frac{\Sym^{k_i}(\sO_{\mc{X}^\star}\oplus \sO_S)}{\sO_{\mc{X}^\star}\Delta^{k_i}} = \frac{\sO_{\mc{X}^\star}[Z_1, Z_{2,i}]_{k_i}}{ \sO_{\mc{X}^\star}Z_1^{k_i} + \sO_{\mc{X}^\star}[Z_1, Z_{2,i}]_{k_i-1}\sO_{\mc{X}^\star}(-D_i^\star)Z_{2, i}}. 
\end{displaymath}
and the polar part $(0,1)^{\otimes k_i} = Z_{2,i}^{k_i}$ corresponds to the local generator $t_\infty^{k_i}$ of $\mc{N}_{D_i^\star}^{\otimes k_i}$.
\end{proof}
Here we define $\bs{\gamma}$. Let $\alpha_i : \ms{R}_i \rightarrow \mc{N}_{D_i^\star}$ be morphisms of $\sO_S$-modules. An element $\lambda \in \sO_S$ defines a morphism   
\begin{equation}
  \label{eq:sflambda}
\begin{tikzpicture}[baseline=(current  bounding  box.center),>=latex]
      \matrix (m) 
      [matrix of math nodes, row sep=4pt, column sep=1em, text
      height=1ex, text depth=0.25ex]  
      {  \lambda[l]: & \sE_{\alpha_i} &
        \phantom{\cdot} & \phantom{\cdot} &
        \sE_{\lambda\alpha_i}
        \\   
        & x\times_{\alpha_i} y  &\phantom{\cdot} & \phantom{\cdot} &
        \lambda x\times_{\lambda\alpha_i} y. \\};  
      \path[->,font=\scriptsize]  
      (m-1-3.west) edge  (m-1-4.east);
      \path[|->, font=\scriptsize]
      (m-2-3.west) edge (m-2-4.east);
    \end{tikzpicture}
\end{equation}
Notice that $\lambda[l]$ is not the morphism induced by functoriality of pull-back of Yoneda extensions. The latter goes in the opposite way and sends $x\times_{\lambda\alpha_i}y$ on $x\times_{\alpha_i}\lambda_iy$. Define $\bs{\gamma}$ as the morphism sending a couple $(\lambda, (\alpha_i:\ms{R}_i\rightarrow \mc{N}_{D_i^\star}, \varrho_{k_i})_i)$ over $(\pi^\star:\mc{X}^\star \rightarrow S, \underline{D}^\star)$ on
\begin{equation}
  \label{eq:A1structure}
\left(\lambda\alpha_i, {\sf m}_{\lambda}(\varrho_{k_i}) \in \Hi^0\left(\frac{\Sym^{k_i}(\sE_{\lambda\alpha_i})}{\sO_{\mc{X}^\star}\Delta^{k_i}}\right)\right)_{i=1}^n,   
\end{equation}
where ${\sf m}_{\lambda}$ stands for the morphism induced by $\lambda[l]$ on the $k_i$ tensor power. It is clear $\bs{\gamma}$ acts along the fibers of $\varkappa$, it is thus an $\fMm_{g,n}$-morphism.
\begin{proposition}
  In the local picture of proposition \ref{prop:localfinal} the $\bA^1$-structure $\bs{\gamma}$ induces a $\bG_m$-action of weights $(1,1, 2, \ldots, k_i-1)$ on the fiber $[\bA_{\bs{A}}^{k_i}/\bs{\mu}_{k_i}]$. 
\end{proposition}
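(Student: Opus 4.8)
The plan is to transport the $\bA^1$-structure $\bs{\gamma}$ through the chain of identifications in the proof of Proposition \ref{prop:localfinal}, recording its effect on the explicit coordinates $(\alpha, a_k, \ldots, a_1)$ at each step. The organizing observation is that $\bs{\gamma}$ acts \emph{diagonally} on the ambient $\bA_{\bs{A}}^{k+1}$, just as the torus defining the quotient $\bP_{\bs{A}}(-1, k, k-1, \ldots, 1) = [\bA_{\bs{A}}^{k+1}/\bG_m]$ does; two diagonal one-parameter actions automatically commute, so it will be enough to read off the $\bs{\gamma}$-weight of each coordinate and then see what survives the reduction to the slice $\{a_k = 1\}$ that produces $[\bA_{\bs{A}}^k/\bs{\mu}_k]$.

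The first step is to compute $\lambda[l]$ on the local generators of $\sE_\alpha$ from Lemma \ref{lem:localEalpha}, namely $Z_1 = (t, 0)$ and $Z_2 = (\alpha, 1)$ subject to $\alpha Z_1 = t Z_2$. By the definition \ref{eq:sflambda} of $\lambda[l]$, which rescales the $\sO_{\mc{X}^\star}(D^\star)$-component by $\lambda$ and leaves the $\ms{R}$-component fixed, one finds $\lambda[l](Z_1) = \lambda Z_1'$ and $\lambda[l](Z_2) = Z_2'$, where $Z_1', Z_2'$ are the corresponding generators of $\sE_{\lambda\alpha}$; this is consistent with the transformed relation $(\lambda\alpha) Z_1' = t Z_2'$. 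I emphasize this is where the direction chosen for $\lambda[l]$ matters: the functorial Yoneda map flagged after \ref{eq:sflambda} would instead give $Z_1$ weight $0$ and $Z_2$ weight $1$, reversing everything below.

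The second step translates this into the action on polar parts. Writing a section as $\varrho_{k} = \sum_{j=1}^k a_j Z_1^{k-j} Z_2^j$ in the basis of $\Sym^k(\sE_\alpha)/\sO\Delta^k$ furnished by \ref{eq:isolocalEv}, the map ${\sf m}_\lambda$ induced on the $k$-th symmetric power sends $Z_1^{k-j} Z_2^j$ to $\lambda^{k-j}(Z_1')^{k-j}(Z_2')^j$, so that the coefficient $a_j$ is scaled by $\lambda^{k-j}$. Combining with $\alpha \mapsto \lambda\alpha$ coming directly from \ref{eq:A1structure}, the weights of $\bs{\gamma}$ on $(\alpha, a_k, a_{k-1}, \ldots, a_1)$ are $(1, 0, 1, 2, \ldots, k-1)$. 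In particular $a_k$ has $\bs{\gamma}$-weight $0$, which is exactly the fact needed to descend: $\bs{\gamma}$ preserves the slice $\{a_k = 1\}$ used to pass from $[\bA_{\bs{A}}^{k+1}/\bG_m]$ to $[\bA_{\bs{A}}^k/\bs{\mu}_k]$, and since it commutes with the quotient torus it induces a genuine $\bG_m$-action on the fiber $[\bA_{\bs{A}}^k/\bs{\mu}_k]$. On the slice coordinates $(\alpha, a_{k-1}, \ldots, a_1)$ the remaining weights are precisely $(1, 1, 2, \ldots, k-1)$, and the final renormalization in Proposition \ref{prop:localfinal} that replaces the $\bs{\mu}_k$-weights $(-1, k-1, \ldots, 1)$ by $(1,1,2,\ldots,k-1)$ only inverts the $\bs{\mu}_k$-action, leaving the coordinates — hence these $\bs{\gamma}$-weights — unchanged.

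The main obstacle is thus entirely contained in the first two steps: pinning down that $\lambda[l]$ assigns weight $1$ to $Z_1$ and weight $0$ to $Z_2$, and propagating this correctly through the symmetric power so that $a_j$ acquires weight $k-j$. Once the invariance of $a_k$ is established everything else is bookkeeping, since the commutation of the two diagonal actions makes the descent to the quotient stack and the compatibility with the $\bs{\mu}_k$-renormalization automatic.
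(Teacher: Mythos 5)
Your proof is correct and follows essentially the same route as the paper: both come down to the computation that ${\sf m}_\lambda$ sends $Z_1\mapsto\lambda Z_1'$ and $Z_2\mapsto Z_2'$, so the coefficient of $Z_1^{k-j}Z_2^{j}$ scales by $\lambda^{k-j}$, giving weight $0$ on $a_k$ and weights $(1,1,2,\ldots,k-1)$ on $(\alpha,a_{k-1},\ldots,a_1)$. The paper carries out this expansion on decomposable tensors $\bigotimes(x\times_\alpha y)$ rather than directly on the generators, and leaves the descent through the slice $\{a_k=1\}$ implicit, but the content is the same.
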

\begin{proof}
Recall locally the fiber of $\varkappa$ over $(\pi^\star : \mc{X}^\star \rightarrow S, \underline{D})$ is the product of the quotient stack $[\bA_{\bs{A}}^{k_i}/\bs{\mu}_{k_i}]$ where $\bs{\mu}_{k_i}$ acts with weights $(1,1,2,\ldots, k_i-1)$. Keep the notation of proposition \ref{prop:localfinal} and drop the $i$ indices. Let 
\begin{displaymath}
\left(\alpha: \mc{N}_{D_\alpha} \rightarrow \bs{A}, \varrho \in \frac{\Sym^k(\bs{E}_\alpha)}{\sO Z_1^{k}}\right)
\end{displaymath}
be an $\bs{A}$-object in $\fP_{g, \vk}$ over $\pi^\star$. There is $x^{j, lj} \in \sO_{\mc{X}^\star}(D^\star)$ and $y^{j, l_j} \in \ms{R}$ for $j \in \mr{J}$ and $l_j \in \{1, \ldots, k\}$ such that 
\begin{equation}
  \label{eq:calcul1}
\varrho = \sum_{j\in \mr{J}} \bigotimes_{l_j=1}^k (x^{j, l_j} \times_{\alpha}
  y^{j, l_j}). 
\end{equation} 
Let $t$ be an equation of $D^\star$. Writing $x^{j, l_j} = \alpha y^{j, l_j} + tz^{j, l_j}$ and using the local presentation of $\bs{E}_\alpha$ lemma \ref{lem:localEalpha} one gets
\begin{equation}
 \label{eq:calcul5}
\varrho  = \sum_{\ell =0}^{k-1} \sum_{j \in \mr{J}}\big(\sum_{\substack{\mr{I} \subset \{1, \ldots, k\}\\ |\mr{I}|=
    \ell}} \prod_{\ell_j \in \mr{I}} z^{j, \ell_j} \prod_{\bar{\ell}_j \in \complement\mr{I}} y^{j, \bar{\ell}_j} \big) \,\, Z_1^\ell Z_2^{k-\ell} = \sum_{\ell=0}^{k-1} a_{k-\ell} Z_1^\ell Z_2^{k-\ell}.
\end{equation}
By definition ${\sf m}_{\lambda}(\varrho) = \sum_{j\in \mr{J}} \bigotimes_{l_j=1}^k (\lambda z^{j, l_j} \times_{\lambda\alpha}  y^{j, l_j})$. In terms of $Z_1$ and $Z_2$ one gets
\begin{equation}
  \label{eq:actionlambdaloc}
  {\sf m}_\lambda(\varrho)  = \sum_{\ell =0}^{k-1} \sum_{j \in \mr{J}}\big(\sum_{\substack{\mr{I} \subset \{1, \ldots, k\}\\ |\mr{I}|= \ell}} \prod_{\ell_j \in \mr{I}} \lambda z^{j, \ell_j} \prod_{\bar{\ell}_j\in \complement\mr{I}} y^{j, \bar{\ell}_j} \big)Z_1^\ell(\lambda Z_2)^{k-\ell} 
 = \sum_{\ell=0}^{k-1} \lambda^{\ell}a_{k-\ell}
Z_1^\ell(\lambda Z_2)^{k-\ell}.
\end{equation}
$\lambda$ acts trivially on $a_k$ and with the expected weights on the coordinates of $[\bA_{\bs{A}}^k/\bs{\mu}_k]$.
\end{proof}
\begin{corollary}
\label{cor:proprtePPgk}
The quotient stack $\bP\fP_{g, \vk} = [\fP_{g, \vk} \setminus \{\bs{0}\}/\bG_m]$ is proper.
\end{corollary}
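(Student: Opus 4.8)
The plan is to factor through $\fMm_{g, n}$ and to recognise $\bP\fP_{g, \vk}$ as, locally over the base, a weighted projective stack. Because $\bs{\gamma}$ acts along the fibres of $\varkappa$, the map $\varkappa$ is invariant under the $\bG_m$ defining the projectivization and descends to a morphism $\bar{\varkappa} : \bP\fP_{g, \vk} \rightarrow \fMm_{g, n}$. Since $\fMm_{g, n}$ is proper over $\bbk$, it suffices to prove that $\bar{\varkappa}$ is proper. This morphism is visibly of finite type, and properness (separatedness together with universal closedness) may be tested after a smooth surjective base change; I would thus take a scheme atlas $U \rightarrow \fMm_{g, n}$ and localise on $U$ until every divisor $D_i^\star$ carries a local equation, so that the affine local base $S = \Spec(\bs{A})$ falls under proposition \ref{prop:localfinal}.

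Over such an $S$, proposition \ref{prop:localfinal} identifies the fibre of $\varkappa$ with $\prod_{i=1}^n [\bA_{\bs{A}}^{k_i}/\bs{\mu}_{k_i}]$, while the proposition preceding this corollary shows that the single $\bG_m$ underlying $\bs{\gamma}$ acts diagonally, with the strictly positive weights $(1, 1, 2, \ldots, k_i-1)$ on the $i$-th factor. The first thing to pin down is the location of $\bs{0}$: unwinding proposition \ref{prop:section0B} in these coordinates, the zero section corresponds to $a_{k_i} = 1$ and $\alpha_i = a_{k_i-1} = \cdots = a_1 = 0$, that is, to the origin of $\prod_i \bA_{\bs{A}}^{k_i}$. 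Consequently, locally over $S$,
\[
\bP\fP_{g, \vk} \simeq \Big[\big((\textstyle\prod_{i=1}^n \bA_{\bs{A}}^{k_i})\setminus\{0\}\big)\big/\big(\bG_m \times \textstyle\prod_{i=1}^n \bs{\mu}_{k_i}\big)\Big],
\]
the $\bG_m$ acting with the combined list of positive weights.

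Since all these weights are strictly positive, the only $\bG_m$-fixed point of $\prod_i \bA_{\bs{A}}^{k_i}$ is the origin, every orbit in the complement is closed, and the $\bG_m$-quotient of $(\prod_i \bA_{\bs{A}}^{k_i})\setminus\{0\}$ is the relative weighted projective stack attached to these weights. This is the stacky $\Proj$ of a finitely generated $\bs{A}$-algebra concentrated in positive degrees, hence proper over $S$; passing to the further quotient by the finite group $\prod_i \bs{\mu}_{k_i}$, which commutes with $\bG_m$, preserves properness. Thus $\bar{\varkappa}$ is proper over each chart, and by fppf descent of properness $\bar{\varkappa}$ itself is proper. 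Composing with $\fMm_{g, n} \rightarrow \Spec(\bbk)$ then yields the corollary.

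The step I expect to require the most care is the local identification of the projectivization: one must check that removing the global vertex $\bs{0}$ deletes exactly the origin of the total product cone $\prod_i \bA_{\bs{A}}^{k_i}$, rather than the larger locus where some individual factor degenerates, and that it is the positivity of every weight $(1, 1, 2, \ldots, k_i-1)$ — computed in the proposition preceding this corollary — that upgrades the $\bG_m$-quotient from merely separated to proper. Granting that positivity, properness of weighted projective stacks is standard and the descent argument is routine.
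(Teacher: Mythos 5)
Your argument is correct and follows essentially the same route as the paper: identify $\bP\fP_{g,\vk}$ locally over $\fMm_{g,n}$ with a weighted projective stack (modulo the finite groups $\bs{\mu}_{k_i}$) using proposition \ref{prop:localfinal} and the positivity of the weights $(1,1,2,\ldots,k_i-1)$, conclude that $\bP\fP_{g,\vk}\rightarrow\fMm_{g,n}$ is proper, and compose with the properness of $\fMm_{g,n}$. You merely spell out the details (location of the zero section, descent of properness) that the paper's two-line proof leaves implicit.
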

\begin{proof}
Locally $\fP_{g, \vk}\rightarrow \fMm_{g, n}$ is a finite product of weighted projective stacks and $\fP_{g, \vk} \rightarrow \fMm_{g, n}$ is thus proper. Since $\fMm_{g, n}$ is proper (\cite{Knudsen2}) this is the case as well for $\bP\fP_{g, \vk}$. 
\end{proof}
It is instructive to see that $\bs{\gamma}$ acts on the set of polar parts over the underlying bubbly curve. An element $\lambda \in \sO_S^*$ defines an isomorphism $\lambda[r] : \sE_{\lambda\alpha_i} \rightarrow \sE_{\alpha_i}$ by functoriality of the pull-back of Yoneda extensions. It sends $x\times_{\lambda\alpha_i} y$ on $x \times_{\alpha_i} \lambda y$. We get this way an isomorphism 
\begin{displaymath}
\left(\lambda\alpha_i, {\sf m}_\lambda(\varrho_i) \in \Hi^0\left(\frac{\Sym^{k_i}\sE_{\lambda\alpha_i}}{\sO\Delta^{k_i}}\right)\right)_{i}\simeq \left(\alpha_i, \hat{\lambda}{\sf m}_\lambda(\varrho_i) \in \Hi^0\left(\frac{\Sym^{k_i}\sE_{\lambda\alpha_i}}{\sO\Delta^{k_i}}\right)\right)_{i}
\end{displaymath}
 where $\hat{\lambda}$ is the isomorphism induced by $\lambda[r]$ between the symmetric algebras $\Sym\sE_{\lambda\alpha_i}$ and $\Sym\sE_{\alpha_i}$. Now $\hat{\lambda}{\sf m}_{\lambda}$ is the map $x\times_{\alpha_i} y \mapsto \lambda (x\times_{\alpha_i} y)$. This means we have an isomorphism  
\begin{displaymath}
\left(\lambda\alpha_i, {\sf m}_\lambda(\varrho_i) \in \Hi^0\left(\frac{\Sym^{k_i}\sE_{\lambda\alpha_i}}{\sO\Delta^{k_i}}\right)\right)_{i} \simeq \left(\alpha_i,
  \lambda^{k_i}\varrho_{k_i} \in \Hi^0\left(\frac{\Sym^{k_i}\sE_{\alpha_i}}{\sO\Delta^{k_i}}\right)\right)_{i}.
\end{displaymath} 
This can be understood by saying that the $\bG_m$-action on $\fP_{g, \vk} \rightarrow \fMm_{g, n}$ sends a stable polar curve $(\pi : \mc{X} \rightarrow S, \varrho_i)$ on $(\pi:\mc{X} \rightarrow S, \lambda^{k_i}\varrho_i)$. The difference between expression \ref{eq:A1structure} and this one is the fact the former preserves the trivializations of $\sL_i^{-k_i}$ while the latter preserves the underlying bubbly curve. 

Unfortunately, this $\bG_m$-action is not the action on polar parts that comes from the $\sO_{S}$-module structure and that sends $(\alpha_i, \varrho_i)$ on $(\alpha_i, \lambda \varrho_i)$. The good news is that both $\bG_m$-actions are equivalent on the complement of the zero section of $\varkappa : \fP_{g,\vk} \rightarrow \fMm_{g, n}$. Let $\nu[\vk]$ be the morphism sending a polar curve $(\alpha_i, \varrho_i)$ on $(\alpha_i, \smash{\varrho_i^{k_i}})$ where $\smash{\varrho_i^{k_i}}$ is the $k_i$-th tensor power of $\varrho_i$ that lies in 
\begin{equation}
\label{eq:tensorpolarparts}
\left(\frac{\sO_{\mc{X}_{\alpha_i}}(k_iD_i)}{\sO_{\mc{X}_{\alpha_i}}}\right)^{\otimes k_i} = \frac{\sO_{\mc{X}_{\alpha_i}}(k_i^2D_{\alpha_i})}{\sO_{\mc{X}_{\alpha_i}}(k_i(k_i-1)D_{\alpha_i})}. 
\end{equation}
By tensoring the previous sheaf with $\sO_{\mc{X}_{\alpha_i}}(-k_i(k_i-1)D_{\alpha_i})$ one gets back the sheaf of polar parts and can thus locally identify $\smash{\varrho_i^{k_i}}$ with a well defined polar part. It is straightforward to check that the stack of polar curves where one replaces the sheaf of polar parts by \ref{eq:tensorpolarparts} is isomorphic to $\fP_{g, \vk}$ over $\fMm_{g, n}$. We therefore look at $\nu[\vk]$ as a morphism having target $\fP_{g, \vk}$. Let $G_{i, j}(Y_{k_i}, \ldots, Y_1)$ be the polynomial in $Y_\ell$ which is the coefficient of $X^j$ in 
\begin{displaymath}
(Y_{k_i}X^{k_i} + Y_{k_i-1}X^{k_i-1} + \cdots + Y_1X^1)^{k_i}.
\end{displaymath}
The polynomial $G_{i,j}$ is homogeneous of degree $k_i$ in $Y_1, \ldots, Y_{k_i}$ and $G_{i, k_i} = Y_{k_i}^{k_i}$. Locally $\nu[\vk]$ is defined by sending an $\bs{A}$-point $(\alpha_i, a_{k_i}, \ldots, a_1)$ on 
\begin{displaymath}
(\alpha_i, G_{i, k_i}(\underline{a}), G_{i, k_i-1}(\underline{a}), \ldots, G_{i, 1}(\underline{a}))
\end{displaymath}
where $\underline{a}$ is short for $(a_{k_i}, \ldots, a_1)$. It is not hard to see that this map is indeed defined on $\prod_i\bP_{\bs{A}}(-1, k_i, k_i-1, \ldots, 1)$ where $a_{k_i} \neq 0$ and it is clear it is $\bG_m$-equivariant. Restrict $\nu[\vk]$ to the complement of the zero section $\alpha_i= a_{k_i-1} = \cdots = a_1=0$. Now $G_{i, j}(\underline{Y}) = b_{j}$ is a triangular system of equations that has a unique solution if we fix a $k_i$-th root of unity of $b_{k_i}$. Any other solution is obtained by multiplying by a $k_i$-th root of unity. This says $\nu[\vk]$ defines an isomorphism from $\prod_i[\bA_{\bs{A}}^{k_i}/\bs{\mu}_{k_i}]\setminus \{\bs{0}\}$ on itself. This is enough to claim that the projectivizations of $\fP_{g, \vk}$ for both $\bG_m$-actions are isomorphic.    

\section{Residues and the ELSV compactification}

\subsection{Definition of the ELSV cone}

Let $(\pi: \mc{X}\rightarrow S, \underline{\varrho})$ be a stable polar curve in $\fP_{g, \vk}$ and consider the exact sequence 
\begin{equation}
\label{equationabla}
\begin{tikzpicture}[baseline=(current  bounding  box.center),>=latex, descr/.style={fill=white,inner sep=2.5pt}]
    \matrix (m) 
    [matrix of math nodes, row sep=2.5em, column sep=2em, text
    height=1ex, text depth=0.25ex] 
    {   0  & \sO_S & \pi_*\sO_{\mc{X}}(\sum_{i=1}^n k_iD_i) &
      \sP_{\mc{X}, \vk} \\
      & \Ri^1\pi_*\sO_{\mc{X}} & \Ri^1\pi_*\sO_{\mc{X}}(\sum_{i=1}^n k_iD_i) & 0. \\};
    \path[->,font=\scriptsize]  
    (m-1-1) edge (m-1-2)
    (m-1-2) edge (m-1-3)
    (m-1-3) edge (m-1-4)
    (m-2-2) edge (m-2-3)
    (m-2-3) edge (m-2-4)
    (m-1-4) edge[out=-10, in=170] node[descr] {$\nabla_{\mc{X}}(\vk)$} (m-2-2);
  \end{tikzpicture}
\end{equation}
The theory of base change in cohomology for proper flat morphisms says $\sP_{\mc{X}, \vk}$, $\Ri^1\pi_*\sO_{\mc{X}}$ and $\Ri^1\pi_*\sO_{\mc{X}}(\sum_{i=1}^nk_iD_i)$ are locally free sheaves commuting to any base change. As a result $\nabla_{\mc{X}}(\vk)$ commutes to any base change. To each $S$-object $(\pi:\mc{X}\rightarrow S, \underline{\varrho})$ we can thus attach a global section $\nabla_{\mc{X}}(\vk)(\underline{\varrho})$ in $\Ri^1\pi_*\sO_{\mc{X}}$. We get in this way a section $\nabla(\vk)$ of the vector bundle over $\fP_{g, \vk}$ defined by $\varkappa^*\Ri^{1}\pi_*\sO_{\fMm_{g, n}}$. By Grothendieck-Serre duality we have that \begin{displaymath}
\Ri^1\pi_*\sO_{\mc{X}} \simeq (\pi_*\omega_{\mc{X}})^\vee. 
\end{displaymath}
Following \cite{HartRes} and \cite{BrianConrad} this isomorphism commutes to base change. This shows $\nabla(\vk)$ does in fact define a section $\nabla(\vk)^\vee$ of $\varkappa^*\mbb{E}_{g, n}^\vee$ where $\mbb{E}_{g, n}$ is the Hodge bundle on $\fMm_{g, n}$. Recall fibers of $\mbb{E}_{g, n}$ over a marked curve are global sections of the dualizing sheaf, see \cite{ArbaCorGriff} for further details. 
\begin{definition}
  \label{defn:Z}
  The ELSV cone $\fZ_{g, \vk}$ is the zero locus of $\nabla(\vk)^{\vee}$. 
\end{definition}
In the case of a polar curve $(X, \underline{\varrho})$ with smooth $X$ the section $\nabla(\vk)^{\vee}(\underline{\varrho})$ is just the residue linear form 
\begin{displaymath}
  \kappa \longmapsto \sum_{i=1}^n \Res_{p_i}(\kappa \varrho_i)
\end{displaymath}
where $\kappa$ is a regular $1$-form on $X$. The characterization of meromorphic functions by their residues says $(X, \underline{\varrho})$ is in $\fZ_{g, \vk}$ if and only if $\underline{\varrho}$ is the collection of polar parts of a meromorphic function on $X$ at the marked points. This theorem extends to the case of any polar curve by replacing regular $1$-forms with sections of the dualizing sheaf. Since the marked points are in the smooth locus residues at the marked points are the usual ones. See \cite{ELSV} for the above formulation of this classical result. It can be deduced of the long exact sequence \ref{equationabla} and the expression of Serre duality which is explicit in this case (see for instance \cite[10.2]{ArbaCorGriff}). 

The previous analysis holds locally over the base. Let $(\pi: \mc{X} \rightarrow S, \underline{\varrho})$ be a polar curve in $\fZ_{g, \vk}$. Take an affine open cover $\{U_{\ell} \rightarrow S\}_\ell$ of $S$ and let $\mc{X}_\ell$ be the pullback of $\mc{X}$ along $U_\ell \rightarrow S$. Assume $U_\ell = \Spec(\bs{A}_\ell)$ with $\bs{A}_\ell$ a local $\bbk$-algebra. Pulling back the long exact sequence \ref{equationabla} along $U_\ell \rightarrow S$ we get the exact sequence 
\begin{equation}
\label{equationablalocal}
\begin{tikzpicture}[baseline=(current  bounding  box.center),>=latex]
    \matrix (m) 
    [matrix of math nodes, row sep=2.5em, column sep=2em, text height=1ex, text depth=0.25ex] 
    {   0  & \bs{A}_\ell & \Hi^0\big(\sO_{\mc{X}_\ell}(\sum_{i=1}^n
      k_iD_{\ell, i})\big) & \Hi^0(\sP_{\mc{X}_{\ell}, \vk}) &
      \Hi^1(\sO_{\mc{X}_{\ell}}). \\}; 
    \path[->,font=\scriptsize]  
    (m-1-1) edge (m-1-2)
    (m-1-2) edge (m-1-3)
    (m-1-3) edge (m-1-4)
    (m-1-4) edge (m-1-5);
  \end{tikzpicture}  
\end{equation}  
By assumption the image of $\underline{\varrho}_\ell$ by $\nabla(\vk)$ is zero in $\Hi^1(\sO_{\mc{X}_\ell})$. We can therefore lift $\underline{\varrho}_{\ell}$ to a section in $\Hi^0(\sO_{\mc{X}_\ell}(\sum_{i=1}^nk_iD_{\ell, i}))$. This means we have local lifts of $\underline{\varrho}$ to $\pi_*\sO_{\mc{X}}(\sum_{i=1}^nk_iD_i)$. Any lift of $\underline{\varrho}_\ell$ is defined up to the addition of a constant in $\bs{A}_\ell$. A polar curve in $\fZ_{g, \vk}$ is given locally on the base by meromorphic functions (up to the addition of a constant) on the underlying bubbly curve having the expected pole order along the markings. 

\subsection{Relation to stable maps}  
A stable $S$-map of degree $d$ from a curve of genus $g$ on $\bP^1_S$ is a degree $d$ projective $S$-morphism $F : \mc{X} \rightarrow \bP^1_S$ from a prestable $n$-marked $S$-curve of genus $g$ into $\bP^1$ such that automorphism groups of fibers $(\mc{X}_s, F_s)$ are finite. It is equivalent to the fact each smooth rational contracted component of $\mc{X}_s$ has at least $3$ nodal or marked points. Stable maps define a proper Deligne--Mumford stack $\fMm_{g,n}(\bP^1, d)$ (\cite{BehManin}). Automorphisms of stable maps are automorphisms of the marked domain curves commuting to the maps. For further details see  \cite{FultPandh} and \cite{BehManin}. 

The stack of stable maps comes with $n$ evaluation maps $\bs{ev}_1, \ldots, \bs{ev}_n$ : given a stable map $F : \mc{X} \rightarrow \bP^1_S$ from the marked curve $(\pi : \mc{X} \rightarrow S, \underline{D})$ the evaluation map $\bs{ev}_i$ sends $D_i$ on the $S$-point of $\bP^1_S$ given by $\bs{ev}_i\circ \sigma_i$ where $\sigma_i$ is the section defining $D_i$. We write $\fMm_{g,n}(\bP^1, d, \infty)$ for the closed substack of $\fMm_{g,n}(\bP^1, d)$ corresponding to the intersection locus of $\bs{ev}_i^{-1}\infty$.
\begin{definition}
    Denote $\fMm_{g,n}(\bP^1, \vk)$ the substack of $\fMm_{g, n}(\bP^1, d, \infty)$ given for $S \in \Sch$ by $S$-maps $F: \mc{X} \rightarrow \bP_S^1$ satisfying scheme theoretically
  \begin{equation}
    \label{conditionprofile}
    \phi^{-1}\infty = \sum_{i=1}^nk_iD_i.
  \end{equation}
\end{definition}
The stack $\fMm_{g, n}(\bP^1, \vk)$ comes with a natural action of $\Aut(\bP^1, \infty)$. An automorphism $\alpha \in \Aut(\bP^1, \infty)$ sends an $S$-map $F$ on $\alpha \circ F$. We identify $\Aut(\bP^1, \infty)$ with $\bG_m\ltimes \bG_a$ where $\bG_a$ fixes $\infty$ and $\bG_m$ both $0$ and $\infty$.
\begin{lemma}
  $\fMm_{g, n}(\bP^1, \vk)$ is a locally closed (thus algebraic) substack of $\fMm_{g, n}(\bP^1, d, \infty)$.  
\end{lemma}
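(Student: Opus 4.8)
The plan is to cut out condition \ref{conditionprofile} as the intersection of one closed and one open condition on the universal family over $\mc{M} := \fMm_{g, n}(\bP^1, d, \infty)$. Write $\pi : \mc{X} \rightarrow \mc{M}$ for the universal curve, $F : \mc{X} \rightarrow \bP^1$ for the universal stable map and $D_1, \ldots, D_n$ for the universal marked divisors; since $\bs{ev}_i = \infty$ on $\mc{M}$, each $D_i$ is contained set-theoretically in $F^{-1}(\infty)$. I would set $\sL := F^*\sO_{\bP^1}(1)$ and let $\tau := F^* s_\infty$ be the pullback of the tautological section $s_\infty$ of $\sO_{\bP^1}(1)$ cutting out $\infty$ with a simple zero, so that as closed subschemes $F^{-1}(\infty) = V(\tau)$. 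Note $\sum_i k_i D_i$ is an effective Cartier divisor, finite flat of degree $d$ over $\mc{M}$ (the markings lie in the smooth locus), cut out by its canonical section $\sigma$ of $\sO_{\mc{X}}(\sum_i k_i D_i)$.

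First I would isolate the closed condition $\sum_i k_i D_i \subseteq F^{-1}(\infty)$, i.e. that $\tau$ vanishes to order at least $k_i$ along $D_i$. This is the vanishing of the image of $\tau$ under the restriction $\pi_*\sL \rightarrow \pi_*\big(\sL \otimes \sO_{\sum_i k_i D_i}\big)$. Because $\sum_i k_i D_i$ is finite flat over $\mc{M}$, the target is a locally free $\sO_{\mc{M}}$-module whose formation commutes with base change; hence the vanishing of this section cuts out a closed substack $\mc{M}_A \subseteq \mc{M}$ whose formation commutes with base change. Over $\mc{M}_A$ the section $\tau$ factors through the subsheaf $\ms{M} := \sL(-\sum_i k_i D_i) \hookrightarrow \sL$, giving a section $\tau'$ with $\tau = \tau' \otimes \sigma$ under $\sL = \ms{M} \otimes \sO_{\mc{X}}(\sum_i k_i D_i)$.

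The remaining, open, condition is that $\tau'$ be nowhere vanishing. Indeed the ideal of $V(\tau)$ equals $J \cdot \sO_{\mc{X}}(-\sum_i k_i D_i)$, where $J$ is the ideal locally generated by $\tau'$; since $\sO_{\mc{X}}(-\sum_i k_i D_i)$ is invertible, one gets $V(\tau) = \sum_i k_i D_i$ exactly when $J = \sO_{\mc{X}}$, that is, when $\tau'$ generates $\ms{M}$ everywhere. The locus $V(\tau') \subseteq \mc{X}$ is closed, and as $\pi$ is proper its image $\pi(V(\tau'))$ is closed in $\mc{M}_A$; its complement $\mc{U}$ is the open substack over which $\tau'$ is nowhere vanishing on every fibre, and this formation commutes with base change. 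Observe that $\tau'$ vanishes identically along any component contracted by $F$ onto $\infty$, so the open condition automatically rules these out, as it must.

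Consequently $\fMm_{g, n}(\bP^1, \vk)$ is the open substack $\mc{U}$ of the closed substack $\mc{M}_A$ of $\mc{M}$, hence a locally closed substack; being locally closed in an algebraic stack it is algebraic. I expect the main obstacle to be purely the scheme-theoretic bookkeeping rather than any geometric input: checking that condition $A$ is genuinely closed with the correct base-change behaviour, and verifying the factorization $\tau = \tau' \otimes \sigma$ together with the equivalence $V(\tau) = \sum_i k_i D_i \iff \tau'$ nowhere vanishing, so that each piece really represents the corresponding subfunctor.
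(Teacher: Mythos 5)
Your argument is correct, and it is worth recording how it relates to the paper's. The first half --- cutting out the containment $\sum_i k_iD_i \subseteq F^{-1}(\infty)$ as the vanishing of the section $t_\infty$ (your $\tau$) restricted to the finite flat divisor $\sum_i k_iD_i$, hence as a closed condition whose formation commutes with base change --- is exactly the paper's first step. You diverge on the remaining condition. The paper invokes Zariski's main theorem to note that quasi-finiteness (hence finiteness) of $F^{-1}(\infty)\rightarrow S$ is open on the base, and then asserts that, once containment and finiteness hold, the scheme-theoretic equality $F^{-1}(\infty)=\sum_i k_iD_i$ is a further closed condition; the resulting presentation is a closed condition inside an open inside a closed. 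You instead factor $\tau = \tau'\otimes\sigma$ through the invertible subsheaf $\sL(-\sum_i k_iD_i)\hookrightarrow \sL$ and observe that the scheme-theoretic equality holds precisely when the residual section $\tau'$ is nowhere vanishing, an open condition obtained from properness of $\pi$ applied to $V(\tau')$. Your route buys a clean (open inside closed) presentation in one stroke, with the base-change behaviour of each locus transparent, and it sidesteps the paper's rather terse final claim that equality becomes closed once finiteness is granted. What the paper's route makes explicit, and yours encodes only implicitly in the vanishing of $\tau'$ along contracted components, is the geometric meaning of the open condition: no component of a fibre of $\mc{X}$ is contracted by $F$ onto $\infty$. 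Either argument establishes the lemma.
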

\begin{proof}
  Let $F: \mc{X} \rightarrow \bP^1_S$ an $S$-object in $\fMm_{g, n}(\bP^1, d, \infty)$. The condition $\sum k_i D_i \subset \phi^{-1}(\infty)$ is closed on the base. Indeed, $F$ is given by an onto map $(t_0,t_\infty): \sO_\mc{X}^{\oplus 2} \twoheadrightarrow F^*\sO_{\bP_S^1}(1)$ for which the previous condition is expressed by $t_{\infty \mid \sum_i k_iD_i} = 0$. Now following the Zariski main theorem the conditions for $\phi^{-1}(\infty) \rightarrow S$ to be quasi-finite and (thus) finite are open on $S$. We can assume that $\phi^{-1}(\infty) \rightarrow S$ is finite and that $\sum k_i D_i \subset \phi^{-1}(\infty)$. Under these assumptions $\phi^{-1}(\infty) = \sum_i k_iD_i$ is a closed condition on $S$. 
\end{proof}
Let $x_0$ and $x_\infty$ be generators of $\sO_{\bP_S^1}(1)$ at the $0$ and $\infty$ $S$-points. A map $F : \mc{X} \rightarrow \bP^1_S$ in $\fMm_{g, n}(\bP^1, \vk)$ is given by an onto morphism 
\begin{displaymath}
\begin{tikzpicture}[baseline=(current  bounding  box.center),>=latex]
    \matrix (m) 
    [matrix of math nodes, row sep=2.5em, column sep=2em, text height=1ex, text depth=0.25ex] 
    { \sO_{\ms{X}}^{\oplus 2} & F^*\sO_{\bP_S^1}(1)\\};  
    \path[->>,font=\scriptsize]  
    (m-1-1) edge (m-1-2);
  \end{tikzpicture}
\end{displaymath}
defined by $s_0 =F^*x_0$ and $s_\infty = F^*x_\infty$. Away from $F^{-1}\infty = \sum_ik_iD_i$ the section $s_\infty$ is invertible and $s_0s_\infty^{-1}$ defines a global section on $\mc{X} \setminus \cup_iD_i$. Since $\sum_ik_iD_i$ is a relative Cartier divisor $s_0s_\infty^{-1}$ extends to a relative meromorphic section $\zeta_{F}$ on $\mc{X}$. By assumption $\zeta_F \in \sO_{\mc{X}}(\sum_i k_i D_i)$ and its image $\overline{\zeta}_{\bs{F}}$ in $\sP_{\mc{X}, \vk}$ is a well defined polar part of order $\vk$. This procedure defines a forgetful map from $\fMm_{g, n}(\bP^1, \vk)$ on $\fZ_{g, \vk}$. This is the case because $(\mc{X}, \underline{D})$ is bubbly: any unstable component of a fiber has to dominate $\bP^1$ but this is the case if and only if it has a marked point.   
\begin{proposition}
  The map $\fMm_{g, n}(\bP^1, \vk) \rightarrow \fZ_{g,\vk}$ is a $\bG_a$-torsor, thus $[\fMm_{g,n}(\bP^1, \vk)/\bG_a] \simeq \fZ_{g, \vk}$.
\end{proposition}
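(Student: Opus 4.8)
The plan is to read the forgetful map $\fMm_{g,n}(\bP^1, \vk) \to \fZ_{g, \vk}$ as the operation of retaining only the polar part $\overline{\zeta}_F$ of the meromorphic function $\zeta_F$ attached to a stable map, and to identify its fibre over a stable polar curve $(\pi : \mc{X} \to S, \underline{\varrho})$ with the set of global lifts of $\underline{\varrho}$ to an honest meromorphic function; the point is that these lifts form a $\bG_a$-torsor. First I would fix the action. Writing $\Aut(\bP^1, \infty) = \bG_m \ltimes \bG_a$, the factor $\bG_a$ acts on $\bP^1 \setminus \{\infty\}$ by translation of the affine coordinate $u = x_0 x_\infty^{-1}$, so that $c \in \bG_a$ sends $F$ to the translate $c \cdot F$ with $\zeta_{c\cdot F} = \zeta_F + c$. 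Adding a constant does not change the image $\overline{\zeta}_F$ in $\sP_{\mc{X}, \vk}$, so the forgetful map is $\bG_a$-invariant and descends to $[\fMm_{g,n}(\bP^1, \vk)/\bG_a]$.

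The heart of the argument is producing local sections, and this is exactly where Definition \ref{defn:Z} is used. Over $\fZ_{g, \vk}$ the universal polar part satisfies $\nabla(\vk)(\underline{\varrho}) = 0$, so by exactness of \ref{equationabla} the section $\underline{\varrho}$ lies in the image of $\pi_* \sO_{\mc{X}}(\sum_i k_i D_i) \to \sP_{\mc{X}, \vk}$ and therefore admits, locally on the base, a lift $\zeta$. I would then invert the construction of the forgetful map: to such a lift I attach the $S$-morphism $F_\zeta : \mc{X} \to \bP^1_S$ determined by $F_\zeta^* x_0 = \zeta$ and $F_\zeta^* x_\infty = \1_D$, where $\1_D$ is the canonical section of $\sO_{\mc{X}}(\sum_i k_i D_i)$ with divisor $\sum_i k_i D_i$. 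Since $\underline{\varrho}$ is of order exactly $k_i$ along $D_i$, the section $\zeta$ is a local generator there; hence $\zeta$ and $\1_D$ generate $\sO_{\mc{X}}(\sum_i k_i D_i)$ everywhere, $F_\zeta$ is a genuine morphism with $F_\zeta^{-1}(\infty) = \sum_i k_i D_i$ scheme-theoretically, and $\zeta_{F_\zeta} = \zeta$. That $F_\zeta$ is stable follows exactly as noted just before the statement: every bubble of the bubbly curve $\mc{X}$ carries a marked point at which $\zeta$ has a genuine pole, so no bubble is contracted and each dominates $\bP^1$. Thus $F_\zeta$ is a local section of $\fMm_{g,n}(\bP^1, \vk) \to \fZ_{g, \vk}$, which in particular gives surjectivity.

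It remains to verify the torsor identity $\bG_a \times \fMm_{g,n}(\bP^1, \vk) \simeq \fMm_{g,n}(\bP^1, \vk) \times_{\fZ_{g, \vk}} \fMm_{g,n}(\bP^1, \vk)$. Given two stable maps $F, F'$ over $S$ and an isomorphism $\iota$ of their images in $\fZ_{g, \vk}$, the functions $\iota^* \zeta_{F'}$ and $\zeta_F$ share the same polar parts, so their difference is a global section of $\sO_{\mc{X}}$; as $\pi$ is proper with connected (bubbly, hence reduced) fibres one has $\pi_* \sO_{\mc{X}} = \sO_S$ --- this is the kernel appearing on the left of \ref{equationabla} --- so the difference is a unique $c \in \sO_S = \bG_a(S)$. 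The assignments $(F, F', \iota) \mapsto (c, F)$ and $(c, F) \mapsto (F, c\cdot F, \id)$ are mutually inverse equivalences, so the $\bG_a$-action is free and transitive on fibres. Together with the local sections this exhibits $\fMm_{g,n}(\bP^1, \vk) \to \fZ_{g, \vk}$ as an fppf $\bG_a$-torsor, whence $[\fMm_{g,n}(\bP^1, \vk)/\bG_a] \simeq \fZ_{g, \vk}$. The main obstacle is the middle step: one must upgrade the pointwise vanishing of $\nabla(\vk)$ to the existence of lifts \emph{in families}, which relies on the base-change compatibility of \ref{equationabla} recorded after Definition \ref{defn:Z} (so that the torsor has structure group precisely $\sO_S = \pi_* \sO_{\mc{X}}$), and one must make sure the reconstructed $F_\zeta$ is a stable --- not merely prestable --- map.
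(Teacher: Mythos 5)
Your proposal is correct and follows essentially the same route as the paper's proof: lift $\underline{\varrho}$ locally using the vanishing of $\nabla(\vk)$ in the long exact sequence \ref{equationablalocal}, reconstruct $F_\zeta$ from the generators $\{\zeta, \bs{1}\}$ of $\sO_{\mc{X}}(\sum_i k_iD_i)$, note stability because no bubble is contracted, and observe that the set of lifts is a torsor under the constants $\bs{A} = \pi_*\sO_{\mc{X}}$. The only difference is presentational — you spell out the torsor identity $\bG_a \times \fMm_{g,n}(\bP^1,\vk) \simeq \fMm_{g,n}(\bP^1,\vk)\times_{\fZ_{g,\vk}}\fMm_{g,n}(\bP^1,\vk)$ explicitly where the paper leaves it implicit.
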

\begin{proof}
This is a local statement. Assume $S = \Spec(\bs{A})$ for a local $\bbk$-algebra $\bs{A}$ and that $(\pi : \mc{X} \rightarrow S, \underline{\varrho})$ is a polar curve in $\fZ_{g, \vk}$. By assumption $\underline{\varrho}$ has a lift $\zeta \in \Hi^0(\sum_ik_iD_i)$. Looking back at the long exact sequence \ref{equationablalocal} any such lift is of the form $\zeta + a\bs{1}$ for $a \in \bs{A}$ and $\bs{1}$ the canonical section $\sO_{\mc{X}} \rightarrow \sO_{\mc{X}}(\sum_ik_iD_i)$. Given $\zeta$ one defines an $S$-map $F_{\zeta} : \mc{X} \rightarrow \bP_S^1$ by looking at the local generators $\{\zeta, \bs{1}\}$ of $\sO_{\mc{X}}(\sum_ik_iD_i)$. These generate $\sO_{\mc{X}}(\sum_ik_iD_i)$ because locally $\zeta = f_i^{-k_i}$ for an equation $f_i$ of $D_i$. In particular $F_\zeta$ satisfies \ref{conditionprofile}. Away from $\cup_iD_i$ the map $F_{\zeta}$ is defined by $\zeta$ and by $\zeta^{-1}$ at the neighborhood of $\cup_iD_i$. The map $F_\zeta$ is stable because $\zeta$ is never colinear to $\bs{1}$ at the neighborhood of a marked point and thus $F_{\zeta}$ does never contract a bubble. It is clear the action of $a \in \bG_a(\bs{A})$ on $F_{\zeta}$ corresponds to the action of $a$ sending $\zeta$ on $\zeta + a\bs{1}$. To prove the statement we only need to notice $\bs{A}$ acts freely on lifts of $\underline{\varrho}$, but this is always the case for a lift is never colinear to $\bs{1}$ at the marked points. 
\end{proof}

\subsection{The ELSV compactification} 

Let $\fM_{g, n}(\bP^1, \vk)$ be the open substack of $\fMm_{g, n}(\bP^1,\vk)$ corresponding to maps having smooth domains. An $S$-map $F :\mc{X} \rightarrow \bP_S^1$ in $\fM_{g, n}(\bP^1, \vk)$ defines in particular an $S$-object in $\cH_{g, \vk}^\bullet$. Conversely, for an object in $\cH_{g, \vk}^\bullet$ one gets a map in $\fMm_{g, n}(\bP^1, \vk)$ if given an isomorphism from the target on $(\bP^1, \infty)$. This is always possible locally on the base. The map from $\fM_{g, n}(\bP^1, \vk)$ on $\cH_{g, \vk}^\bullet$ built previously is, however, not an isomorphism because objects of $\cH_{g, \vk}^\bullet$ have more automorphisms than those of $\fM_{g, n}(\bP^1,\vk)$. 
\begin{proposition}
  The quotient stack $[\fM_{g, n}(\bP^1, \vk)/\Aut(\bP^1, \infty)]$ is isomorphic to $\cH_{g, \vk}^\bullet$. 
\end{proposition}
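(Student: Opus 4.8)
The plan is to exhibit the isomorphism by first building a comparison morphism out of the quotient stack and then producing a quasi-inverse from the sheaf of trivialisations of the target. Write $G = \Aut(\bP^1, \infty) \cong \bG_m \ltimes \bG_a$. Forgetting the identification of the target with $(\bP^1, \infty)$ --- that is, remembering an $S$-map $F : \mc{X} \to \bP^1_S$ of $\fM_{g, n}(\bP^1, \vk)$ only as a cover of the abstract genus $0$ $1$-marked curve $(\bP^1_S, \infty)$ --- defines a morphism $\Phi_0 : \fM_{g, n}(\bP^1, \vk) \to \cH_{g, \vk}^\bullet$. For $\beta \in G$ the cover attached to $\beta \circ F$ is canonically isomorphic to the one attached to $F$, with $\beta$ itself providing the isomorphism of targets; thus $\Phi_0$ is $G$-invariant and, by the universal property of the quotient in the sense of Romagny, descends to a morphism $\Phi : [\fM_{g, n}(\bP^1, \vk)/G] \to \cH_{g, \vk}^\bullet$.

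Next I would construct the quasi-inverse. Given an $S$-object $\phi : \mc{X} \to Y$ of $\cH_{g, \vk}^\bullet$, set $P = \Isom_S\big((Y, q_\infty), (\bP^1_S, \infty)\big)$, the functor of isomorphisms of $1$-marked $S$-curves. Since $Y \to S$ is a genus $0$ curve carrying the section $q_\infty$, pushing forward $\sO_Y(q_\infty)$ exhibits $(Y, q_\infty)$ as a form of $(\bP^1_S, \infty)$; as both $1$-marked curves are projective and flat, $P$ is representable, and the right action of $G$ by precomposition makes $P \to S$ a $G$-torsor which is fppf- (hence, $G$ being smooth, \'etale-) locally trivial. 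Over $P$ the tautological trivialisation $\beta_{\mathrm{univ}} : Y_P \xrightarrow{\sim} \bP^1_P$ turns $\phi_P$ into the stable map $\beta_{\mathrm{univ}} \circ \phi_P \in \fM_{g, n}(\bP^1, \vk)(P)$, and this assignment is $G$-equivariant for the postcomposition action on the target. The pair $(P, \beta_{\mathrm{univ}} \circ \phi_P)$ is an object of $[\fM_{g, n}(\bP^1, \vk)/G]$, and functoriality in $\phi$ yields a morphism $\Psi : \cH_{g, \vk}^\bullet \to [\fM_{g, n}(\bP^1, \vk)/G]$.

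It remains to check that $\Phi$ and $\Psi$ are mutually quasi-inverse, equivalently that $\Phi$ is fully faithful and essentially surjective. Essential surjectivity is immediate from the discussion preceding the statement: every target is fppf-locally isomorphic to $(\bP^1_S, \infty)$, so every object of $\cH_{g, \vk}^\bullet$ lies fppf-locally in the image of $\Phi_0$, and since both sides are stacks this suffices; concretely $\Phi \circ \Psi \cong \id$ because twisting $\beta_{\mathrm{univ}} \circ \phi_P$ back along $P$ recovers $\phi$ by descent, $Y$ being the associated bundle $P \times^G \bP^1$. For full faithfulness I would compare morphism groupoids over a fixed $S$: a morphism of $\cH_{g, \vk}^\bullet$ is a pair $(\alpha, \beta)$ of isomorphisms of domain and target with $\beta \phi = \phi \alpha$, whereas a morphism of the quotient stack is an isomorphism of $G$-torsors together with a compatible $2$-isomorphism of the equivariant maps to $\fM_{g, n}(\bP^1, \vk)$. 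Untwisting identifies $\beta$ with the induced isomorphism of $\Isom$-torsors and $\alpha$ with the resulting isomorphism of stable maps, and these two correspondences are inverse to one another.

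I expect the delicate point to be verifying that $P = \Isom_S\big((Y, q_\infty), (\bP^1_S, \infty)\big)$ is genuinely a $G$-torsor --- that is, representability together with fppf-local triviality of every genus $0$ $1$-marked $S$-curve --- and, hand in hand with this, keeping the left action of $G$ on maps by postcomposition compatible with the right action on $P$ by precomposition so that the equivariance entering the definition of $\Psi$ is the correct one ($\sigma \cdot g \mapsto g^{-1}\cdot(\sigma \circ \phi)$). Everything else is bookkeeping with the $2$-categorical structure of $[\fM_{g, n}(\bP^1, \vk)/G]$.
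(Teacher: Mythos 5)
Your proposal is correct and follows essentially the same route as the paper: the forgetful map is $\Aut(\bP^1,\infty)$-equivariant and descends to the quotient, and the quasi-inverse is built exactly as in the paper by sending $\phi : \mc{X} \rightarrow \mc{Y}$ to the $\Isom$-torsor of trivialisations of the target together with the tautological equivariant map to $\fM_{g,n}(\bP^1,\vk)$. Your extra care about representability and local triviality of the torsor, and about matching the left/right action conventions, only makes explicit what the paper leaves implicit.
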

\begin{proof}
  The forgetful map $\fM_{g, n}(\bP^1, \vk) \rightarrow \cH_{g, \vk}^\bullet$ is $\Aut(\bP^1, \infty)$-equivariant and gives a map from the expected quotient to $\cH_{g, \vk}^\bullet$. Let's build an inverse $\Sigma$. Let $\phi :\mc{X} \rightarrow \mc{Y}$ where $\mc{Y}$ is marked by $E_\infty$ be an $S$-object in $\cH_{g, \vk}^\bullet$. The group scheme $G = \Aut_S((\mc{Y}, E_\infty),(\bP_S^1, \infty))$ is an $\Aut(\bP^1,\infty)$-torsor over $S$. Write $\Sigma(\phi)$ for the the object of $[\fM_{g, n}(\bP^1, \vk)/\Aut(\bP^1, \infty)]$ given by the $S$-torsor $G \rightarrow S$ together with the morphism to $\fM_{g, n}(\bP^1, \vk)$ defined for every $T$-point $\beta$ of $G$ by $\beta \circ \delta^*(\phi)$. An isomorphism $(\alpha, \beta)$ from $\phi$ to $\psi : \mc{Z} \rightarrow \mc{T}$ in $\cH_{g, \vk}^\bullet$ defines an isomorphism from $G$ on $\Aut_S((\mc{T}, E_\infty),(\bP_S^1, \infty))$ coming from the composition (on the left) of a section of $G$ with $\beta^{-1}$. The isomorphism from $\Sigma(\phi)$ on $\Sigma(\psi)$ is thus simply defined by $\alpha$. It is clear $\Sigma$ is the needed inverse. 
\end{proof}
Since $\bG_a$ is normal in $\Aut(\bP^1, \infty)$, following \cite[Remark 2.4]{MRomagnyG} we have that 
\begin{displaymath}
 \cH_{g, \vk}^\bullet = \left[[ \fM_{g,n}(\bP^1, \vk)\left\slash\bG_a\right. ]\left\slash \bG_m\right.\right] = \left[\fZ_{g, \vk}^{\circ} \left\slash \bG_m\right. \right]
\end{displaymath}
where $\fZ_{g, \vk}^{\circ}$ is the open locus in $\fZ_{g, \vk}$ corresponding to underlying smooth curves and the action of $\lambda \in \bG_m(S)$ on $(\pi : \mc{X} \rightarrow S, \underline{\varrho})$ is the one multiplying each polar part by $\lambda$. We've already seen that this action corresponds to the $\bG_m$-action on fibers of $\varkappa$ away from the zero section. We get therefore that $\cH_{g, \vk}^\bullet \subset \bP\fZ_{g, \vk}$. In order to compactify $\cH_{g, \vk}^\bullet$ or $\cH_{g, \vk}$ it would be natural to take their closure in $\bP\fZ_{g, \vk}$. The closure of a substack $\mf{E}$ in a stack $\mf{C}$ (see \cite[050A]{stacks-project}) is the unique reduced stack $\overline{\mf{E}}$ whose underlying set of points $|\overline{\mf{E}}|$ (in the sense of \cite[5]{Laumon}) is the closure in $|\mf{C}|$ of the set of points of $\mf{E}$. Details on the closure of a stack can be found at \cite[050A]{stacks-project}. The closures $\cHh_{g, \vk}^\bullet$ and $\cHh_{g, \vk}$ respectively compactify $\cH_{g, \vk}^\bullet$ and $\cH_{g, \vk}$ if both are reduced stacks. This is the case if the characteristic of the base field $\bs{p}$ is $0$ or does not divide any of the $k_i$s. In this case both $\cH_{g, \vk}^\bullet$ and $\cH_{g, \vk}$ are smooth. This fact results from work of \cite{HurHM} or by standard deformation theoretic arguments as in \cite{Rav01}.
\begin{definition}
  In case the characteristic of the base field $\bs{p}=0$ or $\bs{p}\nmid k_i$ for every $i$ we define the ELSV compactification $\cHh_{g, \vk}$ (resp. $\cHh_{g, \vk}^\bullet$) as the (stacky) closure of $\cH_{g, \vk}$ (resp. $\cH_{g, \vk}^\bullet$) in $\bP\fZ_{g, \vk}$. 
\end{definition}  
Following \cite[3.6]{ELSV} the ELSV compactification is nearly never dense in $\bP\fZ_{g, \vk}$. It is thus natural to try to have a better understanding of its boundary points.

\section{Preliminaries on admissible covers}

Details about the stack of admissible covers can be found in \cite{HurHM} and \cite{HurBR}. We fix here the type of admissible covers we're interested in. 
\begin{tam}
When working with admissible covers we implicitly assume the characteristic of the base field $\bs{p}$ is $0$ or greater than $d$. This ensures that the stacks of admissible covers make sense and are smooth on the locus of underlying smooth curves.
\end{tam}
Let $\bs{\epsilon} =\{\epsilon_j\}_{j=1}^{\bar{r}}$ be a set of partitions of $d$. Write $\fHh_{g,\vk}^{\bs{\epsilon}}$ for the stack of admissible covers whose $S$-points are finite degree $d$ onto $S$-maps $\phi$ from a stable $g$ curve $(\pi : \mc{X} \rightarrow S,  \underline{D}, \{\underline{\bs{D}}_j\}_{j=1}^{\bar{r}})$ on a genus $0$ stable curve $(\eta : \mc{Y}\rightarrow S , E_{\infty}, \{E_j\}_{j=1}^{\bar{r}})$ such that   
\begin{enumerate}
\item for a nodal point $x \in \mc{X}_s$ such that $\phi(x) = y$ one can find \'etale locally $m_x$ in $\N^*$, $t \in \ms{M}_x$ and $T \in \ms{M}_y$ such that
  \begin{displaymath}
    \sO_x \simeq \frac{\sO_s\llbracket u, v\rrbracket}{(uv-t)} \quad \textrm{and} \quad \sO_y \simeq \frac{\sO_s\llbracket U, V\rrbracket}{(UV-T)}  
  \end{displaymath}
and for which $\phi$ is locally defined by $U=u^{m_x}$, $V=v^{m_x}$ and $T = t^{m_x}$. 
\item $\phi$ is \'etale on the smooth locus of $\pi$ away from the marked $S$-points
\item $\phi$ satisfies the scheme theoretic equalities 
  \begin{displaymath}
    \phi^{-1}E_j = \sum_{\ell_j} \epsilon_j(\ell_j)\bs{D}_{j, \ell_j} \quad \textrm{and} \quad \phi^{-1}E_\infty = \sum_{i=1}^n k_iD_i.
  \end{displaymath}
\end{enumerate}
Automorphisms of $\phi$ are automorphisms $(\alpha, \beta)$ of underlying marked curves, possibly exchanging markings $\bs{D}_{j, \ell_j}$ for fixed $j$, such that $\phi \circ \alpha = \beta\circ \phi$. The substack of admissible covers having smooth underlying curves is written $\fH_{g, \vk}^{\bs{\epsilon}}$. It is an open dense substack of the proper stack $\fHh_{g, \vk}^{\bs{\epsilon}}$. In general $\fHh_{g, \vk}^{\bs{\epsilon}}$ is singular, its normalization is however a smooth stack which has been studied in \cite{HurMochi}, \cite{HurACV} and \cite{HurBR}. The stack of admissible covers in the simply ramified case away from $E_\infty$ is written $\fHh_{g, \vk}$. A result of \cite{Wajnryb} states that $\fHh_{g, \vk}$ is irreducible. We denote $\fHh_{g, \vk}^\bullet$ the finite disjoint union of admissible covers over all profiles $\bs{\epsilon}$ for fixed $d$. Each stack of admissible cover appears as a correspondence between two sets of stable curves. By this we mean it is enclosed in a diagram of forgetful maps
\begin{equation}
\label{corsp:H}
\begin{tikzpicture}[baseline=(current  bounding  box.center),>=latex] 
      \matrix (m) 
      [matrix of math nodes, row sep=2.5em, column sep=2.5em, text
      height=1.5ex, text depth=0.25ex]  
      { \fMm_{g, n, \dots} & \fHh_{g, \vk}^{\bs{\epsilon}} & \fMm_{0, \bar{r}+1}. \\};    
      \path[->,font=\scriptsize]  
      (m-1-2) edge (m-1-3)
      (m-1-2) edge (m-1-1);
  \end{tikzpicture}
\end{equation} 
The left hand-side map is the forgetful one keeping only the stable marked domain curve. The right-hand side does keep the target marked stable curve, it is to be understood as an extension of the standard branch divisor of a cover to the case of admissible covers. This extension can also be recovered independently. A branch point of an admissible cover $\phi : X \rightarrow Y$ is a branch point of the induced map on the normalizations of $X$ and $Y$ that lands in the smooth locus of $Y$. Since points in the fibers of branch points of $\phi$ are smooth one can define the branch divisor in the usual way. In general the branch morphism $\Br(\phi)$ of an admissible $S$-cover $\phi : \mc{X} \rightarrow \mc{Y}$ is the $\Div$ of the perfect torsion complex in degrees $-1$ and $0$ given by $\Ri\phi_*[\phi^*\omega_{\mc{Y}/S} \rightarrow \omega_{\mc{X}/S}]$. The morphism $\mr{d}^\omega \phi : \phi^*\omega_{\mc{Y}/S} \rightarrow \omega_{\mc{X}/S}$ is the standard one if $\mc{Y}$ and $\mc{X}$ are smooth over $S$. It does extend to the case of admissible covers \cite{HurBR} and is an isomorphism at the nodal points. The branch divisor is thus supported in the relative smooth locus of $\mc{Y}$. If $\phi$ is an $S$-object in $\fHh_{g, \vk}^{\bs{\epsilon}}$ we have that 
\begin{displaymath}
  \Br(\phi) = (d-n) E_\infty + \sum_{j=1}^{\bar{r}} \big(\sum_{\ell_j}(\epsilon_j(\ell_i)-1)\big) E_j.
\end{displaymath}

Keeping only $E_\infty$ and $\underline{D}$ gives a morphism $\fH_{g, \vk}^\bullet \rightarrow \cH_{g, \vk}^\bullet$. Starting with a point in $\cH_{g, \vk}^\bullet$ one only needs to specify an ordering on branch points away from $\infty$ to get back an admissible cover. Following this idea it is straightforward to see for instance that $\fH_{g, \vk} \rightarrow \cH_{g, \vk}$ is an $\mf{S}_r$-torsor and thus $\cH_{g, \vk} = [\fH_{g, \vk}\slash \mf{S}_r]$. More generally we have an embedding of $[\fH_{g, \vk}^{\bs{\epsilon}}/\mf{S}_{\bar{r}}]$ in $\cH_{g, \vk}^\bullet$. In particular $\fH_{g, \vk}^\bullet \rightarrow \cH_{g, \vk}^\bullet$ is onto and finite.  

\section{Extension of the $\mc{L}\!\mc{L}$ morphism}

\begin{tam}
\label{tam}
From this point on we assume that the characteristic of the base $\bs{p} = 0$ or doesn't divide $r = 2g-2+d+n$ and none of the $k_i$s. The assumption $\bs{p} \nmid r$ is needed to define the $\mc{L}\!\mc{L}$ morphism. 
\end{tam}
Given a  map $F : X \rightarrow \bP^1$ for smooth $X$ the $\mc{L}\!\mc{L}$ morphism introduced in \cite{ELSV} sends $F$ on the divisor supported on finite critical points of $\mr{d}F$. These critical points are branch points of $F$ in $\bP^1\setminus \infty$ and the $\mc{L}\!\mc{L}$ divisor is equal to the branch divisor on this set. In fact this $\mc{L}\!\mc{L}$ divisor is the branch divisor away from $\infty$. 

B.~Fantechi and R.~Pandharipande showed in \cite{BranchFP} that one can extend the definition of the branch divisor to stable maps having smooth target. The key point is to notice that for an $S$-map $F : \mc{X} \rightarrow \mc{Y}$ with $\mc{Y}/S$ smooth, the natural morphism $ \mr{d}^\omega F : F^*\Omega_{\mc{Y}/S} = F^*\omega_{\mc{Y}/S} \rightarrow \Omega_{\mc{X}/S} \rightarrow \omega_{\mc{X}/S}$ defines a perfect torsion complex in degrees $-1$ and $0$
\begin{equation}
  \label{eq:Bdiv}
  \Ri F_*[F^*\omega_{\mc{Y}/S} \rightarrow \omega_{\mc{X}/S}]
\end{equation} 
in $\mc{D}^b(\mc{Y})$. One has to mention that though results of \cite{BranchFP} are stated for $\C$ they're valid in more generality. Indeed the fact \ref{eq:Bdiv} is a perfect torsion complex results from \cite[5 \& 7]{BranchFP} which do not depend on the characteristic (\cite[1.1]{BranchFP}). Following the proof of \cite[9]{BranchFP} what is needed is generic smoothness of $F$ on non-contracted components of closed fibers. Since $F$ is finite this is true as soon as $F$ is separable on each fiber. To make sure this is satisfied we either need $\bs{p}=0$ or $\bs{p} \nmid k_i$ for every $i$ and both are guaranteed by \ref{tam}. We shall therefore use needed results of \cite{BranchFP} in this generality. Following \cite{BranchFP} we have a branch morphism 
\begin{displaymath}
  \begin{tikzpicture}[baseline=(current  bounding  box.center),>=latex]
    \matrix (m) 
    [matrix of math nodes, row sep=2.5em, column sep=2em, text height=1ex, text depth=0.25ex] 
    {  \Br : \fMm_{g, n}(\bP^1, \vk) & \Div^{2g-2+2d}\bP^1\\};  
    \path[->,font=\scriptsize]  
    (m-1-1) edge (m-1-2);
  \end{tikzpicture}
\end{displaymath}
 that sends a stable map in $\fMm_{g,n}(\bP^1, \vk)$ on an effective Cartier divisor on $\bP^1$. It does coincide on the locus of $\fMm_{g,n}(\bP^1, \vk)$ given by smooth underlying curves with the classical branch morphism.   
\begin{lemma}
  \label{lem:aciter}
  The restriction of the branch divisor of a map $F : X \rightarrow \bP^1$ in $\fMm_{g, n}(\bP^1, \vk)$ to $\bP^1\setminus \infty$ is an effective divisor of degree $r=2g-2+d+n$.    
\end{lemma}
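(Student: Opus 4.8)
The plan is to read off the degree of $\Br(F)|_{\bP^1\setminus\infty}$ by subtracting from the total degree of $\Br(F)$ the part concentrated at $\infty$. First I would record the two global inputs. By construction the Fantechi--Pandharipande branch divisor $\Br(F)$ is an \emph{effective} Cartier divisor on $\bP^1$, and its total degree is $2g-2+2d$ --- this is precisely the codomain $\Div^{2g-2+2d}\bP^1$ of $\Br$ recorded above, and it matches Riemann--Hurwitz, $2g-2=d(2\cdot 0-2)+\deg\Br(F)$, for a degree $d$ map to a genus $0$ curve. Restricting an effective divisor to the open subscheme $\bP^1\setminus\infty$ keeps it effective, so it remains only to compute the multiplicity $\mu_\infty$ of $\Br(F)$ at $\infty$ and to check $(2g-2+2d)-\mu_\infty=r$.

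The heart of the argument is the local structure of $F$ over $\infty$. By definition of $\fMm_{g,n}(\bP^1,\vk)$ we have $F^{-1}\infty=\sum_{i=1}^n k_iD_i$ scheme theoretically; since this is a Cartier divisor supported on the marked sections, no component of $X$ is contracted to $\infty$ and no node of $X$ lies over $\infty$. As the $D_i$ sit in the smooth locus of $X$, the preimage of a small neighborhood $U$ of $\infty$ is a disjoint union of smooth neighborhoods of the $D_i$, and in local coordinates $w$ at $\infty$ and $t_i$ at $D_i$ the relation $F^*w=u_it_i^{k_i}$ (with $u_i$ a unit) exhibits $F$ as $t_i\mapsto t_i^{k_i}$. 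By Tame Assumption~\ref{tam} each $k_i$ is prime to $\bs{p}$, so this cover is tame. Now $\Div$ of the torsion complex $\Ri F_*[F^*\omega_{\bP^1/\bbk}\to\omega_X]$ is local on the target, so $\mu_\infty$ depends only on $F$ over $U$; there source and target are smooth, the complex recovers the classical branch divisor, and the tame model $t_i\mapsto t_i^{k_i}$ contributes $k_i-1$ at $\infty$. Summing,
\[
\mu_\infty=\sum_{i=1}^n(k_i-1)=\Big(\sum_{i=1}^n k_i\Big)-n=d-n,
\]
in agreement with the coefficient of $E_\infty$ in the admissible-cover branch divisor computed earlier. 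Hence
\[
\deg\big(\Br(F)|_{\bP^1\setminus\infty}\big)=(2g-2+2d)-(d-n)=2g-2+d+n=r .
\]

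The one step I expect to need genuine care is the reduction of $\mu_\infty$ to the classical count: namely that $\Div$ of the Fantechi--Pandharipande torsion complex is local on $\bP^1$, and that over the locus where $F$ is finite, flat and tamely ramified between smooth curves it coincides with the classical branch divisor carrying the expected local multiplicities $k_i-1$. Everything else is bookkeeping with Riemann--Hurwitz together with the effectivity of $\Br(F)$ already built into the branch morphism.
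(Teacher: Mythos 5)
Your proof is correct, and it reaches the count by a somewhat different route than the paper. The paper invokes the component decomposition of \cite[10 \& 11]{BranchFP}, writing $\Br(F) = 2F_*\mf{n} + \sum_{F_v\,\mathrm{const}}\Br(F_v) + \sum_{F_v\,\mathrm{dom}}\Br(F_v)$, observes that the first two terms are effective and cannot meet $\infty$ (nodes and contracted components cannot map to $\infty$ because $F^{-1}\infty=\sum_i k_iD_i$ is supported at the smooth marked points), and locates the entire $\infty$-contribution $(d-n)\infty$ inside the dominating-component term, a component dominating $\bP^1$ being exactly one carrying a marked point. You instead take effectivity and the total degree $2g-2+2d$ as already packaged in the target $\Div^{2g-2+2d}\bP^1$ of the branch morphism, and compute the multiplicity at $\infty$ by restricting the torsion complex to an open neighbourhood of $\infty$; the locality of $\Div$ you flag as the delicate step is \cite[1 v.]{BranchFP}, which is precisely the device the paper itself deploys later in the proof of theorem \ref{thm:aciter}, so your argument is entirely within the paper's toolkit. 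Both proofs ultimately rest on the same two facts --- the scheme-theoretic equality $F^{-1}\infty=\sum_ik_iD_i$ forcing the fibre over $\infty$ into the smooth non-contracted locus, and tameness $\bs{p}\nmid k_i$ giving local multiplicity $k_i-1$ --- and you invoke the Tame Assumption at the right spot. One small imprecision: for an open $U\ni\infty$ the preimage $F^{-1}(U)$ is not a disjoint union of neighbourhoods of the $D_i$ (it contains everything mapping into $U$); what is true, and all you need, is that after shrinking $U$ away from the finitely many images of nodes and of contracted components, $F^{-1}(U)\to U$ is a finite map of smooth curves whose fibre over $\infty$ is $\sum_ik_iD_i$, so the classical computation applies. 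Your version buys a cleaner "total minus local" bookkeeping at the cost of leaning on the global effectivity statement, which the paper's term-by-term decomposition re-derives in passing.
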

\begin{proof}
  Let $F_v$ be the restriction of $F$ to the connected component $X_v$ in the normalization of $X$ and write $\mf{n}$ for the Weil divisor of nodal points in $X$. Following \cite[10 \& 11]{BranchFP} 
  \begin{displaymath}
    \Br(F) = 2F_{*}\mf{n} + \sum_{\{v\, \mid\, F_v \,\text{is constant}\}} \Br(F_{v})
    + \sum_{\{v \, \mid\, F_v\,\text{dominates $\bP^1$}\}} \Br(F_{v}).  
\end{displaymath}
The sum of the first two terms is an effective divisor. Now $F_v$ dominates $\bP^1$ if and only if $X_v$ contains a marked point. The last term thus contains$(d-n)\infty$ and this is the only contribution of $\infty$ to $\Br(F)$. We get that $\Br(F)-(d-n)\infty$ is effective of expected degree. 
\end{proof}
\begin{theorem}
  \label{thm:aciter}
  Let $F : \mc{X} \rightarrow \bP_S^1$ be an $S$-map in $\fMm_{g, n}(\bP^1, \vk)$ we have that 
  \begin{equation}
    \Br(F) = \Br(F)_{\fZ} + (d-n)\infty
  \end{equation}
  where $\Br(F)_{\fZ}$ is an effective relative divisor disjoint of $\infty$ along the fibers and of degree $r$.
\end{theorem}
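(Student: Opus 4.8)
The plan is to combine the base-change compatibility of the branch morphism with a purely local computation of $\Br(F)$ along the section $\infty_S=\{\infty\}\times S$, the rest being bookkeeping with relative effective Cartier divisors. Recall that, being a morphism to $\Div^{2g-2+2d}\bP^1$, the branch morphism attaches to $F$ a relative effective Cartier divisor on $\bP^1_S/S$ of constant fibre degree $2g-2+2d$ whose formation commutes with arbitrary base change (\cite{BranchFP}); in particular its restriction to a fibre $\bP^1_s$ is the branch divisor of the fibre map $F_s$, which is an object of $\fMm_{g,n}(\bP^1,\vk)$ over $\kappa(s)$.

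First I would isolate $F$ near $\infty_S$. No component of any fibre of $\mc{X}$ can be contracted onto $\infty$, since $F^{-1}\infty=\sum_i k_iD_i$ is finite over $S$ by the defining condition \ref{conditionprofile}; hence $F$ is quasi-finite, and being projective it is finite in a neighbourhood of $\infty_S$. Properness then lets me choose a relative neighbourhood $U$ of $\infty_S$ with $F^{-1}(U)\subset \sqcup_{i=1}^n W_i$, where $W_i$ is a relatively smooth neighbourhood of the section $D_i$. On $W_i$ the map is given by $w=\phi_i(z)$ for a relative coordinate $z$ cutting out $D_i$ and a coordinate $w$ cutting out $\infty_S$, with $V(\phi_i)=k_iD_i$, i.e. $\phi_i=z^{k_i}u_i$ for a unit $u_i$.

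The key step is the relative computation of the branch contribution on $W_i$, and here the Tame Assumption \ref{tam} enters decisively. Since $\bs{p}\nmid k_i$, the differential $\mr{d}^\omega F(dw)=\phi_i'(z)\,dz=z^{k_i-1}(k_iu_i+zu_i')\,dz$ has $k_iu_i+zu_i'$ invertible along $D_i$, so $\mr{d}^\omega F$ is injective with cokernel exactly $\sO_{(k_i-1)D_i}$; equivalently $F|_{W_i}$ is the standard power map up to a relative change of coordinate. Thus the torsion complex \ref{eq:Bdiv} is supported on $(k_i-1)D_i$, and as $F|_{D_i}\colon D_i\xrightarrow{\ \sim\ }\infty_S$, the divisor $\Div(\Ri F_*[F^*\omega\to\omega])$ of this power map is its relative discriminant $(k_i-1)\infty_S$. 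Additivity of $\Div\Ri F_*$ over the disjoint union $F^{-1}(U)=\sqcup_i W_i$ then yields $\Br(F)|_U=\sum_{i=1}^n(k_i-1)\infty_S=(d-n)\infty_S$ as relative Cartier divisors on $U$, using $\sum_i k_i=d$.

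It remains to package this. Writing $t$ for a local equation of $\infty_S$ and $f$ for that of $\Br(F)$ near $\infty_S$, the previous step gives $f=t^{d-n}g$ with $g$ invertible along $\infty_S$; away from $\infty_S$ the section $t$ is a unit, so $g$ defines a global effective Cartier divisor $\Br(F)_{\fZ}:=\Br(F)-(d-n)\infty_S$ that vanishes identically on $U$ and agrees with $\Br(F)$ off $\infty_S$. By construction $\Br(F)_{\fZ}$ is effective and disjoint from $\infty_S$; its fibres are the divisors $\Br(F_s)-(d-n)\infty_s$, which by Lemma \ref{lem:aciter} are disjoint from $\infty$ of constant degree $r=2g-2+2d-(d-n)$, so $\Br(F)_{\fZ}$ is $S$-flat and hence a relative effective Cartier divisor of degree $r$. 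This is exactly the asserted decomposition. The only genuinely delicate point is the relative computation on $W_i$ over a possibly non-reduced base, where one cannot argue fibrewise and must instead exhibit the power-map normal form, which is precisely what the hypothesis $\bs{p}\nmid k_i$ makes available.
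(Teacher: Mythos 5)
Your proof is correct, and it takes a genuinely different route from the paper's. The paper argues globally on $\mc{X}$: it introduces $\nabla=\sum_i(k_i-1)D_i$, observes that $\mr{d}^\omega F$ factors through $\omega_{\mc{X}/S}(-\nabla)\rightarrow\omega_{\mc{X}/S}$, and splits the branch complex by a distinguished triangle into $[F^*\omega_{\bP^1_S}\rightarrow\omega_{\mc{X}/S}(-\nabla)]$ --- shown to agree with the restriction of the branch complex to $\bP^1_S\setminus\infty$, hence with effective $\Div$ supported away from $\infty$ --- plus $[\omega_{\mc{X}/S}\otimes\sO_{\nabla}]$, whose $\Div$ is $(d-n)\infty$. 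You instead localize at $\infty_S$: using properness to find a neighbourhood $U$ of $\infty_S$ over which $F$ is finite and supported in the relative smooth locus, you put $F$ in the normal form $z^{k_i}u_i$ and compute $\Br(F)\mid_U=(d-n)\infty_S$ outright, after which $\Br(F)_{\fZ}=\Br(F)-(d-n)\infty_S$ is effective and disjoint from $\infty$ essentially for free. Both arguments turn on the same local input --- the Tame Assumption $\bs{p}\nmid k_i$ forcing $\mr{d}F$ to vanish to order exactly $k_i-1$ along $D_i$ --- but your packaging trades the distinguished triangle and the acyclicity discussion near $\nabla$ for the (routine) tube-lemma localisation and the identification $\Div F_*\sO_{(k_i-1)D_i}=(k_i-1)\infty_S$ for a finite flat sheaf of rank $k_i-1$ over $\infty_S$. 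A minor remark: in your version lemma \ref{lem:aciter} is only needed for the degree count, since disjointness from $\infty$ is already delivered by the local computation; and the existence of the relative coordinate $z$ is only Zariski-local on $S$, which is harmless since the asserted decomposition is local on the base.
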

\begin{proof}  
  Following lemma \ref{lem:aciter} it is enough to show $\Br(F)$ contains $(d-n)\infty$, i.e. that there exists a decomposition $\Br(F) = W + (n-d)\infty$ for an effective relative divisor $W$ away from $\infty$. Let $\nabla$ be the divisor $\nabla = \sum_{i=1}^n (k_i-1)D_i$. The morphism $\mr{d}^\omega F : F^*\omega_{\bP^1_S} \rightarrow \omega_{\mc{X}/S}$ is equal at the neighborhood of a point in the support of $\nabla$ to the classical $\mr{d}F : F^*\Omega_{\bP^1_S} \rightarrow \Omega_{\mc{X}/S}$. In particular $\mr{d}^\omega F$ factors through $\Omega_{\mc{X}/S}(-\nabla) \rightarrow \Omega_{\mc{X}/S}$. We get the distinguished triangle 
  \begin{displaymath}
    \begin{tikzpicture}[baseline=(current  bounding  box.center),>=latex]
      \matrix (m) 
      [matrix of math nodes, row sep=2.5em, column sep=2em, text
      height=1ex, text depth=0.25ex]  
      {F^*\omega_{\bP^1_S} & F^*\omega_{\bP^1_S} & 0\\ 
       \omega_{\mc{X}/S}(-\nabla) & \omega_{\mc{X}/S} &
       \omega_{\mc{X}/S} \otimes \sO_{\nabla}.\\}; 
      \path[->,font=\scriptsize]  
      (m-1-2) edge (m-1-3)
      (m-2-1) edge (m-2-2)
      (m-2-2) edge (m-2-3)
      (m-1-1) edge node[left] {$\mr{d}^\omega F$} (m-2-1)
      (m-1-2) edge node[auto] {$\mr{d}^\omega F$} (m-2-2)
      (m-1-3) edge (m-2-3);
      \draw[double, double distance=1.5 pt, font=\scriptsize]
      (m-1-1) -- (m-1-2);
    \end{tikzpicture}
  \end{displaymath}
  The direct images of each vertical complex in degrees $-1$ and $0$ by $\Ri F_*$define perfect torsion complexes (\cite[6, 7 \& 9]{BranchFP}). By standard properties of the $\Div$ operator we get that 
  \begin{align*}
    \Br(F) & = \Div \Ri F_*\Big[F^*\omega_{\bP^1_S} \rightarrow \omega_{\mc{X}/S}\Big]\\
& = \underbrace{\Div \Ri F_*\Big[F^*\omega_{\bP^1_S} \rightarrow \omega_{\mc{X}/S}(-\nabla)\Big]}_{(\star\star)} + \underbrace{\Div\Ri F_*\Big[\omega_{\mc{X}/S}\otimes \sO_{\nabla}\Big]}_{(n-d)\infty}. 
  \end{align*}
Let's show $(\star\star)$ is an effective relative divisor having support away from $\infty$. At the neighborhood of $\nabla$ the sequence $F^*\omega_{\bP^1_S} \rightarrow \omega_{\mc{X}/S} \rightarrow \omega_{\mc{X}/S}\otimes \sO_{\nabla}$ is the exact sequence 
\begin{equation}
  \label{eq:suiteblabla}
  \begin{tikzpicture}[baseline=(current  bounding  box.center),>=latex]
    \matrix (m) 
    [matrix of math nodes, row sep=2.5em, column sep=2em, text
    height=1ex, text depth=0.25ex]  
    {  0 &  F^*\Omega_{\bP^1_S} & \Omega_{\mc{X}/S}  &
      \Omega_{\mc{X}/S}\otimes \sO_{\nabla} & 0. \\}; 
    \path[->,font=\scriptsize]  
    (m-1-1) edge (m-1-2)
    (m-1-2) edge node[above] {$\mr{d}F$} (m-1-3)
    (m-1-3) edge (m-1-4)
    (m-1-4) edge (m-1-5);
  \end{tikzpicture}
\end{equation}
In particular, in the neighborhood of $\nabla$ we have that $[F^*\omega_{\bP^1_S} \rightarrow \omega_{\mc{X}/S}]$ is quasi-isomorphic to $\omega_{\mc{X}/S}\otimes \sO_{\nabla}$. Now $F^*\omega_{\bP^1_S} \rightarrow \omega_{\mc{X}/S}(-\nabla)$ is $\mr{d}F$ restricted to its image. Therefore , in the neighborhood of $\nabla$, $[F^*\omega_{\bP^1_S} \rightarrow \omega_{\mc{X}/S}(-\nabla)]$ is acyclic and, since $[\omega_{\mc{X}/S} \otimes \sO_{\nabla}]=0$ away from $\nabla$ we have that $[F^*\omega_{\bP^1_S}\rightarrow \omega_{\mc{X}/S}(-\nabla)]=[F^*\omega_{\bP^1_S} \rightarrow \omega_{\mc{X}/S}]$. In conclusion $[F^*\omega_{\bP^1_S} \rightarrow \omega_{\mc{X}/S}]$ is equal to the restriction of $[F^*\omega_{\bP^1_S} \rightarrow \omega_{\mc{X}/S}]$ to $U = \bP^1_S\setminus \infty$. According to proposition \cite[1 v.]{BranchFP} the branch divisor of this restriction complex is is the restriction along $U\rightarrow \bP^1$ of the branch divisor of $F$. In particular $(\star\star)$ is effective.
\end{proof}

Using the branch divisor we build a section of the $\bG_a$-torsor $\fMm_{g, n}(\bP^1, \vk) \rightarrow \fZ_{g, \vk}$. This way one can define the branch divisor of a polar part $(\pi : \mc{X} \rightarrow S, \underline{\varrho}) \in \fZ_{g, \vk}$. We focus first on the local case and assume that $S$ is the spectrum of a local $\bbk$-algebra $\bs{A}$ of maximal ideal $\mf{m}$. Let $\zeta$ be a lift of $\underline{\varrho}$ to a global section of $\sO_{\mc{X}}(\sum_ik_iD_i)$ and write $F_{\zeta}$ the map to $\bP^1_S$ in $\fMm_{g, n}(\bP^1, \vk)$ defined by $\{\zeta, \bs{1}\}$. 
\begin{lemma}
  \label{lem:algpoly}
Assume $\bs{A}$ is a local henselian ring and let $D$ be an effective relative divisor of $\bA_{\bs{A}}^1$ defined by $\bs{I}\subset \bs{A}[t]$ and having constant degree $d$ along the fibers. Then $\bs{I} = (P(t))$ for a unitary polynomial in $\bs{A}[t]$. In particular $D$ defines a relative divisor of $\bP^1$ supported in $\bP^1\setminus \infty$.   
\end{lemma}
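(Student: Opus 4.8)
The plan is to reduce everything to the statement that $\bs B = \bs A[t]/\bs I$ is a \emph{finite free} $\bs A$-module of rank $d$, and then to produce the monic generator of $\bs I$ by a characteristic polynomial argument. Write $\mf m$ for the maximal ideal of $\bs A$ and $\kappa = \bs A/\mf m$ for its residue field. Since $D$ is a relative effective Cartier divisor, its structure sheaf is flat over $\bs A$, so $\bs B$ is a flat $\bs A$-algebra of finite type with every fibre $\bs B\otimes_{\bs A}\kappa(s)$ of length $d$; in particular $D \to \Spec \bs A$ is flat and quasi-finite, and it is separated as a closed subscheme of $\bA^1_{\bs A}$.

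The main point, and the one place where the henselian hypothesis is genuinely used, is to upgrade quasi-finiteness to finiteness. Applying the structure theorem for quasi-finite separated morphisms over a henselian local base (\cite[I.4.2]{MilneEtCoh}) gives a decomposition $D = D_0 \sqcup D'$ in which $D_0 \to \Spec \bs A$ is finite and the closed fibre of $D'$ is empty. Being open and closed in the flat scheme $D$, the piece $D_0$ is itself flat and finite over $\bs A$, hence finite free of some rank $e$; comparing with the closed fibre of $D$ (where $D'$ contributes nothing) gives $e = d$, so every fibre $(D_0)_s$ has length $d$. As $D_0 \subseteq D$ and $D$ has every fibre of length exactly $d$, the complement $D'$ must have empty fibre at each point, so $D' = \emptyset$. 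Therefore $D = D_0$ is finite over $\bs A$ and $\bs B$ is finite flat, hence finite free of rank $d$ because $\bs A$ is local.

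With finiteness in hand the rest is formal. Let $\theta$ be the image of $t$ in $\bs B$ and let $P(t)=\det\!\big(t\cdot\id - \theta \mid \bs B\big)\in\bs A[t]$ be the characteristic polynomial of multiplication by $\theta$ on the free module $\bs B$; it is monic of degree $d$, and Cayley--Hamilton gives $P(\theta)=0$, i.e. $P\in\bs I$, so $(P)\subseteq\bs I$. Both $\bs A[t]/(P)$ and $\bs B$ are free of rank $d$, so the surjection $\bs A[t]/(P)\twoheadrightarrow\bs B$ splits and its kernel $\bs I/(P)$ is a direct summand of a finite free module, hence finitely generated; tensoring the resulting short exact sequence with $\kappa$ and using flatness of $\bs B$ shows $\big(\bs I/(P)\big)\otimes_{\bs A}\kappa=0$, whence $\bs I/(P)=0$ by Nakayama. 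Thus $\bs I=(P)$ with $P$ monic of degree $d$. Finally, monicity means the leading coefficient of $P$ is a unit, so $\bs I$ is generated near $t=\infty$ by an invertible element; hence the closure of $D$ in $\bP^1_{\bs A}$ avoids the section at infinity and $D$ defines a relative divisor of $\bP^1$ supported in $\bP^1\setminus\infty$. The only real obstacle is the finiteness step, which is precisely why $\bs A$ is taken henselian.
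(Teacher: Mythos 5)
Your proof is correct. The first half coincides with the paper's: both of you invoke the structure theorem for separated quasi-finite schemes over a henselian local base (you cite Milne, the paper cites Raynaud for the algebraic form $\bs{B} = \prod_j \bs{B}_{[\eta_j]} \times \bs{C}$) and then use the constancy of the fibre degree to kill the non-finite part, concluding that $\bs{B}$ is finite free of rank $d$. The second half is genuinely different. The paper takes $P$ to be a lift to $\bs{I}$ of a generator of the principal ideal $\bs{I}\otimes\bbk \subset \bbk[t]$, and then proves $\bs{I}=(P)$ by localizing at every maximal ideal of $\bs{A}[t]$, applying the local criterion of flatness to see that $\bs{A}[t]_{\bs{M}}/(P)$ is $\bs{A}$-flat and $P$ is a nonzerodivisor, and finishing with Nakayama. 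You instead take $P$ to be the characteristic polynomial of multiplication by $t$ on the free module $\bs{B}$, so that Cayley--Hamilton puts a \emph{monic} degree-$d$ polynomial into $\bs{I}$ from the start, and then conclude by a rank count: the surjection $\bs{A}[t]/(P)\twoheadrightarrow\bs{B}$ splits since $\bs{B}$ is projective, so $\bs{I}/(P)$ is a finitely generated direct summand that dies modulo $\mf{m}$, hence vanishes by Nakayama. Your route is arguably tidier on one point the paper glosses over: the lemma asserts the generator is unitary, and an arbitrary lift of a generator of $\bs{I}\otimes\bbk$ need not be monic without further adjustment, whereas the characteristic polynomial is monic by construction. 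The paper's localization argument, on the other hand, avoids any appeal to the splitting and works maximal ideal by maximal ideal. Both are complete proofs.
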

\begin{proof}
Let $\bs{B}$ be the algebra $\bs{A}[t]/\bs{I}$ defined by $D$. Since $D$ is a relative divisor over $\Spec(\bs{A})$ the morphism $\Spec(\bs{B}) \rightarrow \Spec(\bs{A})$ is quasi-finite and of finite type, in particular quasi-finite at each point $[\eta]$ of $\Spec(\bs{B})$ over $[\mf{m}]$. Following \cite[3]{RaynaudAH} page 76 we have $\bs{B} = \prod_{j=1}^m \bs{B}_{[\eta_j]} \times \bs{C}$, where $[\eta_j]$ are the points over $[\mf{m}]$ and $\bs{C}$ an $\bs{A}$-algebra whose restriction to the central fiber is $0$. Each $\bs{B}_{[\eta_i]}$ is finite and thus free of rank $d_i$ over $\bs{A}$. The assumption on the degree of $D$ along the fibers implies $\bs{C} = 0$ and consequently $\bs{B}$ is finite and free of rank $d$ over $\bs{A}$. 

Since $\bs{B}$ is flat over $\bs{A}$, $\bs{I}\otimes \bbk$ is a (principal) ideal of $\bbk [t]$. Let $P(t) \in \bs{I}$ be a lift of a generator of $\bs{I}\otimes \bbk$. We claim that $\bs{I} = (P(t))$. Each maximal ideal $\bs{M}$ of $\bs{A}[t]$ is an ideal over $\mf{m}$. Localize $\bs{A}[t]$ at $\bs{M}$. By assumption the restriction of the multiplicative endomorphism defined by $P$ to the central fiber is injective. Following the local criterium of flatness (\cite[(10.2.4)]{EGA31}) and since $\bs{A}[t]_{\bs{M}}$ is flat over $\bs{A}$, we get that $P$ still defines an injective endomorphism on $\bs{A}[t]_{\bs{M}}$  (i.e. $P$ is not a zero divisor) and that $\bs{A}[t]_{\bs{M}}/(P)$ is flat on $\bs{A}$.  The exact sequence
\begin{displaymath}
\begin{tikzpicture}[baseline=(current  bounding  box.center),>=latex]
    \matrix (m) 
    [matrix of math nodes, row sep=2.5em, column sep=2em, text height=1ex, text depth=0.25ex] 
    {  0 &  \bs{I}/(P) & \bs{A}[t]/(P) & \bs{A}[t]/\bs{I} & 0. \\}; 
    \path[->,font=\scriptsize]  
    (m-1-1) edge (m-1-2)
    (m-1-2) edge (m-1-3)
    (m-1-3) edge (m-1-4)
    (m-1-4) edge (m-1-5);
  \end{tikzpicture}  
\end{displaymath}
says $(\bs{I}/(P))_{\bs{M}} \otimes \bbk = 0$ for any $\bs{M}$. By Nakayama's lemma $\bs{I}_{\bs{M}} = (P)_{\bs{M}}$ and thus $\bs{I} = (P)$.
\end{proof}
Following theorem \ref{thm:aciter} the divisor $\Br_{\fZ}(F_\zeta)$ has constant degree along the fibers. Lemma \ref{lem:algpoly} implies the ideal of $\bA^1_{\bs{A}} = \bP^1\setminus \infty$ defined by $\Br_{\fZ}(\Phi_\zeta)$ is generated by $P(t) = b_r t^r + b_{r-1}t^{r-1} + \cdots + b_0$ with $b_r \in \bs{A}^*$.
\begin{definition}
\label{defn:normalized}
For general $S \in \Sch/\bbk$ we say $F : \mc{X} \rightarrow \bP^1_S$ in $\fMm_{g, n}(\bP^1, \vk)$ is normalized if locally on the base there is a local equation of $\Br_{\fZ}(F)$ has zero $r-1$ coefficient. If $F$ is of the form $F_{\zeta}$ for a global section $\zeta$ of $\sO_{\mc{X}}(\sum_ik_iD_i)$ we say $\zeta$ is normalized if $F_{\zeta}$ is.
\end{definition}
We shall further assume $\bs{A}$ is reduced. In this case the fact a local equation of $\Br(F_\zeta)$ has zero $r-1$ coefficient does not depend on the local equation. Indeed if $P(t)$ is such a local equation any other equation is of the form $\lambda P(t)$ for $\lambda \in \bs{A}[t]^* =\bs{A}^*$.
\begin{proposition}
  \label{prop:recollement}
  Let $(\pi : \mc{X} \rightarrow S, \underline{\varrho})$ be a polar curve in $\fZ_{g, \vk}$ over a local reduced scheme. There is a unique normalized lift of $\underline{\varrho}$ to $\Hi^0(\sO_{\mc{X}}(\sum_{i=1}^nk_iD_i))$. 
\end{proposition}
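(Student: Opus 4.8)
The plan is to exhibit the set of lifts of $\underline{\varrho}$ as a torsor under $\bs{A}$ and to show that the normalization condition cuts out a single point of this torsor, the cut being governed by the sum of the branch points, which depends affinely on the torsor parameter. First I would record, from the local exact sequence \ref{equationablalocal} together with the fact that $(\mc{X}, \underline{\varrho})$ lies in $\fZ_{g, \vk}$, that a lift $\zeta \in \Hi^0(\sO_{\mc{X}}(\sum_i k_iD_i))$ exists and that any two lifts differ by an element $a \in \bs{A}$ acting by $\zeta \mapsto \zeta + a\bs{1}$; this is the torsor structure coming from the kernel $\bs{A}\cdot\bs{1}$ of the map to $\Hi^0(\sP_{\mc{X},\vk})$. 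To each lift $\zeta$ I attach the stable map $F_\zeta$ and, via theorem \ref{thm:aciter}, its branch divisor away from infinity $\Br_{\fZ}(F_\zeta)$, a relative effective Cartier divisor on $\bA^1_S = \bP^1_S \setminus \{\infty\}$ that is finite and flat of degree $r$ over $S$.

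The heart of the argument is the affine-linear behaviour of the branch divisor under the torsor action. Since $F_{\zeta + a\bs{1}}$ is the post-composition of $F_\zeta$ with the translation $z \mapsto z + a$ of the target fixing infinity (precisely the $\bG_a$-action used in the $\bG_a$-torsor description of $\fZ_{g, \vk}$), and since $\mr{d}^\omega$ is natural under isomorphisms of the target, the branch divisor translates: $\Br_{\fZ}(F_{\zeta + a\bs{1}}) = \Br_{\fZ}(F_\zeta) + a$. From this divisor I extract a single invariant: as $\Br_{\fZ}(F_\zeta)$ is finite flat of degree $r$, its structure sheaf is a free $\bs{A}$-algebra $\bs{B}$ of rank $r$, and writing $\bar t$ for the image of the affine coordinate I set $\sigma_1(\zeta) = \mathrm{Tr}_{\bs{B}/\bs{A}}(\bar t) \in \bs{A}$. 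When $\bs{A}$ is henselian lemma \ref{lem:algpoly} writes $\Br_{\fZ}(F_\zeta) = V(P)$ with $P = b_rt^r + b_{r-1}t^{r-1} + \cdots$ and $b_r \in \bs{A}^*$, so $\sigma_1(\zeta) = -b_{r-1}/b_r$; hence $\sigma_1(\zeta) = 0$ is exactly the normalization condition of definition \ref{defn:normalized}, and over reduced $\bs{A}$ this identification does not depend on the chosen local equation. The translation formula then yields the key relation $\sigma_1(\zeta + a\bs{1}) = \sigma_1(\zeta) + r a$.

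Existence and uniqueness follow formally: by tame assumption \ref{tam} the integer $r$ is invertible in $\bs{A}$, so $\sigma_1(\zeta) + ra = 0$ has the unique solution $a = -\sigma_1(\zeta)/r$, and $\zeta - (\sigma_1(\zeta)/r)\bs{1}$ is the unique normalized lift. The main obstacle is not the torsor bookkeeping but justifying cleanly that post-composing with the translation by $a$ shifts $\Br_{\fZ}$ by $a$ at the level of the perfect torsion complex $\Ri F_*[F^*\omega_{\mc{X}/S} \to \omega_{\mc{X}/S}]$, and that the trace $\sigma_1$ is globally well defined over the possibly non-henselian local base $S$; both reduce to the naturality of $\mr{d}^\omega F$ under automorphisms of $\bP^1$ fixing infinity and to finite flatness of the branch divisor, after which invertibility of $r$ closes the argument.
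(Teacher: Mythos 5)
Your proof is correct and follows essentially the same route as the paper's: both exhibit the lifts as an $\bs{A}$-torsor, observe that translation by $a$ shifts the degree-$(r-1)$ coefficient of a generator of $\Br_{\fZ}(F_\zeta)$ by $r b_r a$, and conclude by invertibility of $r$ under the tame assumption. Your packaging of the normalization condition as the vanishing of the trace $\mathrm{Tr}_{\bs{B}/\bs{A}}(\bar t)$ is a clean, generator-independent reformulation of the paper's remark that any two local equations differ by a unit over a reduced base, but it is not a different argument.
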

\begin{proof}
  Write $S = \Spec(\bs{A})$ and let $\zeta$ be a lift of $\underline{\varrho}$. Denote $\varepsilon_{a}$ translation by $a$ on $(\bP_{\bs{A}}^1, \infty)$. Since $\varepsilon_a$ is an automorphism we get that   
  \begin{displaymath}
    \Br_{\fZ}(\varepsilon_a\circ F_{\zeta}) = \Div\varepsilon_{a *}\Ri F_{\zeta*}[(F_{\zeta})^*\omega_{\bP_{S}^1/S} \rightarrow \omega_{\mc{X}/S}] = \varepsilon_{a *}\Div\Ri F_{\zeta*}[(F_{\zeta})^*\omega_{\bP_{S}^1/S} \rightarrow \omega_{\mc{X}/S}] 
\end{displaymath}
and this is $\varepsilon_{a *}\Br_{\fZ}(F_{\zeta})$. By lemma \ref{lem:algpoly} $\Br_{\fZ}(F_{\zeta})$ is generated by a unitary polynomial $P(t) = \sum_{0 \leq \ell \leq r} b_\ell t^\ell$ of degree $r$. The polynomial $P(t+a)$ is a generator of $\Br_{\fZ}(F_{\zeta+ a\bs{1}})$ whose $r-1$ coefficient is $b_{r-1} + r b_r a$. By assumption \ref{tam} $r$ is not zero in $\bbk$ and $b_r \in \bs{A}^*$, thus there is a unique $a$ for which $P(t)$ has zero $r-1$ coefficient given by $a= -(r b_r)^{-1}b_{r-1}$. Any other equation of $\Br(F_\zeta)$ is of the form $\lambda P(t)$ with $\lambda \in \bs{A}^*$ and $a= -(r b_r)^{-1}b_{r-1}$ works as well.   
\end{proof}
\begin{corollary}
  \label{cor:previous}
  Let $T$ and $S$ be reduced schemes and $(\pi : \mc{X} \rightarrow S, \underline{\varrho})$ be in $\fZ_{g, \vk}$. There is a unique normalized lift $\zeta_{\underline{\varrho}}$ of $\underline{\varrho}$ and for $f : T \rightarrow S$ we have $f^*\zeta_{\underline{\varrho}} = \zeta_{f^*\underline{\varrho}}$. 
\end{corollary}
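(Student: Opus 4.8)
The plan is to bootstrap the local statement of Proposition \ref{prop:recollement} to an arbitrary reduced base by a gluing argument whose glue is provided by uniqueness, and then to read off the base-change formula from that same uniqueness. Two facts drive everything: first, that the defining exact sequence \ref{equationabla} and the branch divisor $\Br_{\fZ}$ both commute with base change (the former by cohomology and base change for the flat proper $\pi$, the latter because $\Br_{\fZ}(F)$ is the $\Div$ of $\Ri F_*$ of a perfect torsion complex and, following \cite{BranchFP}, this formation is compatible with pullback); and second, that by Tame Assumption \ref{tam} the integer $r$ is invertible in every $\bbk$-algebra in sight.

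First I would record how lifts and the normalization condition behave under a morphism $f : T \to S$. Since \ref{equationabla} commutes with base change, pulling back a lift $\zeta$ of $\underline{\varrho}$ yields a lift $f^*\zeta$ of $f^*\underline{\varrho}$; and since $\Br_{\fZ}(F_{f^*\zeta}) = f^*\Br_{\fZ}(F_\zeta)$, a monic local equation of $\Br_{\fZ}(F_\zeta)$ pulls back to a monic local equation of $\Br_{\fZ}(F_{f^*\zeta})$ whose coefficient of $t^{r-1}$ is the pullback of the original one. In particular $f^*$ carries normalized lifts to normalized lifts. Uniqueness over a reduced $S$ then follows by testing on stalks: two normalized lifts differ by a global section $a \in \Hi^0(S,\sO_S)$ by \ref{equationabla}, and for each $s \in S$ the reduced local ring $\sO_{S,s}$ lets me invoke Proposition \ref{prop:recollement}, which forces the two pullbacks to agree and hence $a_s = 0$; as this holds at every point, $a = 0$.

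For existence over a reduced scheme $S$ I would work on an affine cover $\{U_\ell = \Spec \bs{A}_\ell\}$ with each $\bs{A}_\ell$ reduced. Given any lift $\zeta_\ell^0$ of $\underline{\varrho}|_{U_\ell}$ (which exists because $\underline{\varrho} \in \fZ_{g, \vk}$), the divisor $\Br_{\fZ}(F_{\zeta_\ell^0})$ is, by Theorem \ref{thm:aciter}, a relative effective Cartier divisor of $\bA^1_{\bs{A}_\ell}$ that is finite locally free of constant rank $r$; the characteristic polynomial of multiplication by $t$ on its structure sheaf then exhibits its ideal as generated by a monic $P_\ell(t) = t^r + b^\ell_{r-1}t^{r-1} + \cdots + b^\ell_0$ (a surjection of locally free sheaves of equal rank being an isomorphism), this monic generator being unique because $\bs{A}_\ell$ reduced gives $\bs{A}_\ell[t]^\times = \bs{A}_\ell^\times$. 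The translation computation of Proposition \ref{prop:recollement} shows that $\zeta_\ell := \zeta_\ell^0 - r^{-1}b^\ell_{r-1}\bs{1}$ is normalized, hence by the uniqueness above it is the unique normalized lift over $U_\ell$. On overlaps the two restrictions are normalized lifts of the same polar part, so they coincide, and the $\zeta_\ell$ glue to the desired global $\zeta_{\underline{\varrho}}$. Finally, for $f : T \to S$ with $T$ reduced, $f^*\zeta_{\underline{\varrho}}$ is a normalized lift of $f^*\underline{\varrho}$ by the second paragraph, so uniqueness over the reduced scheme $T$ gives $f^*\zeta_{\underline{\varrho}} = \zeta_{f^*\underline{\varrho}}$.

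The main obstacle is the passage from the local henselian hypothesis of Lemma \ref{lem:algpoly} to a general reduced affine: I must know that the branch divisor is cut out there by a genuinely \emph{unique} monic polynomial, which is precisely where reducedness (to pin down the generator) and the invertibility of $r$ (to solve for the normalizing translation) intervene. Once the base-change compatibility of $\Br_{\fZ}$ is secured, the gluing and the base-change identity are forced by uniqueness, so the real content lies in this upgrade of Lemma \ref{lem:algpoly} together with the functoriality of $\Br_{\fZ}$.
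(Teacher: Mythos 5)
Your proof is correct and follows the same route as the paper: local existence and uniqueness of the normalized lift from Proposition \ref{prop:recollement}, gluing forced by uniqueness, and the base-change identity read off from the compatibility of $\Br_{\fZ}$ with pullback. The only genuine addition is your Cayley--Hamilton argument producing a unique monic generator of the branch ideal over an arbitrary reduced affine (rather than the henselian local ring of Lemma \ref{lem:algpoly}), which is a clean way to make the Zariski-local gluing explicit, but it does not change the substance of the argument.
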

\begin{proof} 
One can locally uniquely lift $\underline{\varrho}$ into a normalized section $\zeta_{\underline{\varrho}}$. This is enough to show lifts glue globally and uniquely over $S$. Now $f^*\zeta_{\underline{\varrho}}$ is a lift of $f^*\underline{\varrho}$, we only need to check it is normalized. The branch divisor commutes to base change : If $P(t)$ is a local equation for $\Br(F_{\zeta_\varrho})$ having zero $r-1$ coefficient then so is the case of $P(t) \otimes_{\bs{A}} 1_{\bs{B}}$ for $f^*\zeta_{\underline{\varrho}}$.
\end{proof}
\begin{corollary}
  \label{cor:immersionfermerZ}
  The stack $\fZ_{g, \vk}$ is naturally embedded in $\fMm_{g, n}(\bP^1, \vk)$.
\end{corollary}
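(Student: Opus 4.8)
The plan is to realize $\fZ_{g, \vk}$ as a closed substack of $\fMm_{g, n}(\bP^1, \vk)$ by using the canonical normalized lift of corollary \ref{cor:previous} as a section of the $\bG_a$-torsor $p : \fMm_{g, n}(\bP^1, \vk) \to \fZ_{g, \vk}$ constructed above. First I would assemble the normalized lifts into a morphism of stacks $s : \fZ_{g, \vk} \to \fMm_{g, n}(\bP^1, \vk)$. To an object $(\pi : \mc{X} \to S, \underline{\varrho})$ of $\fZ_{g, \vk}$, corollary \ref{cor:previous} attaches the unique normalized lift $\zeta_{\underline{\varrho}} \in \Hi^0(\sO_{\mc{X}}(\sum_i k_i D_i))$, hence the stable map $F_{\zeta_{\underline{\varrho}}}$; the same corollary records that $f^{*}\zeta_{\underline{\varrho}} = \zeta_{f^{*}\underline{\varrho}}$, so the assignment is functorial and defines $s$. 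Since the torsor projection $p$ recovers the polar part $\underline{\varrho}$ from any lift, we have $p \circ s = \id_{\fZ_{g, \vk}}$, so that $s$ is a section of $p$.

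Next I would invoke the standard fact that a section of a torsor under a \emph{separated} group scheme is a closed immersion. The section $s$ trivializes $p$, giving an isomorphism $\fMm_{g, n}(\bP^1, \vk) \simeq \fZ_{g, \vk} \times \bG_a$ of stacks over $\fZ_{g, \vk}$, under which $s$ becomes the unit section $\fZ_{g, \vk} \to \fZ_{g, \vk} \times \bG_a$. As $\bG_a$ is affine, hence separated, its unit section $\Spec \bbk \to \bG_a$ is a closed immersion, and closed immersions are stable under base change; therefore $s$ is a closed immersion. Its image is precisely the locus of \emph{normalized} maps of definition \ref{defn:normalized}, so $\fZ_{g, \vk}$ is realized as the closed substack of normalized stable maps. (That $s$ is at least a monomorphism is automatic: any section of a representable morphism is a monomorphism.)

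The step requiring care — and the main obstacle — is the functoriality of $s$ over arbitrary, possibly non-reduced, bases. The normalization condition of definition \ref{defn:normalized} is phrased through a local equation of $\Br_{\fZ}(F)$, and over a reduced ring $\bs{A}$ such an equation is determined up to a unit of $\bs{A}$, which is what makes the vanishing of the $(r-1)$-coefficient well defined and yields corollary \ref{cor:previous}. Over a non-reduced base one must instead use that the branch ideal is generated by a \emph{monic} polynomial (lemma \ref{lem:algpoly}) and that a principal ideal admits at most one monic generator of given degree: comparing leading coefficients forces any unit relating two monic generators to be a constant unit, so the monic generator is canonical even when $\bs{A}[t]^{*} \neq \bs{A}^{*}$. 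The translation $a = -(r b_r)^{-1} b_{r-1}$ of proposition \ref{prop:recollement}, applied to this canonical monic generator, then extends $s$ to all test schemes and guarantees compatibility with base change, completing the construction of the closed embedding $\fZ_{g, \vk} \hookrightarrow \fMm_{g, n}(\bP^1, \vk)$.
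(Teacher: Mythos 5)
Your proof is correct, and it diverges from the paper's own argument on precisely the point you flag as delicate. The paper defines the section of the $\bG_a$-torsor $\fMm_{g,n}(\bP^1,\vk)\rightarrow \fZ_{g,\vk}$ only on closed points (equivalently, over reduced bases, which is all that corollary \ref{cor:previous} covers), and then upgrades this to a morphism of stacks by observing that $\fZ_{g,\vk}$ is reduced and $\fMm_{g,n}(\bP^1,\vk)$ is separated; the burden of the argument is thus shifted onto the reducedness of $\fZ_{g,\vk}$ (which the paper extracts from the proof of proposition \ref{prop:tame} under Tame Assumption \ref{tam}). You instead strengthen the normalization construction itself so that it works over an arbitrary, possibly non-reduced, local henselian base: lemma \ref{lem:algpoly} nowhere uses reducedness, and your observation that a principal ideal of $\bs{A}[t]$ has at most one monic generator of given degree (if $P$ and $Q=fP$ are both monic of degree $r$ with $P$ a non-zero-divisor, comparing coefficients of $t^{r+j}$ downward forces $f=1$) makes the vanishing of the $(r-1)$-coefficient a well-posed condition without dividing out by $\bs{A}[t]^{*}$. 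The translation $a=-(rb_r)^{-1}b_{r-1}$ then needs only $r\in\bs{A}^{*}$, which Tame Assumption \ref{tam} provides. This buys you a section defined on all test schemes at once, with base-change compatibility coming from uniqueness of the monic generator, so you never need to know that $\fZ_{g,\vk}$ is reduced --- arguably a cleaner foundation, since the paper's reducedness claim is itself only lightly justified. You also make explicit, via the trivialization of the torsor and the closedness of the unit section of $\bG_a$, why the embedding is closed; the paper's proof establishes the section but leaves the immersion property implicit. Both routes rest on the same inputs (theorem \ref{thm:aciter}, lemma \ref{lem:algpoly}, proposition \ref{prop:recollement}), so the difference is one of packaging rather than substance, but your packaging removes a genuine hypothesis from the intermediate steps.
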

\begin{proof}
  It is enough to build this embedding on closed points of $\fZ_{g, \vk}$. For this to make sense one has to notice $\fZ_{g, \vk}$ is reduced and $\fMm_{g, n}(\bP^1, \vk)$ is separated. The latter comes from the fact $\fMm_{g, n}(\bP^1, \vk)$ is a locally open substack in $\fMm_{g, n}(\bP^1, d, \infty)$ which is proper by \cite{BehManin}. For the former notice that an infinitesimal automorphism of a point of $\fZ_{g, \vk}$ is an infinitesimal automorphism of the underlying polar curve. The proof of proposition \ref{prop:tame} shows there is no such automorphisms assuming \ref{tam}. Each point $x = (\pi : X \rightarrow \mbb{L} , \underline{\varrho})$ in $\fZ_{g, \vk}$ defines a point $F_{\zeta_{\underline{\varrho}}}$ in $\fMm_{g,n}(\bP^1, \vk)$. Corollary \ref{cor:previous} shows this map does not depend on the equivalence class of $x$. It is clear this is a section of $\fMm_{g, n}(\bP^1, \vk) \rightarrow \fZ_{g, \vk}$ on points. This is as well the case on $\fZ_{g, \vk}$ for it is reduced and separated.  
\end{proof}
\begin{definition}
  The composition of the previous embedding with the branch morphism $\Br_\fZ$ is the extension of the $\mc{L}\!\mc{L}$ map to $\fZ_{g, \vk}$. It sends an object of $\fZ_{g, \vk}$ on an effective relative divisor of degree $2g-2+d+n$ in $\bP^1\setminus \infty$. It is denoted $\Br_{\fZ}$ as well. 
\end{definition}
The ELSV cone $\fZ_{g, \vk}$ is enclosed in a diagram 
\begin{displaymath}
  \begin{tikzpicture}[baseline=(current  bounding  box.center),>=latex] 
      \matrix (m) 
      [matrix of math nodes, row sep=2.5em, column sep=2.5em, text
      height=1.5ex, text depth=0.25ex]  
      { \fMm_{g, n} & \fZ_{g, \vk} & \Div^r\bA^1. \\};    
      \path[->,font=\scriptsize]  
      (m-1-2) edge node[auto] {$\Br_{\fZ}$} (m-1-3)
      (m-1-2) edge (m-1-1);
  \end{tikzpicture}
\end{displaymath} 
where the left hand-side morphism is the forgetful one. This is the expected behavior of a stack of covers. Recall the theorem on symmetric functions identifies $\Div^r\bA^1$ and $\bA^r$. An image of an object in $\fZ_{g, \vk}$ by $\Br_{\fZ}$ has zero first coordinate in $\bA^r$, this is the coordinate corresponding to the first elementary symmetric function. Assume for simplicity $x_1=0$. The set theoretic image of $\fZ_{g, \vk}$ by $\Br_{\fZ}$ is contained in the hyperplane $x_{1}=0$, i.e. an affine space of dimension $r-1$. For degree reasons this image is not zero. The $\bG_m$-action on $\fZ_{g, \vk}$ composing with automorphisms of $\Aut(\bP^1, 0, \infty)$ corresponds to the $\bG_m$-action on $\bA^r=\Div^r\bP^1$ having weights $(1, 2, \ldots, r)$. The induced action on the hyperplane $x_1=0$ has weights $(2, \ldots, r)$. Taking quotients we get the diagram 
\begin{equation}
  \label{corsp:Z}
  \begin{tikzpicture}[baseline=(current  bounding  box.center),>=latex] 
      \matrix (m) 
      [matrix of math nodes, row sep=2.5em, column sep=2.5em, text
      height=1.5ex, text depth=0.25ex]  
      { \fMm_{g, n} & \bP\fZ_{g, \vk} & \bP(2, \ldots, r) \\};    
      \path[->,font=\scriptsize]  
      (m-1-2) edge node[auto] {$\delta$} (m-1-3)
      (m-1-2) edge (m-1-1);
  \end{tikzpicture}
\end{equation} 
which is to be compared to \ref{corsp:H}. This is the needed configuration to pull out the ELSV formula. The $\delta$ morphism is finite of degree the Hurwitz number $\mu_{g, \vk}$ and an irreducible component of $\bP\fZ_{g, \vk}$ is of codimension at most $g$. The ELSV compactification $\cHh_{g, \vk}$ is exactly of codimension $g$ and thus of dimension $r-2$. Using the projection formula one gets
\begin{equation}
  \label{eq:intersection}
  \int_{\fMm_{g, n}} c_1\sO(1)^{r-2}\cap [\cHh_{g, \vk}] = \deg(\delta)\int_{\bP(2, \ldots, r)} c_1\sO(1)^{r-2} = \frac{\deg(\delta)}{r!}.
\end{equation}
Since $[\cHh_{g, \vk}]$ is a reduced component of $\bP\fZ_{g, \vk}$ it is of multiplicity $1$ in $[\bP\fZ_{g, \vk}]$. A simple computation shows that the union of components different of $\cHh_{g, \vk}$ is sent by $\delta$ on a closed subset of codimension $\geq 1$ in $\bP(2, \ldots, r)$. Summing these up we get that  
\begin{displaymath}
  \int_{\fMm_{g, n}} c_1\sO(1)^{r-2}\cap [\cHh_{g, \vk}] = \int_{\fMm_{g, n}}c_1\sO(1)^{r-2}\cap [\bP\fZ_{g, \vk}^g]
\end{displaymath} 
where $[\bP\fZ_{g, \vk}^g]$ is the part of $[\bP\fZ_{g, \vk}]$ supported on components of codimension $g$ in $\bP\fZ_{g, \vk}$. To show the contribution of components not in $\cHh_{g, \vk}$ is zero one can use the projection formula as in \ref{eq:intersection}. The contribution on the right-hand side is precisely the one defined by the section of $\mbb{E}_{g, \vk}$ defining $\fZ_{g, \vk}$. The precise computation is made in \cite[5]{ELSV}.

\section{Rigidifications and the Hurwitz projection}

\subsection{The stack of rigidified marked curves}

Let $(Y, q_\infty, \{q_j\}_{j=1}^{\bar{r}})$ be a prestable marked curve. Its underlying dual graph $\Gamma_Y$ is a tree. Following \cite[10.2]{OdaSesh} the collection of restrictions of a line bundle to each irreducible component of $Y$ defines an isomorphism   
\begin{displaymath}
  \Pic(Y) = \prod_{v \in V(\Gamma_Y)} \Pic(Y_v) \simeq \Z^{\Card{V(\Gamma_Y)}}.
\end{displaymath}
Since each irreducible component of $Y$ is a rational smooth curve a line bundle $\ms{M}$ on $Y$ is given its multi-degree, i.e. the collection of degrees on each irreducible component. The invertible sheaf of multi-degree $\{d_v\}_{v \in V(\Gamma_Y)}$ is written denoted $\sO_Y(\{d_v\})$. For instance the canonical sheaf on $Y$ has multi-degree $\{-2 + \val(v)\}_{v \in V(\Gamma_Y)}$.
\begin{lemma}
  \label{lem:h0h1}
  Given a collection $\{d_v\}_{v \in V(\Gamma_Y)}$ of \emph{natural} integers we have that 
  \begin{displaymath}
   h^1\big(\sO_Y(\{d_v\})\big) = 0 \quad \textrm{and} \quad h^0\big(\sO_Y(\{d_v\})\big) = 1 + \sum_{v \in V(\Gamma_v)} d_v.
   \end{displaymath}
\end{lemma}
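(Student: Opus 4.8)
The plan is to argue by induction on the number of vertices of the tree $\Gamma_Y$, peeling off one leaf at a time and propagating both assertions through the partial normalization exact sequence. The non-negativity of the $d_v$ (that they are \emph{natural} integers) is used in an essential way and is exactly what makes both statements hold.

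For the base case $\Card{V(\Gamma_Y)} = 1$ the curve is a single $\bP^1$ and $\sO_Y(\{d_v\}) = \sO_{\bP^1}(d)$ with $d = d_v \geq 0$; here $h^1 = 0$ and $h^0 = d+1$ are the classical values. For the inductive step, since $\Gamma_Y$ is a tree with at least two vertices it has a leaf $w$: the component $Y_w \simeq \bP^1$ meets the union $Y'$ of the remaining components in a single node $x$. Writing $\ms{M} = \sO_Y(\{d_v\})$ and tensoring the partial normalization sequence $0 \to \sO_Y \to \sO_{Y'} \oplus \sO_{Y_w} \to \sO_x \to 0$ by the invertible sheaf $\ms{M}$ gives the short exact sequence
\[
0 \to \ms{M} \to \ms{M}|_{Y'} \oplus \ms{M}|_{Y_w} \to \ms{M}|_x \to 0,
\]
in which $\ms{M}|_{Y'} = \sO_{Y'}(\{d_v\}_{v \neq w})$, $\ms{M}|_{Y_w} = \sO_{\bP^1}(d_w)$, and $\ms{M}|_x$ is the one-dimensional fiber of $\ms{M}$ at $x$.

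I would then pass to the associated long exact sequence in cohomology. The curve $Y'$ is a tree with one fewer vertex carrying the restricted multidegree $\{d_v\}_{v\neq w}$, all of whose entries are again natural integers, so the inductive hypothesis gives $\Hi^1(\ms{M}|_{Y'}) = 0$ and $h^0(\ms{M}|_{Y'}) = 1 + \sum_{v \neq w} d_v$; likewise $\Hi^1(\ms{M}|_{Y_w}) = \Hi^1(\bP^1, \sO(d_w)) = 0$ since $d_w \geq 0$. The main point, and the only nonformal step, is the surjectivity of the evaluation map $\Hi^0(\ms{M}|_{Y'}) \oplus \Hi^0(\ms{M}|_{Y_w}) \to \ms{M}|_x$: this is where non-negativity is decisive, because $\sO_{\bP^1}(d_w)$ is globally generated for $d_w \geq 0$, so already the second summand surjects onto the fiber at $x$. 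Granting this, the long exact sequence forces $\Hi^1(\ms{M}) = 0$ and yields
\[
h^0(\ms{M}) = h^0(\ms{M}|_{Y'}) + h^0(\ms{M}|_{Y_w}) - 1 = 1 + \sum_v d_v,
\]
completing the induction. As a consistency check, once $h^1 = 0$ is known the value of $h^0$ also follows from Riemann--Roch on the nodal curve $Y$, since a tree of smooth rational curves has arithmetic genus zero, i.e. $\chi(\sO_Y) = 1$, whence $\chi(\ms{M}) = \deg \ms{M} + 1 = \sum_v d_v + 1$.
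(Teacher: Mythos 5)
Your proof is correct, but it takes a genuinely different route from the paper's. The paper treats the two assertions separately: for the vanishing of $h^1$ it invokes Serre duality, $h^1(\sO_Y(\{d_v\})) = h^0(\omega_Y(\{-d_v\}))$, and then peels off leaves of the tree using the fact that $\omega_Y$ has multidegree $\{-2+\val(v)\}$, so its restriction to a leaf component (valence $1$) twisted by $-d_v \leq 0$ has negative degree, forcing any global section to vanish there and reducing to the subcurve with one fewer component; the formula for $h^0$ is then not proved but quoted from Oda--Seshadri. You instead establish both statements at once by a single induction through the partial normalization sequence at the node joining a leaf to the rest of the curve, with the only nonformal step --- surjectivity of evaluation onto the fiber at the node --- supplied by global generation of $\sO_{\bP^1}(d_w)$ for $d_w \geq 0$, which you correctly single out as the place where non-negativity enters. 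Your argument buys self-containedness: it needs neither duality on the nodal curve (hence no computation of the multidegree of $\omega_Y$) nor the external reference for the dimension count, and the concluding Riemann--Roch consistency check via $\chi(\sO_Y)=1$ is exactly the content of the quoted Oda--Seshadri statement. The paper's duality argument is shorter for the $h^1$ part alone but leaves the $h^0$ formula to a citation.
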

\begin{proof}
  By Serre duality $h^1(\sO_Y(\{d_v\})) = h^0(\omega_Y(\{-d_v\}))$. If $Y$ is smooth $\omega_Y(\{-d_v\})$ has negative degree and $\Hi^0(\omega_Y(\{-d_v\})) = 0$. For general $Y$ let $u$ be a leaf of $\Gamma_Y$. The component $Y_{u}$ can have only one nodal point and the restriction of $\omega_Y(\{-d_v\})$ to $Y_{u}$ is still of negative degree. If $Y^*_{u}= \overline{Y\setminus Y_{u}}$ we have an isomorphism from $\Hi^0(\omega_Y(\{-d_v\}))$ on $\Hi^0(\omega_Y(\{-d_v\})_{\mid Y^*_{u}})$. Since $Y^*_{u}$ is a prestable genus $0$ curve having $\Card{V(\Gamma_Y)}-1$ components we get the first statement by induction. The second is a special case of \cite[10.4]{OdaSesh}.    
\end{proof}
Let $Y_\infty$ be the component of $Y$ containing $q_\infty$. We have an isomorphism $\Hi^0(\sO_Y(q_\infty))\simeq \Hi^0(\sO_{Y_\infty}(q_\infty))$. Since the later is isomorphic to $\sO_{\bP^1}(1)$ it is base point free. By lemma \ref{lem:h0h1} $h^0(\sO_{Y}(q_\infty)) = 2$ and 
\begin{displaymath}
\bigoplus_{\ell\geq 0}\Hi^0(\sO_Y(\ell q_\infty)) = \Sym{\Hi^0(\sO_Y(q_\infty))}.
\end{displaymath}
We thus get a map $Y \rightarrow \Proj{\Hi^0(\sO_Y(q_\infty))}$ which, after choosing a base of $\Hi^0(\sO_Y(q_\infty))$, gives a morphism $Y \rightarrow \bP^1$. It is an isomorphism if restricted to $Y_\infty$ and contracts any other component. This still makes sense in the relative case. Let $(\eta : \mc{Y} \rightarrow S, E_\infty, \{E_{j}\}_{j})$ be a genus $0$  prestable curve. By lemma \ref{lem:h0h1} $h^0(\sO_{\mc{Y}_s}(E_{\infty}))=2$ and $h^1(\sO_{\mc{Y}_s}(E_\infty)) = 0$ for each point $s \in S$. Following \cite[lemme 1.5]{Knudsen2} the direct image $\ms{Q}_\infty = \eta_*\sO_{\mc{Y}}(E_\infty)$ is a locally free sheaf of rank $2$ that commutes to base change. It is relatively ample and thus gives an $S$-map $\mc{Y} \rightarrow \bP_S(\ms{Q}_\infty)$. 
  
Let $\bs{1}$ be the section $\sO_S \rightarrow \ms{Q}_\infty$ obtained by pushing forward the canonical section $\sO_{\mc{Y}} \rightarrow \sO_{\mc{Y}}(E_\infty)$. If there is a section $\xi \in \Hi^0(\ms{Q}_\infty)$ such that $\{\bs{1}, \xi\}$ locally generate $\ms{Q}_\infty$, or equivalently trivialize $\ms{Q}_\infty$, then $\{\bs{1}, \xi\}$ defines an isomorphism $\bP(\ms{Q}_\infty) \simeq \bP^1_S$ and by composition a morphism $\mc{Y} \rightarrow \bP^1_S$. Call a \emph{rigidification} of $(\eta : \mc{Y} \rightarrow S, E_\infty, \{E_j\}_j)$ such a choice of $\xi$. A rigidication of $(\eta : \mc{Y} \rightarrow S, E_\infty, \{E_j\}_j)$ is equivalently given by a splitting of 
\begin{equation}
\label{exactseq}
\begin{tikzpicture}[baseline=(current  bounding  box.center),>=latex]
    \matrix (m) 
    [matrix of math nodes, row sep=2.5em, column sep=2em, text height=1ex, text depth=0.25ex] 
    {  0 &  \sO_S & \ms{Q}_\infty & \sL_\infty^{-1} & 0 \\}; 
    \path[->,font=\scriptsize]  
    (m-1-1) edge (m-1-2)
    (m-1-2) edge (m-1-3)
    (m-1-3) edge node[auto] {$\kappa$}(m-1-4)
    (m-1-4) edge (m-1-5);
  \end{tikzpicture}  
\end{equation}
together with a global nonzero section of $\sL_\infty^{-1}$. Given two rigidifications $\xi$ and $\xi'$ there is a unique $(\lambda, a) \in \Hi^0(\sO_S^*) \times \Hi^0(\sO_S)$ such that $\xi' = \lambda \xi + a \bs{1}$. This is an action of $\bG_m \ltimes \bG_a$ for which the set of rigidifications of $\eta$ (if not empty) is a torsor. This action corresponds to the one (by composition) of $\Aut(\bP^1, \infty)$ on the map $\mc{Y} \rightarrow \bP_S^1$ defined by $\{\bs{1}, \xi\}$.

Let $\psi : \mc{Y} \rightarrow \bP_S(\ms{Q}_\infty)$ be the map defined by $\ms{Q}_\infty$. The section $\tau_{j}$ and $\tau_\infty$ defining $E_j$ and $E_\infty$ give rise to the $S$-sections of $\bP_S(\ms{Q}_\infty)$ given by composing with $\psi$. We get marked $S$-points $\bar{E}_j$ which do not intersect $\bar{E}_\infty$. Each marked point $\bar{E}_j$ is defined by a onto map 
\begin{displaymath}
\begin{tikzpicture}[baseline=(current  bounding  box.center),>=latex]
    \matrix (m) 
    [matrix of math nodes, row sep=2.5em, column sep=2em, text height=1ex, text depth=0.25ex] 
    {\kappa_j : \ms{Q}_\infty  & \sL_j^{-1}\\};  
    \path[->>,font=\scriptsize]  
    (m-1-1) edge (m-1-2);
  \end{tikzpicture} 
\end{displaymath}
 where $\sL_j^{-1}$ is the conormal bundle at $\bar{E}_j$. The marked point $\bar{E}_\infty$ is given by $\kappa$ in the exact sequence \ref{exactseq}. The fact $\bar{E}_j$ does not intersect $\bar{E}_\infty$ corresponds to 
\begin{displaymath}
\begin{tikzpicture}[baseline=(current  bounding  box.center),>=latex]
    \matrix (m) 
    [matrix of math nodes, row sep=2.5em, column sep=2em, text height=1ex, text depth=0.25ex] 
    {\sO_S & \ms{Q}_\infty & \sL_{j}^{-1}\\};  
    \path[->,font=\scriptsize]  
    (m-1-1) edge (m-1-2)
    (m-1-2) edge (m-1-3);
  \end{tikzpicture} 
\end{displaymath} 
being an isomorphism. Given a rigidification $\xi$ of $\eta$ we get elements $\kappa_{j} (\xi) \in \Hi^0(\sL_{j}^{-1})$. In terms of the base of $\ms{Q}_\infty$ defined by $\{\bs{1}, \xi\}$ and using $\sL_j^{-1} \simeq \sO_S$ the morphism $\kappa_j$ is defined by $(\lambda_j, \mu_j) \in \Hi^0(\sO_S^*) \times \Hi^0(\sO_S)$. In this case $\bar{E}_j$ is the $S$-point of $\bP^1_S$ defined by $\mu_j\lambda_j^{-1}$.   
\begin{definition}
  Let $\bs{\epsilon} = \{\epsilon_j\}_{j=1}^{\bar{r}}$ be a set of partitions of $d$. An $\bs{\epsilon}$-rigidification of $(\eta :\mc{Y} \rightarrow S, \{E_j\}, E_\infty)$ is a rigidification $\xi$ such that 
  \begin{equation}
    \label{eq:etarig}
    \sum_{j} \big(\sum_{\ell_j}(\epsilon_{j}(\ell_j)
    -1)\big)\kappa_j (\xi) \in \mathrm{Nil}(\sO_S).
  \end{equation}
\end{definition}
One can see the set of $\bs{\epsilon}$-rigidifications of $\eta$ (if not empty) is a $\bG_m$-torsor. For reduced $S$ there can be only one $\bs{\epsilon}$-rigidification up to multiplication by $\lambda \in  H^0(\sO_S^*)$.

Let $\fMm_{0, \bar{r}, \infty}$ be the stack $\fMm_{0, \bar{r}+1}$ of curves having a marked point indexed by $\infty$. Write $\fMm_{0, \bar{r}, \infty}^{\bs{\epsilon}}$ for the algebraic stack whose objects are curves in $\fMm_{0, \bar{r}, \infty}$ endowed with an $\bs{\epsilon}$-rigidification. Let $\fMm_{0, \bullet, \infty}$ be the finite disjoint union of $\fMm_{0, \bar{r}, \infty}^{\bs{\epsilon}}$ for all partitions $\bs{\epsilon}$ of $d$.
\begin{lemma}
  The map $\fMm_{0, \bar{r},\infty}^{\bs{\epsilon}} \rightarrow \fMm_{0, \bar{r}, \infty}$ is a $\bG_m$-torsor and $\fMm_{0, \bar{r}, \infty} = [\fMm_{0, \bar{r},\infty}^{\bs{\epsilon}}/\bG_m]$.
\end{lemma}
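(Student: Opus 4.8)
The statement is local on $\fMm_{0,\bar r,\infty}$ in the sense that it suffices to analyse the fibre of the forgetful morphism $q\colon \fMm_{0,\bar r,\infty}^{\bs{\epsilon}}\to\fMm_{0,\bar r,\infty}$ over a fixed genus $0$ prestable curve $(\eta\colon\mc{Y}\to S,E_\infty,\{E_j\}_j)$. First I would record the $\bG_m$-action: for $\lambda\in\bG_m$ and a rigidification $\xi$ one sets $\lambda\cdot\xi=\lambda\xi$, which is again a rigidification and, since $\kappa_j$ is $\sO_S$-linear, multiplies the left-hand side of \eqref{eq:etarig} by $\lambda$; hence it preserves the class of $\bs{\epsilon}$-rigidifications and commutes with $q$. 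This action is free because $\xi$ maps to a nowhere-vanishing section of $\sL_\infty^{-1}$, so $\lambda\xi=\xi$ forces $\lambda=1$. Thus $q$ carries a free $\bG_m$-action over $\fMm_{0,\bar r,\infty}$, and the whole lemma follows once $q$ is shown to be a (Zariski-locally trivial) $\bG_m$-torsor, the quotient description $\fMm_{0,\bar r,\infty}=[\fMm_{0,\bar r,\infty}^{\bs{\epsilon}}/\bG_m]$ being a formal consequence of descent for any $\bG_m$-torsor.

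The heart of the matter is the fibre of $q$. The rigidifications of $\eta$ form, when non-empty, a torsor under $\Aut(\bP^1,\infty)=\bG_m\ltimes\bG_a$ acting by $\xi\mapsto\lambda\xi+a\bs{1}$; this torsor is representable and Zariski-locally trivial over $S$ because $\ms{Q}_\infty=\eta_*\sO_{\mc{Y}}(E_\infty)$ is locally free of rank $2$ commuting with base change and the extension \eqref{exactseq} splits and trivializes locally. Write $w_j=\sum_{\ell_j}(\epsilon_j(\ell_j)-1)$ and consider the weighted branch functional $B(\xi)=\sum_j w_j\kappa_j(\xi)$ appearing in \eqref{eq:etarig}, read off in the trivialization of each $\sL_j^{-1}$ induced by the isomorphism $\sO_S\xrightarrow{\bs{1}}\ms{Q}_\infty\xrightarrow{\kappa_j}\sL_j^{-1}$ (an isomorphism precisely because $\bar E_j$ avoids $\bar E_\infty$), so that $\kappa_j(\bs{1})=1$. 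Linearity of $\kappa_j$ then gives the transformation law $B(\lambda\xi+a\bs{1})=\lambda B(\xi)+a\sum_j w_j$. From the expression of the branch divisor $\Br(\phi)$ of an admissible cover one computes $\sum_j w_j=2g-2+d+n=r$, which by Tame Assumption \ref{tam} is invertible on $S$. Consequently the $\bG_a$-part acts simply transitively on the fibres of $B$, so the vanishing locus $B=0$ of $\bs{\epsilon}$-rigidifications is exactly a reduction of the structure group of the rigidification torsor from $\bG_m\ltimes\bG_a$ to $\bG_m$; equivalently it is a $\bG_m$-torsor over $S$. This is the step I expect to be the crux, and it is precisely where the hypothesis $\bs{p}\nmid r$ is indispensable: if $r$ vanished in $\bbk$ the translations $\bG_a$ would fix $B$ and the equation $B=0$ would no longer cut $\bG_a$ down to a point.

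Finally I would globalize. The formation of $\ms{Q}_\infty$, of the maps $\kappa_j$ and of the branch divisor all commute with base change (for $\ms{Q}_\infty$ via Knudsen and for the branch divisor by theorem \ref{thm:aciter}), so the fibrewise description is functorial in $S$: the morphism $q$ is representable, affine and, by the previous paragraph, Zariski-locally isomorphic to $\bG_m\times S\to S$ with its standard action. Hence $q$ is a $\bG_m$-torsor. The nilpotent phrasing of \eqref{eq:etarig} is harmless here: since $\fMm_{0,\bar r,\infty}$ is smooth and a $\bG_m$-torsor over a smooth base is smooth, the substack it cuts out is reduced and coincides with the vanishing locus $B=0$ analysed above. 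The identification $\fMm_{0,\bar r,\infty}=[\fMm_{0,\bar r,\infty}^{\bs{\epsilon}}/\bG_m]$ then follows, the base of a $\bG_m$-torsor being canonically the quotient stack of its total space by $\bG_m$.
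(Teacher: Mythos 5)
Your proof is correct, and its core mechanism is the one the paper relies on: the $\bs{\epsilon}$-condition is affine-linear in the rigidification, transforming as $B \mapsto \lambda B + a\sum_j w_j$ under $\xi \mapsto \lambda\xi + a\bs{1}$, and since $\sum_j w_j = r$ is invertible by the tame assumption the locus $B=0$ meets each $\bG_a$-orbit exactly once, so the $\bs{\epsilon}$-condition reduces the structure group of the torsor of rigidifications from $\bG_m\ltimes\bG_a$ to $\bG_m$. The paper leaves exactly this verification to the reader (the remark ``one can see the set of $\bs{\epsilon}$-rigidifications of $\eta$ is a $\bG_m$-torsor'' just before the lemma) and instead phrases its proof as an identification: once the splitting of \ref{exactseq} is pinned down by the normalization condition, an $\bs{\epsilon}$-rigidification is nothing but a nowhere-vanishing section of $\sL_\infty^{-1}$, so $\fMm_{0,\bar r,\infty}^{\bs{\epsilon}}$ is the $\bG_m$-torsor $\Spec_S\big(\bigoplus_{\ell\in\Z}\sL_\infty^{\ell}\big)$ attached to the line bundle $\sL_\infty$ on $\fMm_{0,\bar r,\infty}$. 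The two packagings buy slightly different things: yours makes explicit where $\bs{p}\nmid r$ enters (and where the argument would break without it), while the paper's pins down which line bundle realizes the torsor, which is the form used later. Your closing remark on the $\mathrm{Nil}$ phrasing of \ref{eq:etarig} is the right instinct; the paper only ever invokes the condition over reduced bases, where it is literally $B=0$.
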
 
\begin{proof}
  To give a $\bG_m$-torsor over $\fMm_{0, \bar{r}, \infty}$ is to give a line bundle over $\fMm_{0,\bar{r}, \infty}$. Now an $\bs{\epsilon}$-rigidification of $(\eta : \mc{Y} \rightarrow S, \{E_j\}, E_\infty)$ is a global non zero section of $\sL_{\infty}^{-1}$. Equivalently it is a section of the $\bG_m$-torsor 
\begin{displaymath}
\Spec_S\big(\bigoplus_{\ell \in \Z} \sL_{\infty}^{\ell}\big).
\end{displaymath}
Thus $\fMm_{0, \bar{r}, \infty}^{\bs{\epsilon}}$ is the $\bG_m$-torsor defined by $\sL_\infty$ over $\fMm_{0, \bar{r}, \infty}$.   
\end{proof} 
The stack $\fMm_{0, \bar{r}, \infty}^{\bs{\epsilon}}$ comes with a natural map on the divisors of degree $r$ of $\bP^1$. It is related to a construction of M.~Kapranov in \cite[C]{Kapranover}. 
\begin{definition}
  \label{defn:morphkap}
  Denote by $\mf{k}_{\bs{\epsilon}}^\diamond$ (resp. $\mf{k}_{\bullet}^\diamond$) the morphism sending $((\eta:\mc{Y} \rightarrow S, \{E_j\},E_\infty), \xi)$ in $\fMm_{0, \bar{r}, \infty}^{\bs{\epsilon}}$ (resp. $\fMm_{0, \bullet, \infty}$) on the Cartier Divisor
  \begin{equation}
    \label{eq:kapdiv}
    \mf{c}[\xi]_*\big(\sum_{j, \ell_j}(\epsilon_j(\ell_j)-1)E_{j}\big)
  \end{equation}
  where $\mf{c}[\xi]$ is the contraction $\mc{Y} \rightarrow \bP^1_S$ defined by $\xi$.  
\end{definition}
Recall $\Div^\ell\bP^1= \bA^\ell$. The $\bG_m$-action on the rigidification $\xi$ corresponds to the action of $\Aut(\bP^1, 0, \infty)$ on $\mf{c}[\xi]$ by composition. It acts with weights $(1, 2, \ldots, \ell)$ on $\bA^\ell$. Now \ref{eq:kapdiv} is a degree $\ell$ divisor whose first coordinate (at least for reduced $S$) in $\bA^\ell$ is zero. Since all the stacks involved are reduced and separated we get that $\mf{k}_{\bs{\epsilon}}^\diamond$ factors through the hyperplane of zero first coordinate. Since $\ell$ cannot be zero taking the quotient by the $\bG_m$ actions on the source and target we get maps $\mf{k}_{\bs{\epsilon}}$ and $\mf{k}_\bullet$ respectively from $\fMm_{0, \bar{r}, \infty}$ and $\fMm_{0, \bullet, \infty}$ on $\bP(2, \ldots, \ell)$.

\subsection{The Hurwitz projection}
Let $\bs{\epsilon} = \{\epsilon_j\}_{j=1}^{\bar{r}}$ be a set of partitions of $d$. Denote $\fHh_{g, \vk, \diamond}^{\bs{\epsilon}}$ the fiber product 
\begin{displaymath}
\fHh_{g, \vk, \diamond}^{\bs{\epsilon}} = \fHh_{g, \vk}^{\bs{\epsilon}}\times_{\fMm_{0, \bar{r}, \infty}} \fMm_{0, \bar{r}, \infty}^{\bs{\epsilon}}.
\end{displaymath}
It is a line bundle over $\fHh_{g, \vk}^{\bs{\epsilon}}$ and has a natural action of $\bG_m$ coming from the one on $\fMm_{0, \bar{r}, \infty}^{\bs{\epsilon}}$. The quotient of $\fHh_{g, \vk, \diamond}^{\bs{\epsilon}}$ by this action is $\fHh_{g, \vk}^{\bs{\epsilon}}$. An $S$-object in $\fHh_{g, \vk, \diamond}^{\bs{\epsilon}}$ is an admissible cover 
\begin{displaymath}
\begin{tikzpicture}[baseline=(current  bounding  box.center),>=latex]
    \matrix (m) 
    [matrix of math nodes, row sep=2.5em, column sep=2em, text height=1ex, text depth=0.25ex] 
    {\phi : (\pi : \mc{X} \rightarrow S, \underline{D}, \{\underline{\bs{D}}_j\}_{j=1}^{\bar{r}}) & (\eta : \mc{Y} \rightarrow S, E_\infty, \{E_j\}_{j=1}^{\bar{r}})\\};  
    \path[->,font=\scriptsize]  
    (m-1-1) edge (m-1-2);
  \end{tikzpicture} 
\end{displaymath}
together with an $\bs{\epsilon}$-rigidification $\xi$ of $(\eta : \mc{Y} \rightarrow S,E_\infty, \{E_j\}_{j=1}^{\bar{r}})$. Composing $\phi$ with the morphism to $(\bP^1_S, \infty)$ defined by $\xi$ one gets a map 
\begin{displaymath}
\xi[\phi] : (\pi : \mc{X} \rightarrow S, \underline{D}) \rightarrow (\bP^1_S, \infty)
\end{displaymath}
which might not be a stable map. Denote $\overline{\xi[\phi]} : \overline{\mc{X}} \rightarrow \bP^1_S$ the stabilization of $\xi[\phi]$. 
\begin{lemma}
  The map $\overline{\xi[\phi]}$ is an object of $\fMm_{g, n}(\bP^1, \vk)$ lying in $\fZ_{g, \vk}$. 
\end{lemma}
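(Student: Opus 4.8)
The plan is to verify the assertion in two independent steps: that $\overline{\xi[\phi]}$ is a stable map in $\fMm_{g, n}(\bP^1, \vk)$, and that the resulting point is the normalized one, so that it lies in the image of the embedding of Corollary \ref{cor:immersionfermerZ}. First I would analyse the geometry of $\xi[\phi] = \mf{c}[\xi]\circ\phi$. The contraction $\mf{c}[\xi] : \mc{Y} \rightarrow \bP^1_S$ restricts to an isomorphism on the component $\mc{Y}_\infty$ carrying $E_\infty$ and collapses every other component of the genus $0$ tree to a point of $\bA^1_S = \bP^1_S\setminus\infty$ (no contracted component meets the \emph{smooth} marked point $E_\infty$). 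Hence $\xi[\phi]$ contracts exactly the components of $\mc{X}$ lying over contracted components of $\mc{Y}$, while it is an isomorphism-composed map in an étale neighbourhood of the markings $\underline{D}$, which sit over $E_\infty\in\mc{Y}_\infty$. The stabilization $\overline{\xi[\phi]}$ is then a well-defined stable map in $\fMm_{g,n}(\bP^1, d)$ by \cite{BehManin}; it has genus $g$, its $n$ markings survive because they lie on non-contracted components, and each $D_i$ is sent to $\infty$, so $\overline{\xi[\phi]}$ is an object of $\fMm_{g, n}(\bP^1, d, \infty)$.

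Next I would establish the profile condition \ref{conditionprofile}. Since $\mf{c}[\xi]$ is an isomorphism near $E_\infty$ and neither $\phi$ nor the stabilization modifies $\mc{X}$ there, while no contracted component of $\mc{X}$ maps to $\infty$, the scheme-theoretic equality $\phi^{-1}E_\infty = \sum_{i=1}^n k_iD_i$ from the definition of admissible covers transports to $\overline{\xi[\phi]}^{-1}\infty = \sum_{i=1}^n k_iD_i$. This already shows $\overline{\xi[\phi]} \in \fMm_{g, n}(\bP^1, \vk)$, and via the $\bG_a$-torsor $\fMm_{g, n}(\bP^1, \vk) \rightarrow \fZ_{g, \vk}$ it defines a point of $\fZ_{g, \vk}$.

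It then remains to see that this point is the one cut out by the section of Corollary \ref{cor:immersionfermerZ}, i.e.\ that $\overline{\xi[\phi]}$ is normalized in the sense of Definition \ref{defn:normalized}. The key computation is the branch identity, away from $\infty$, $\Br_{\fZ}(\overline{\xi[\phi]}) = \mf{c}[\xi]_*\big(\sum_{j}(\sum_{\ell_j}(\epsilon_j(\ell_j)-1))E_j\big) = \sum_j m_j[\kappa_j(\xi)]$ with $m_j = \sum_{\ell_j}(\epsilon_j(\ell_j)-1)$, which is exactly the Kapranov divisor of Definition \ref{defn:morphkap}. A defining polynomial of this divisor on $\bA^1$ is $\prod_j(t-\kappa_j(\xi))^{m_j}$, of degree $r$ by Theorem \ref{thm:aciter}, whose coefficient of $t^{r-1}$ is $-\sum_j m_j\kappa_j(\xi)$. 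The $\bs{\epsilon}$-rigidification condition \ref{eq:etarig} says precisely that this quantity lies in $\mathrm{Nil}(\sO_S)$, so the $(r-1)$-coefficient vanishes and $\overline{\xi[\phi]}$ is normalized; thus it lies in $\fZ_{g, \vk}$.

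The main obstacle I anticipate is the branch identity itself, namely the identification $\Br_{\fZ}(\overline{\xi[\phi]}) = \mf{c}[\xi]_*\big(\Br(\phi)-(d-n)E_\infty\big)$. This requires checking that composing the admissible cover with the contraction and then stabilizing neither creates nor destroys branching away from $\infty$, which amounts to proving that the $\mr{d}^\omega$ branch complex of \cite{BranchFP} is compatible with contraction of target components and with stabilization of the domain. Establishing this compatibility is where the careful local analysis near the contracted locus, together with the fact that $\mr{d}^\omega$ is an isomorphism at nodes, will be needed.
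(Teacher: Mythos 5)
Your proposal is correct and follows essentially the same route as the paper: the profile condition \ref{conditionprofile} is verified on closed fibers using that the markings lie on components not contracted by $\xi[\phi]$, and membership in $\fZ_{g, \vk}$ is reduced to the normalization condition via the identity $\Br_{\fZ}(\overline{\xi[\phi]}) = \sum_j\big(\sum_{\ell_j}(\epsilon_j(\ell_j)-1)\big)\bar{E}_j$ together with the $\bs{\epsilon}$-rigidification condition \ref{eq:etarig}. The compatibility of branch divisors with contraction and stabilization that you flag as the main anticipated obstacle is precisely the content of the paper's appendix result (theorem \ref{prop:Sec}), which the paper's proof simply invokes at this point.
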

\begin{proof}
  Following theorem \ref{prop:Sec}
  \begin{displaymath}
    \Br_{\fZ}(\overline{\xi[\phi]}) = \sum_{j} \big(\sum_{\ell_j}(\epsilon_j(\ell_j)-1)\big)\bar{E}_{j} 
  \end{displaymath}   
  where $\bar{E}_j$ is the $S$-point of $\bP^1_S$ obtained by pushing forward $E_j$. Condition \ref{eq:etarig} says one can locally find equations of $\Br_{\fZ}(\overline{\xi[\phi]})$ having zero $r-1$ coefficient. We thus only need to show $\overline{\xi[\phi]}$ is in $\fMm_{g, n}(\bP^1, \vk)$. It is enough to check this on the closed fibers. Let $s \in S$. We have to show $\overline{\xi[\phi]}_s$ has ramification index $k_i$ at $D_{i, s}$. But such a point lies on a component of $\mc{X}_s$ not contracted by $\xi[\phi]_s$, i.e. dominating $(\bP^1, \infty)$. Thus ocal behavior of $\xi[\phi]_s$ and $\phi_s$ at $D_{i, s}$ are the same and $\overline{\xi[\phi]}_s$ has ramification index $k_i$ at $D_{i, s}$. 
\end{proof}
The previous procedure defines a morphism $\mf{h}_{\bs{\epsilon}}^\diamond : \fHh_{g, \vk, \diamond}^{\bs{\epsilon}} \rightarrow \fZ_{g, \vk}$ that is equal to the forgetful map on the locus of underlying smooth curves. Write $\mf{h}^\diamond$ for the simply branched case away from $\infty$ and $\mf{h}_\bullet^\diamond$ for the disjoint union of $\mf{h}_{\bs{\epsilon}}^\diamond$ over all partitions of $d$. The morphisms $\mf{h}_{\bs{\epsilon}}^{\diamond}$ are $\bG_m$-equivariant for the action on $\bs{\epsilon}$-rigidificatoins and the one coming with $\fZ_{g, \vk}$. Because of stability of domain curves of admissible covers one can see $\mf{h}_{\bs{\epsilon}}^\diamond$ does not intersect the zero section of $\fP_{g, \vk}$. Let $\mf{h}_{\bs{\epsilon}}$ be the map $\fHh_{g, \vk} \rightarrow \bP\fZ_{g, \vk}$ obtained out of $\mf{h}_{\bs{\epsilon}}^{\diamond}$ by taking the quotients by the $\bG_m$-actions on domain and target.
\begin{definition}
  The Hurwitz projection $\mf{h}_\bullet$ is the disjoint union of $\mf{h}_{\bs{\epsilon}}$ over partitions of $d$. 
\end{definition}
\begin{theorem}
  \label{thm:finale}
  The Hurwitz projection $\mf{h}_\bullet$ factors through $\cHh_{g, \vk, \bullet}$ giving rise to a proper surjective morphism and a commutative diagram
  \begin{displaymath}
    \begin{tikzpicture}[baseline=(current  bounding  box.center),>=latex] 
      \matrix (m) 
      [matrix of math nodes, row sep=2.5em, column sep=2.5em, text
      height=1.5ex, text depth=0.25ex]  
      { \fMm_{g, n} & \fHh_{g, \vk}^\bullet & \fMm_{0, \bullet, \infty} \\
        \fMm_{g, n} & \cHh_{g, \vk}^\bullet & \bP(2, \ldots, r) \\};    
      \path[->,font=\scriptsize]  
      (m-2-2) edge (m-2-3)
      (m-2-2) edge (m-2-1)
      (m-1-2) edge node[auto] {$\mf{h}_\bullet$} (m-2-2)
      (m-1-3) edge node[auto] {$\mf{k}_\bullet$} (m-2-3)
      (m-1-2) edge (m-1-3)
      (m-1-2) edge (m-1-1);
      \draw[double, double distance=1.5 pt, font=\scriptsize]
      (m-1-1) -- (m-2-1);
  \end{tikzpicture}
  \end{displaymath}
\end{theorem}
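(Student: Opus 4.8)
The plan is to take $\mathfrak{h}_\bullet$ as the already-constructed morphism $\fHh_{g,\vk}^\bullet \to \bP\fZ_{g,\vk}$ and to prove in turn that (i) it factors through the closed substack $\cHh_{g,\vk}^\bullet$, (ii) the resulting morphism is proper and surjective, and (iii) the two squares commute. Throughout I would reduce to a single profile $\bs{\epsilon}$, writing $\mathfrak{h}_\bullet = \bigsqcup_{\bs{\epsilon}}\mathfrak{h}_{\bs{\epsilon}}$, and work $\bG_m$-equivariantly with the rigidified model $\mathfrak{h}_{\bs{\epsilon}}^\diamond : \fHh_{g,\vk,\diamond}^{\bs{\epsilon}} \to \fZ_{g,\vk}$, descending to quotients only at the end; this is legitimate since $\mathfrak{h}_{\bs{\epsilon}}$ is by definition the $\bG_m$-descent of $\mathfrak{h}_{\bs{\epsilon}}^\diamond$.

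First I would establish the factorization together with properness and surjectivity. By construction $\mathfrak{h}_{\bs{\epsilon}}^\diamond$ agrees with the forgetful map on the locus of smooth underlying curves, so after quotienting by $\bG_m$ the restriction of $\mathfrak{h}_\bullet$ to the dense open substack $\fH_{g,\vk}^\bullet$ coincides with the natural finite surjection $\fH_{g,\vk}^\bullet \to \cH_{g,\vk}^\bullet$, where $\cH_{g,\vk}^\bullet = [\fZ_{g,\vk}^\circ/\bG_m]$ is viewed inside $\bP\fZ_{g,\vk}$; hence $\mathfrak{h}_\bullet(\fH_{g,\vk}^\bullet) = \cH_{g,\vk}^\bullet$. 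Since $\fH_{g,\vk}^\bullet$ is dense in $\fHh_{g,\vk}^\bullet$ and $\mathfrak{h}_\bullet$ is continuous on underlying topological spaces, the set-theoretic image of $\mathfrak{h}_\bullet$ lies in $\overline{|\cH_{g,\vk}^\bullet|} = |\cHh_{g,\vk}^\bullet|$. As $\cHh_{g,\vk}^\bullet$ is reduced (being a closure in the sense of \cite[050A]{stacks-project}) and $\fHh_{g,\vk}^\bullet$ is reduced, a morphism whose set-theoretic image lands in a reduced closed substack factors through it, yielding $\mathfrak{h}_\bullet : \fHh_{g,\vk}^\bullet \to \cHh_{g,\vk}^\bullet$. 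Properness and surjectivity are then formal: $\fHh_{g,\vk}^\bullet$ is proper and $\cHh_{g,\vk}^\bullet$ is separated, being closed in the proper stack $\bP\fZ_{g,\vk}$ (corollary \ref{cor:proprtePPgk}), so a morphism from a proper to a separated stack is proper; its image is therefore closed, contains the dense substack $\cH_{g,\vk}^\bullet$, and hence equals $\cHh_{g,\vk}^\bullet$.

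It remains to check the two squares. For the right square the key input is the computation in the proof of the lemma preceding the statement: for an admissible cover $\phi$ of profile $\bs{\epsilon}$ with an $\bs{\epsilon}$-rigidification $\xi$ one has $\Br_{\fZ}(\overline{\xi[\phi]}) = \sum_j\big(\sum_{\ell_j}(\epsilon_j(\ell_j)-1)\big)\bar{E}_j$, which is exactly the centered divisor $\mathfrak{c}[\xi]_*\big(\sum_{j,\ell_j}(\epsilon_j(\ell_j)-1)E_j\big)$ defining $\mathfrak{k}_{\bs{\epsilon}}^\diamond$. Thus $\Br_{\fZ}\circ\mathfrak{h}_{\bs{\epsilon}}^\diamond$ and $\mathfrak{k}_{\bs{\epsilon}}^\diamond$ agree at the rigidified level; both produce effective divisors of degree $r$ in $\bA^1$ with vanishing first symmetric coordinate, and both are $\bG_m$-equivariant for the weight $(1,\dots,r)$ action on $\Div^r\bA^1$, so passing to the projectivizations identifies $\delta\circ\mathfrak{h}_\bullet$ with $\mathfrak{k}_\bullet$ applied to the target curve. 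For the left square, both composites send $\phi$ to the stabilization of its domain marked by $\underline{D}$: the bottom map is $\varkappa$, which forgets to the bubbly curve and contracts bubbles, while $\mathfrak{h}_\bullet$ produces precisely the bubbly curve underlying the stabilized map $\overline{\xi[\phi]}$, so commutativity follows from the compatibility of the stabilization of $\xi[\phi]$ with that of $(\mc{X},\underline{D})$, a standard property of the contraction of \cite{BehManin}.

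The step I expect to be the main obstacle is the factorization, and specifically its promotion from a statement on points to a morphism of stacks: the preceding lemmas control the behaviour over the smooth dense locus and identify the branch divisor, but to conclude that the boundary of $\fHh_{g,\vk}^\bullet$ maps \emph{scheme-theoretically} into the boundary of $\cHh_{g,\vk}^\bullet$ one must genuinely use properness of the source together with reducedness of both stacks. The compatibility of the two stabilizations in the left square is the only other point requiring real care; everything else is a formal consequence of the machinery assembled in the previous sections.
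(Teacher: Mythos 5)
Your proposal is correct and follows essentially the same route as the paper: the factorization is obtained by comparing set-theoretic images over the dense smooth locus and invoking reducedness and separatedness of $\cHh_{g,\vk}^\bullet$ to promote the containment of images to a morphism of stacks, properness and surjectivity are formal consequences of properness of $\fHh_{g,\vk}^\bullet$, and the right square reduces to the branch-divisor comparison of theorem \ref{prop:Sec}. The only addition is your explicit treatment of the left square, which the paper leaves implicit.
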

\begin{proof}
  The fact the right-hand square is commutative is a direct consequence of theorem \ref{prop:Sec} and does not depend on the fact $\mf{h}_\bullet$ factors through $\cHh_{g,\vk}^\bullet$. Let's focus on the first statement. Let $|\mf{h}_\bullet| : |\fHh_{g,\vk}| \rightarrow |\bP\fZ_{g,\vk}|$ be the continuous map induced by $\mf{h}_\bullet$ at the level of points. It is clear $\mf{h}_\bullet$ is proper. This says $|\mf{h}_\bullet|$ is closed and thus each point in the closure of $|\mf{h}_\bullet||\fH_{g, \vk}^\bullet|$ is in the image by $\mf{h}_\bullet$ of a point in the closure of $|\fH_{g, \vk}^\bullet|$, i.e.  
\begin{displaymath}
  |\mf{h}_\bullet|\left|\fHh_{g, \vk}^\bullet\right| \supset
  \overline{|\mf{h}_\bullet|\left|\fH_{g, \vk}^\bullet\right|}. 
\end{displaymath}
Since $|\mf{h}_\bullet||\fH_{g, \vk}^\bullet|= |\cH_{g, \vk}^\bullet|$ one can as well write 
\begin{displaymath}
  |\mf{h}_\bullet|\left|\fHh_{g, \vk}^\bullet\right| \supset \overline{\left|\cH_{g,
  \vk}^\bullet\right|} = \left|\cHh_{g,\vk}^\bullet\right|.
\end{displaymath}
The inverse inclusion comes from the fact $\fH_{g, \vk}^\bullet$ is dense in $\fHh_{g,\vk}^\bullet$. The image of a point in the closure of $|\fH_{g, \vk}^\bullet|$ is a point in the closure of $|\mf{h}||\fH_{g,\vk}^\bullet| = |\cH_{g, \vk}^\bullet|$, i.e. a point in $|\cHh_{g, \vk}^\bullet|$. Now $\cHh_{g, \vk}^\bullet$ is a reduced separated stack and so is the case of $\fHh_{g, \vk}^\bullet$, the continuous map $|\mf{h}_\bullet| : |\fHh_{g,\vk}^\bullet| \rightarrow |\cHh_{g, \vk}^\bullet|$ does therefore extend into a morphism of stacks $\mf{h}_\bullet : \fHh_{g, \vk}^\bullet \rightarrow \cHh_{g, \vk}^\bullet$. The fact $\mf{h}_\bullet$ is onto comes from the equality $|\mf{h}_\bullet||\fHh_{g, \vk}^\bullet| = |\cHh_{g,\vk}^\bullet|$. 
\end{proof}
\begin{corollary}
  The restriction $\mf{h}$ of $\mf{h}_\bullet$ to $\fHh_{g, \vk}$ is proper and onto. In particular, $\cHh_{g, \vk} = \cHh_{g, \vk}^\bullet$. 
\end{corollary}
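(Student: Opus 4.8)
The plan is to deduce the corollary from Theorem \ref{thm:finale} by restricting the Hurwitz projection to the simply branched component and then checking that the higher ramification components add nothing to the image. First I would record that $\fHh_{g, \vk}$, being the admissible cover stack $\fHh_{g,\vk}^{\bs{\epsilon}}$ attached to the simple profile $\bs{\epsilon}_0=\{(2,1^{d-2})\}$, is proper, while $\bP\fZ_{g, \vk}$ is separated as a closed substack of the proper stack $\bP\fP_{g, \vk}$ of Corollary \ref{cor:proprtePPgk}. Thus $\mf{h}=\mf{h}_{\bs{\epsilon}_0}$ is a morphism from a proper stack to a separated one, hence proper, and $|\mf{h}|$ is closed. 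Surjectivity onto $\cHh_{g, \vk}$ then follows exactly as in the proof of Theorem \ref{thm:finale}: the smooth locus $\fH_{g, \vk}$ is dense in $\fHh_{g, \vk}$ and $\mf{h}$ restricts on it to the forgetful morphism onto $\cH_{g, \vk}$, so taking closures gives
\begin{displaymath}
  |\mf{h}||\fHh_{g, \vk}| = \overline{|\mf{h}||\fH_{g, \vk}|} = \overline{|\cH_{g, \vk}|} = |\cHh_{g, \vk}|.
\end{displaymath}

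For the equality $\cHh_{g, \vk} = \cHh_{g, \vk}^\bullet$ the inclusion $\cHh_{g, \vk} \subseteq \cHh_{g, \vk}^\bullet$ is clear. For the reverse I would use that $\mf{h}_\bullet$ is onto $\cHh_{g, \vk}^\bullet$ and that $\fHh_{g, \vk}^\bullet$ is the finite disjoint union of the $\fHh_{g, \vk}^{\bs{\epsilon}}$, so that $|\cHh_{g, \vk}^\bullet| = \bigcup_{\bs{\epsilon}} |\mf{h}_{\bs{\epsilon}}(\fHh_{g, \vk}^{\bs{\epsilon}})|$. It therefore suffices to prove, for every profile $\bs{\epsilon}$, that $\mf{h}_{\bs{\epsilon}}(\fHh_{g, \vk}^{\bs{\epsilon}}) \subseteq \cHh_{g, \vk}$. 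Since $\mf{h}_{\bs{\epsilon}}$ is proper, hence closed, and $\fH_{g, \vk}^{\bs{\epsilon}}$ is dense in $\fHh_{g, \vk}^{\bs{\epsilon}}$, one has $\mf{h}_{\bs{\epsilon}}(\fHh_{g, \vk}^{\bs{\epsilon}}) = \overline{\mf{h}_{\bs{\epsilon}}(\fH_{g, \vk}^{\bs{\epsilon}})} = \overline{\cH_{g, \vk}^{\bs{\epsilon}}}$; as $\cHh_{g, \vk}$ is closed, the whole statement reduces to the single inclusion $\cH_{g, \vk}^{\bs{\epsilon}} \subseteq \cHh_{g, \vk}$, i.e. each smooth stratum of fixed profile lies in the closure of the simply branched locus inside $\bP\fZ_{g, \vk}$.

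The crux, and the step I expect to be the main obstacle, is this last inclusion. My plan is to realize a smooth cover of profile $\bs{\epsilon}$ as the Hurwitz projection of a \emph{boundary} point of the simply branched stack $\fHh_{g, \vk}$. Given a smooth $\bs{\epsilon}$-cover $\phi_0 : X_0 \to \bP^1$, I would build a simply branched admissible cover $\tilde{\phi} : \tilde{X} \to \tilde{Y}$ whose target $\tilde{Y}$ is $\bP^1$ with a rational tail attached over each branch point $b_j$ of $\phi_0$, the tail carrying $\sum_{\ell_j}(\epsilon_j(\ell_j)-1)$ simple branch points arranged so that the monodromy around the attaching node has cycle type $\epsilon_j$ while the profile over $\infty$ stays $\vk$; such a cover exists by the Tame Assumption \ref{tam}, which guarantees that the relevant covers are separable and their Hurwitz data realizable. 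Contracting $\tilde{Y}$ to $\bP^1$ by a rigidification and stabilizing merges the simple branch points on each tail back into a single branch point of profile $\epsilon_j$, and the genus $0$ pieces of $\tilde{X}$ lying over the contracted tails are absorbed in the stabilization. I would then check that the resulting stable map is exactly $\phi_0$, so that $\mf{h}(\tilde{\phi})$ is the point of $\cH_{g, \vk}^{\bs{\epsilon}}$ determined by $\phi_0$; since $\tilde{\phi}$ lies in $\fHh_{g, \vk}$, this places that point in $\mf{h}(\fHh_{g, \vk}) = \cHh_{g, \vk}$.

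The real difficulty is the bookkeeping in the contraction: verifying that collapsing the tail covers and stabilizing returns a smooth domain carrying precisely the branch behaviour of $\phi_0$, and that this holds uniformly in families rather than only on geometric points. An alternative I would keep in reserve, avoiding admissible covers altogether, is a deformation argument: splitting the branch points of $\phi_0$ into simple ones yields, by unobstructedness of tame covers (again by \ref{tam}), a family of smooth simply branched covers degenerating to $\phi_0$, whose induced family of polar curves exhibits the class of $\phi_0$ as a limit of points of $\cH_{g, \vk}$ in the proper stack $\bP\fZ_{g, \vk}$. Either route gives $\cH_{g, \vk}^{\bs{\epsilon}} \subseteq \cHh_{g, \vk}$, and combined with the reduction above this completes the proof that $\cHh_{g, \vk} = \cHh_{g, \vk}^\bullet$; the assertion that $\mf{h}$ is proper and onto having already been settled in the first paragraph.
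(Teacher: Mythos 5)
Your proposal is correct and rests on the same key construction as the paper's own proof: realizing a cover with a non-simple branch point as the Hurwitz projection of a simply branched admissible cover by gluing a rational tail to the target over that point and matching rational tails to the domain carrying totally ramified, simply branched covers of the prescribed degrees $\epsilon_j(\ell_j)$. The only organizational difference is that you first reduce to \emph{smooth} covers of profile $\bs{\epsilon}$ by a closure argument before attaching tails, whereas the paper applies the tail construction directly to an arbitrary point of $\fHh_{g, \vk}^{\bs{\epsilon}}$ and inducts on the number of non-simple branch points; both routes work, and the family-level verification you flag as a worry is not needed, since the asserted equality of reduced closed substacks is already determined on geometric points.
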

\begin{proof}
  Let $\phi : (X, \underline{p}, \{p_{j,\ell_j}\}) \rightarrow (Y, q_\infty, q_j) \in \fHh_{g, \vk}^{\bs{\epsilon}}$ be a point in $\fHh_{g,\vk}^\bullet$ whose image by $\mf{h}_\bullet$ is a point $F \in \cHh_{g, \vk}^\bullet$. Let $q_j$ be a non-simple branch point of $\phi$. We glue a projective line $R_{q_j}$ to $Y$ along $0$ and $q_j$ and projective lines $R_{p_{j, \ell_j}}$ to $X$ along $0$ and $p_{j, \ell_j}$ for fixed $j$. For each $\ell_j$ choose a ramified cover $R_{p_j, \ell_j} \rightarrow R_{q_j}$ simply branched away from $p_{j, \ell_j}$ where it has ramification index $\epsilon_j(\ell_j)$. We get this way a cover from $X\sqcup_{p_{j, \ell_j}} R_{p_{j, \ell_j}}$ on $Y\sqcup_{q_j}R_{q_j}$ whose image by $\mf{h}_\bullet$ is clearly $F$ and whose number of non-simple branched points is less than the one of $F$. By induction we get the desired simply ramified cover.
\end{proof}

\appendix

\section{Compatibility of branch divisors}

\begin{theorem}
  \label{prop:Sec}
  Let $\phi : \mc{X} \rightarrow \mc{Y}$ an admissible cover in $\fHh_{g, \vk}^\bullet$ over $S$ and $F : \overline{\mc{X}} \rightarrow \overline{\mc{Y}}$ an $S$-map from a prestable $S$-curve on a smooth genus $0$ $S$-curve. If we have a commutative diagram  
  \begin{equation}  
    \label{diag:depart}
    \begin{tikzpicture}[baseline=(current  bounding  box.center),>=latex] 
      \matrix (m) 
      [matrix of math nodes, row sep=3.5em, column sep=4em, text
      height=1.5ex, text depth=0.25ex]  
      {  \mc{X} &  \mc{Y} \\  
        \overline{\mc{X}} & \overline{\mc{Y}} \\};  
      \path[->,font=\scriptsize] 
      (m-1-1) edge node[auto] {$\phi$} (m-1-2) 
      edge node[left] {$\mf{c}_<$} (m-2-1)
      edge node[auto] {$\varphi$} (m-2-2)
      (m-1-2) edge node[auto] {$\mf{c}_>$} (m-2-2)         
      (m-2-1) edge node[below] {$F$}(m-2-2);
  \end{tikzpicture}
\end{equation}
for contractions $\mf{c}_{>}$ and $\mf{c}_{<}$ in the sense of \cite{Knudsen2} then $\Br(F) = \mf{c}_{>*}\Br(\phi)$
\end{theorem}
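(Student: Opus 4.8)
The plan is to establish the identity by a degree count that circumvents any direct computation of the branch complex along the collapsed loci. Both $\Br(F)$ and $\mf{c}_{>*}\Br(\phi)$ are effective relative Cartier divisors on $\overline{\mc{Y}}$, a smooth genus $0$ $S$-curve whose geometric fibres are copies of $\bP^1$. Since the Fantechi--Pandharipande branch divisor commutes with base change (\cite{BranchFP}) and pushforward along $\mf{c}_>$ plainly does too, I would first reduce to the case where $S$ is the spectrum of an algebraically closed field; under the Tame Assumption \ref{tam} the relevant stacks are reduced, so checking the equality on geometric fibres is enough. On such a fibre the two divisors have the \emph{same degree}: Riemann--Hurwitz gives $\deg\Br(F)=\deg\omega_{\overline{\mc{X}}}-d\deg\omega_{\overline{\mc{Y}}}=2g-2+2d$, as $\overline{\mc{X}}$ has arithmetic genus $g$ and $F$ has degree $d$ onto a genus $0$ curve, while $\deg\mf{c}_{>*}\Br(\phi)=\deg\Br(\phi)=2g-2+2d$ because $\mf{c}_>$ preserves degrees and $\phi$ is a degree $d$ cover of a genus $0$ curve.

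I would then compare the two divisors away from a finite set. Write $\varphi=\mf{c}_>\circ\phi=F\circ\mf{c}_<$ and let $B\subset\overline{\mc{Y}}$ be the finite set consisting of the $\mf{c}_>$-images of the components of $\mc{Y}$ contracted by $\mf{c}_>$ together with the $\varphi$-images of the components of $\mc{X}$ contracted by $\mf{c}_<$. Over $\overline{\mc{Y}}\setminus B$ no contraction collapses anything: $\mf{c}_>$ restricts to an isomorphism from the corresponding open of $\mc{Y}$, and over that open $\mf{c}_<$ is an isomorphism as well, so $F$ restricts to a finite cover canonically identified with the restriction of $\phi$. Because $\mr{d}^\omega\phi$ is an isomorphism at the nodes of $\mc{X}$ (the defining local shape of an admissible cover) the nodes contribute nothing there, and on the locus where the cover is finite the Fantechi--Pandharipande divisor agrees with the classical branch divisor; hence $\Br(F)$ and $\mf{c}_{>*}\Br(\phi)$ have the same restriction to $\overline{\mc{Y}}\setminus B$.

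To finish I would use the elementary observation that two effective divisors of the same degree on $\bP^1$ that agree off a finite set must coincide: the total mass is prescribed and already matches away from $B$, so the residual mass supported at the points of $B$ must match as well. Reinstating the base $S$ via the first step --- using that a relative effective Cartier divisor on $\overline{\mc{Y}}$ of fixed relative degree over a reduced base is determined by its geometric fibres --- yields $\Br(F)=\mf{c}_{>*}\Br(\phi)$ as relative divisors over $S$, which is the claim.

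The delicate point is the comparison \emph{exactly at} the points of $B$, which the degree count is designed to sidestep; the one step needing care is that the identification of $F$ with $\phi$ over $\overline{\mc{Y}}\setminus B$ be compatible with the morphisms $\mr{d}^\omega F$ and $\mr{d}^\omega\phi$ cutting out the two branch complexes, so that the two applications of $\Div$ literally coincide there. The more laborious alternative would compute $\Ri\varphi_*[\varphi^*\omega_{\overline{\mc{Y}}}\to\omega_{\mc{X}}]$ through the octahedron attached to the factorisation $\varphi^*\omega_{\overline{\mc{Y}}}\to\phi^*\omega_{\mc{Y}}\to\omega_{\mc{X}}$ and show that the correction term $\Ri\mf{c}_{>*}\bigl(C_{\mf{c}_>}\otimes\phi_*\sO_{\mc{X}}\bigr)$, with $C_{\mf{c}_>}=[\mf{c}_>^*\omega_{\overline{\mc{Y}}}\to\omega_{\mc{Y}}]$, carries trivial $\Div$ by relative duality for the contraction $\mf{c}_>$; I would resort to this only if the fibrewise degree argument failed to control the multiplicities at $B$.
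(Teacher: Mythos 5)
Your reduction to geometric fibres, the global degree count, and the identification of $\Br(F)$ with $\mf{c}_{>*}\Br(\phi)$ over $\overline{\mc{Y}}\setminus B$ are all sound (and the observation that nodes of $\mc{Y}$ necessarily land in $B$, so that away from $B$ one is comparing honest finite covers of smooth curves, is correct). The gap is the final step. It is simply false that two effective divisors of the same degree on $\bP^1$ that agree off a finite set must coincide: if $B=\{b_1,b_2\}$, the divisors $2[b_1]+D'$ and $2[b_2]+D'$ agree off $B$ and have equal degree but are distinct. Your argument only shows that the \emph{total} mass the two divisors place on $B$ is the same; it says nothing about how that mass is distributed among the points of $B$. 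In the situation at hand $B$ generically has several points --- one for each tree of components of $\mc{Y}$ contracted by $\mf{c}_>$ (and each such tree may carry arbitrary ramification data) --- so the degree count cannot, even in principle, pin down the multiplicities. You flag this as ``the delicate point \ldots which the degree count is designed to sidestep,'' but the sidestep does not work: the comparison at each individual point of $B$ is precisely the content of the theorem, since away from $B$ the statement is essentially tautological.

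To close the gap you would either have to localize the degree count (compute, for each $b\in B$, the contribution of the contracted trees to $\Br(F)$ at $b$ via the Fantechi--Pandharipande decomposition $\Br(F)=2F_*\mf{n}+\sum_v\Br(F_v)$ and match it against the pushforward of the admissible branch divisor from $\mf{c}_>^{-1}(b)$), or carry out the ``laborious alternative'' you sketch at the end. That alternative is in fact the paper's proof: one shows $\Br(\varphi)=\Br(F)$ by pushing the branch complex down $\mf{c}_<$ (using $\Ri^{\ell}\mf{c}_{<*}\omega_{\mc{X}/S}=0$ and $\Ri^{\ell}\mf{c}_{<*}\varphi^*\omega_{\overline{\mc{Y}}/S}=0$ for $\ell>0$, the projection formula, and $\mf{c}_{<*}\omega_{\mc{X}/S}=\omega_{\overline{\mc{X}}/S}$), and separately $\Br(\varphi)=\mf{c}_{>*}\Br(\phi)$ via the distinguished triangle attached to $\varphi^*\omega_{\overline{\mc{Y}}/S}\rightarrow\phi^*\omega_{\mc{Y}/S}\rightarrow\omega_{\mc{X}/S}$, where the correction term $\Ri\mf{c}_{>*}\bigl(\phi_*\sO_{\mc{X}}\otimes[\mf{c}_>^*\omega_{\overline{\mc{Y}}/S}\rightarrow\omega_{\mc{Y}/S}]\bigr)$ has trivial $\Div$ because the branch divisor of a contraction of rational trees vanishes. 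As written, your proof does not establish the statement.
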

The section is dedicated to the proof of this theorem. We write down $\Br(\varphi)$ in terms of the data in the upper and lower triangles $\triangle_{up}$ and $\triangle_{lo}$. Comparing relations in both triangles gives the statement. We start with a general lemma. 
\begin{lemma}
  \label{lemcohann1}
  Let $\gamma : ( \pi : \mc{Z} \rightarrow S) \rightarrow (\eta : \mc{T} \rightarrow S)$ be a proper and onto $S$-mophism between prestable curves. Then for $\ell > 0$ we have $\Ri^\ell\gamma_*\omega_{\mc{Z}/S} = 0$.  
\end{lemma}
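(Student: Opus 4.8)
The plan is to reduce the vanishing of $\Ri^\ell\gamma_*\omega_{\mc{Z}/S}$ to a cohomological computation on the fibres of $\gamma$, and then to settle that computation by adjunction and Serre duality. First, since $\gamma$ is proper with fibres of dimension at most $1$, one has $\Ri^\ell\gamma_*\sF = 0$ for every coherent $\sF$ and every $\ell \geq 2$, so only $\ell = 1$ needs an argument. To treat it I would use the fibrewise criterion already invoked earlier in the paper (the theorem on formal functions, in the form of \cite[1.5]{Knudsen2}): because $\Ri^2\gamma_* = 0$, the functor $\Ri^1\gamma_*$ is right exact on coherent sheaves, so the $\mathfrak{m}$-adic induction in the formal functions computation goes through without any flatness of $\omega_{\mc{Z}/S}$ over $\mc{T}$, and $\Ri^1\gamma_*\omega_{\mc{Z}/S}$ vanishes as soon as $\Hi^1(\gamma^{-1}(y), \omega_{\mc{Z}/S}\mid_{\gamma^{-1}(y)}) = 0$ for every point $y$ of $\mc{T}$.

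Since the formation of $\omega_{\mc{Z}/S}$ commutes with base change along $\pi$, each such fibre cohomology may be computed over the geometric fibre of $S$ through which $y$ factors, where $\omega_{\mc{Z}/S}$ restricts to $\omega_{\mc{Z}_s}$. It therefore suffices to consider a proper surjective morphism $\gamma_s : \mc{Z}_s \to \mc{T}_s$ of prestable curves over a field and to show $\Hi^1(E, \omega_{\mc{Z}_s}\mid_E) = 0$ for $E = \gamma_s^{-1}(y)$. The decisive geometric point is that $E$ is a \emph{proper} subcurve of $\mc{Z}_s$: as $\gamma_s$ is surjective onto the one-dimensional $\mc{T}_s$ it cannot contract the connected curve $\mc{Z}_s$ to the single point $y$, so $E \neq \mc{Z}_s$ and every one-dimensional connected component of $E$ meets the closure of its complement.

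Now I would apply adjunction for the nodal curve $\mc{Z}_s$: writing $D_E$ for the nonempty divisor of nodes along which $E$ meets $\overline{\mc{Z}_s \setminus E}$, one has $\omega_{\mc{Z}_s}\mid_E \simeq \omega_E(D_E)$, where $\omega_E$ is the dualizing sheaf of the reduced Gorenstein curve $E$. Serre duality on $E$ then gives
\[
\Hi^1\big(E, \omega_E(D_E)\big) \simeq \Hi^0\big(E, \sO_E(-D_E)\big)^\vee,
\]
and the right-hand side vanishes, since a global section of $\sO_E(-D_E)$ is a function on the connected proper curve $E$ vanishing along the nonempty divisor $D_E$, hence zero. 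This establishes the fibrewise vanishing and therefore $\Ri^1\gamma_*\omega_{\mc{Z}/S} = 0$.

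The step I expect to require the most care is precisely the passage from fibrewise vanishing to the vanishing of $\Ri^1\gamma_*$, because $\omega_{\mc{Z}/S}$ is \emph{not} flat over $\mc{T}$ (the fibre dimension of $\gamma$ jumps along the contracted locus, so the usual cohomology-and-base-change hypotheses fail). The clean way around this is the remark above that $\Ri^1\gamma_*$ is the top nonvanishing derived pushforward, so the relevant data are right exact and the computation collapses onto the reduced fibre; alternatively, for the contractions to which the lemma is applied the fibres are reduced trees of rational curves by \cite{Knudsen2}, so the scheme-theoretic fibre coincides with $E$ and the criterion applies verbatim. A second, equally serviceable route would replace this analysis by relative Grothendieck duality for $\gamma$, which yields $R\gamma_*\omega_{\mc{Z}/S} \simeq R\Hhom_{\mc{T}}(R\gamma_*\sO_{\mc{Z}}, \omega_{\mc{T}/S})$ and reduces the claim to the concentration of $R\gamma_*\sO_{\mc{Z}}$; but the fibrewise argument is more in keeping with the computation already carried out for $\Ri^1\rho_*\sO_{\mc{X}}$. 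The remaining ingredients, namely the adjunction isomorphism and Serre duality on the Gorenstein curve $E$, are standard.
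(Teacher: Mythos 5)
Your proof is correct and follows essentially the same route as the paper's: reduce to $\ell=1$ by the fibre-dimension bound, apply Knudsen's fibrewise criterion, note that surjectivity forces each one-dimensional fibre to be a proper subcurve $E \subsetneq \mc{Z}_s$, identify $\omega_{\mc{Z}_s}\mid_E$ with $\omega_E(D_E)$ for the nonempty divisor of attaching nodes, and conclude by Serre duality. Your added care about the failure of flatness of $\omega_{\mc{Z}/S}$ over $\mc{T}$ and the right-exactness of the top derived pushforward is a welcome precision that the paper leaves implicit, but it does not alter the argument.
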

\begin{proof}
  Fibers of $\gamma$ have dimension $\leq 1$. By lemma \cite[1.4]{Knudsen2} $\Ri^{\ell}\gamma_*=0$ for $\ell > 1$. By the same lemma it is enough to show that for every closed point $t \in \mc{T}$ we have 
\begin{displaymath}
\Hi^1(\gamma^{-1}(t), \omega_{\mc{Z}/S \mid \gamma^{-1}(t)}) = 0
\end{displaymath}
to have $\Ri^1\gamma_*\omega_{\mc{Z}/S} = 0$. The fiber $\gamma^{-1}(t)$ is a sub-curve of $\mc{Z}_s$ for $s = \eta(t)$. Since $\gamma$ is onto $\gamma^{-1}(t) \neq \mc{Z}_s$. We are brought to show that given a sub-curve $X \subsetneq Z$ in a prestable curve $\Hi^1(\omega_{Z\mid X}) =  0$. We can assume $X \subsetneq Z$ is connected. Write $\mf{n}_{Z\mid X}$ the set of nodal points in $X\cap \overline{Z \setminus X}$. A point in $\mf{n}_{Z\mid X}$ is regular on $X$ and $\mf{n}_{Z\mid X} \neq \emptyset$. We have that $\omega_{Z \mid X} = \omega_{X}(\sum_{x \in \mf{n}_{Z\mid X}}x)$. Thus $\Hi^1(\omega_{Z \mid X}) = \Hi^1(\omega_{X}(\sum_{x \in  \mf{n}_{Z \mid X}}x))$ and the latter is zero by duality.
\end{proof}
\subsection*{Relations in $\triangle_{lo}$}
We have an isomorphism of complexes 
\begin{equation}
  \label{lem:triangleinf}
  \Ri\varphi_*[\varphi^*\omega_{\overline{\mc{Y}}/S} \rightarrow \omega_{\mc{X}/S}] \simeq \Ri F_*[F^*\omega_{\overline{\mc{Y}}/S} \rightarrow \omega_{\overline{\mc{X}}/S}] 
\end{equation}
In particular, we get the equality $\Br(\varphi) = \Br(F)$.
\begin{proof}
  By definition $\Br(\varphi) = \Div{\Ri \varphi_*[\varphi^*\omega_{\overline{\mc{Y}}/S} \rightarrow \omega_{\mc{X}/S}]}$ and thus 
  \begin{eqnarray}
    \label{eqnBrFP}
    \Br(\varphi) =\Div{\Ri F_*\circ \Ri\mf{c}_{<*}[\mf{c}_<^*F^*\omega_{\overline{\mc{Y}}/S} \rightarrow \omega_{\mc{X}/S}]}.  
  \end{eqnarray}
  We simplify the complex appearing on the right-hand side. Notice that $\mf{c}_{<}$ satisfies the assumptions of lemma \ref{lemcohann1} and thus $\Ri^\ell\mf{c}_{<*}\omega_{\mc{X}/S} = 0$ for every $\ell > 0$. We claim this is the case as well for $\Ri^\ell\mf{c}_{<*}\varphi^*\omega_{\overline{\mc{Y}}/S}$. Take a point $\bar{x} \in \overline{\mc{X}}$ over $s \in S$. Let $t = F(\bar{x}) \in \overline{\mc{Y}}$. We have that 
\begin{displaymath}
\varphi^*\omega_{\overline{\mc{Y}}/S \mid\mf{c}_<^{-1}(\bar{x})} = (\varphi^*\omega_{\overline{\mc{Y}}/S \mid \varphi^{-1}(t)})_{\mid\mf{c}_<^{-1}(\bar{x})}.
\end{displaymath}
By base change we get that 
\begin{displaymath}
\varphi^*\omega_{\overline{\mc{Y}}/S \mid \varphi^{-1}(t)} = \varphi_t^*\omega_{t} = \sO_{\varphi^{-1}(t)}
\end{displaymath}
and thus 
\begin{displaymath}
(\varphi^*\omega_{\overline{\mc{Y}}/S \mid \varphi^{-1}(t)})_{\mid \mf{c}_<^{-1}(\bar{x})} = \sO_{\mf{c}_<^{-1}(\bar{x})}.
\end{displaymath}
Finally using Serre duality we get that 
\begin{displaymath}
  \Hi^1(\mf{c}_>^{-1}(\bar{x}), \varphi^*\omega_{\overline{\mc{Y}}/S \mid \mf{c}_>^{-1}(\bar{x})}) = \Hi^0(\mf{c}_<^{-1}(\bar{x}), \omega_{\mf{c}_<^{-1}(\bar{x})})^{\vee}.   
\end{displaymath}
This last cohomology group is zero: the dual graph of the genus $0$ curve $\mf{c}_{<}^{-1}(\bar{x})$ is a tree. We prove our claim by induction on the number of irreducible components of $\mf{c}_{<}^{-1}(\bar{x})$. If $\mf{c}_{<}^{-1}(\bar{x})$ is smooth $\omega_{\mf{c}^{-1}(\bar{x})}$ has negative degree. Otherwise each leaf of $\mf{c}_{<}^{-1}(\bar{x})$ meets the rest of the curve in a unique nodal point. The restriction of $\omega_{\mf{c}_{<}^{-1}(\bar{x})}$ on each leaf has negative degree and any global section is zero on the leafs. We can reduce the number of components this way. By lemma \cite[1.4]{Knudsen2} $\Ri^\ell\mf{c}_{<*}\varphi^*\omega_{\overline{\mc{Y}}/S}=0$ for positive $\ell$. Now we have   
  \begin{displaymath}
    \Ri\mf{c}_{<*}[\mf{c}_<^*F^*\omega_{\overline{\mc{Y}}/S} \rightarrow \omega_{\mc{X}/S}] = [\mf{c}_{<*}\mf{c}_<^*F^*\omega_{\overline{\mc{Y}}/S} \rightarrow \mf{c}_{<*}\omega_{\mc{X}/S}].    
  \end{displaymath}
  Since $\mf{c}_<$ has connected fibers $\mf{c}_{<*}\sO_{\mc{X}} = \sO_{\overline{\mc{X}}}$ and the projection formula gives 
\begin{displaymath}
\mf{c}_{<*}\mf{c}_<^*F^*\omega_{\overline{\mc{Y}}/S} =
  F^*\omega_{\overline{\mc{Y}}/S}. 
\end{displaymath}
By \cite[3.11]{BehManin} or equivalently \cite[1.6]{Knudsen2} we have that $\mf{c}_{< *}\omega_{\mc{X}/S} = \omega_{\overline{\mc{X}}/S}$. Finally
  \begin{displaymath}
    \Ri\mf{c}_{<*}[\mf{c}_<^*F^*\omega_{\overline{\mc{Y}}/S} \rightarrow \omega_{\mc{X}/S}] = [F^*\omega_{\overline{\mc{Y}}/S} \rightarrow \omega_{\mc{X}/S}].  
  \end{displaymath}
  and the desired relation is obtained by comparing the previous equality with \ref{eqnBrFP}. 
\end{proof}

\subsection*{Relations in $\triangle_{up}$} The ramification divisor of $\phi$ is the $\Div$ of the complex in degrees $-1$ and $0$ given by $[\phi^*\omega_{\mc{Y}/S} \rightarrow \omega_{\mc{X}/S}]$. If $\mf{R}_\phi$ denotes the ramification divisor of $\phi$ and $\mf{B}_\phi$ its branch divisor we have that $\phi_*\sO_{\mf{R}_\phi} = \sO_{\mf{B}_\phi}$. In the case of an admissible such as $\phi$ we can extend the classical relation $\omega_{\mc{X}/S} =\sO(\mf{R}_\phi) \otimes \phi^*\omega_{\mc{Y}/S}$. For more details on the ramification divisor of an admissible cover see \cite{HurBR}. In $\triangle_{up}$ we have the relation 
\begin{equation}
  \label{lem:trianglesup}
  \Br(\varphi) = \Div{\mf{c}_{>*}\sO_{\mf{B}_\phi}}.
\end{equation}
In particular explicitly writing down the right-hand side gives $\Br(\varphi) = \mf{c}_{>*}\Div\sO_{\mf{B}_\phi}$. 
\begin{proof}
  It is rather straightforward to see the set of morphisms between vertical complexes  
  \begin{displaymath}
    \begin{tikzpicture}[baseline=(current  bounding  box.center),>=latex] 
      \matrix (m) 
      [matrix of math nodes, row sep=3em, column sep=2.5em, text
      height=1.5ex, text depth=0.25ex]  
      { \phi^*\mf{c}_>^*\omega_{\overline{\mc{Y}}/S}  &
        \varphi^*\omega_{\overline{\mc{Y}}/S} & \phi^*\omega_{\mc{Y}/S} \\   
        \phi^*\omega_{\mc{Y}/S}] & \omega_{\mc{X}/S} & \omega_{\mc{X}/S} \\};  
      \path[->,font=\scriptsize] 
      (m-1-1) edge node[left] {$\phi^*\mr{d}^\omega\mf{c}_>$} (m-2-1)
      (m-1-2) edge node[auto] {$\mr{d}^\omega\varphi$} (m-2-2)
              edge node[auto] {$\phi^*\mr{d}^\omega\mf{c}_>$} (m-1-3)
      (m-2-1) edge node[below] {$\mr{d}^\omega\phi$} (m-2-2)
      (m-1-3) edge node[auto] {$\mr{d}^\omega\phi$} (m-2-3);
      \draw[double, double distance=1.5 pt, font=\scriptsize]
      (m-2-2) -- (m-2-3)
      (m-1-1) -- (m-1-2);      
   \end{tikzpicture}
 \end{displaymath}
defines a distinguished triangle in $D^b(\mc{X})$. By lemma \cite[2]{BranchFP} we get that 
  \begin{displaymath}
    \Br(\varphi)   = \Div\underbrace{\Ri\varphi_*[\varphi^*\omega_{\overline{\mc{Y}}/S} \rightarrow \phi^*\omega_{\mc{Y}/S}]}_{(\star)} + \Div\underbrace{\Ri\varphi_*[\phi^*\omega_{\mc{Y}/S} \rightarrow \omega_{\mc{X}/S}]}_{(\star\star)} 
  \end{displaymath}
  Now 
  \begin{displaymath}
    (\star)  = \Ri\mf{c}_{>*}[\phi_*\phi^*\mf{c}_>^*\omega_{\overline{\mc{Y}}/S} \rightarrow \phi_*\phi^*\omega_{\mc{Y}/S}] = \Ri\mf{c}_{>*}[\phi_*\sO_{\mc{X}}\otimes \mf{c}_>^*\omega_{\overline{\mc{Y}}/S} \rightarrow \phi_*\sO_{\mc{X}}\otimes \omega_{\mc{Y}/S}]
  \end{displaymath}
  Since $\phi_*\sO_{\mc{X}}$ is locally free we get that $(\star)  = \Ri\mf{c}_{>*}(\phi_*\sO_{\mc{X}}\otimes [\mf{c}_>^*\omega_{\overline{\mc{Y}}/S} \rightarrow \omega_{\mc{Y}/S}])$. By proposition \cite[9]{Knudsen1} we have $\Div(\star) = \Div\Ri\mf{c}_{>*}[\mf{c}_>^*\omega_{\overline{\mc{Y}}/S} \rightarrow \omega_{\mc{Y}/S}]$. This is the branch divisor of $\mf{c}_>$. Restricted to the fibers it contracts trees of smooth rational components on the component marked by infinity. Following \cite[3 (22)]{BranchFP} one can show this divisor is zero. Getting back to $(\star\star)$ we have that
\begin{displaymath}
  (\star\star) = \Ri\varphi_*[\omega_{\mc{X}/S}\otimes \sO_{\mc{X}}(-\mf{R}_\phi) \rightarrow \omega_{\mc{X}/S}] = \Ri\varphi_*(\omega_{\mc{X}/S}\otimes [\sO_{\mc{X}}(-\mf{R}_\phi) \rightarrow \sO_{\mc{X}}]) = \Ri\varphi_*(\omega_{\mc{X}/S}\otimes \sO_{\mf{R}_\phi}). 
\end{displaymath}
Following \cite[9]{Knudsen1} we get that 
\begin{displaymath}
\Div(\star\star) = \Div\Ri\varphi_*\sO_{\mf{R}_\phi} = \Div \Ri\mf{c}_{>*}\sO_{\mf{B}_\phi}.
\end{displaymath}
Since $\sO_{\mf{B}_\phi}$ is supported along a scheme of relative dimension $0$ the higher derived functors $\Ri^\ell\mf{c}_{>*}\sO_{\mf{B}_\phi}$ are zero for $\ell > 0$. The desired divisor is thus equal to $\Div\mf{c}_{>*}\sO_{\mf{B}_\phi}$. 
\end{proof}

\end{document}